\newcommand{\Mdef}[2]{\newcommand{#1}{\relax \ifmmode #2 \else $#2$\fi}}
\newcommand{\sm }{\wedge}
\newcommand{\tensor}{\otimes}
\newcommand{\map}{\mathrm{map}}
\newcommand{\Hom}{\mathrm{Hom}}
\newcommand{\Ext}{\mathrm{Ext}}
\Mdef{\bhom}{\mathbf{\hat{H}om}}
\Mdef{\Mod}{\mathrm{mod}}
\newtheorem{thm}{Theorem}[section]
\newtheorem{lemma}[thm]{Lemma}
\newtheorem{prop}[thm]{Proposition}
\newtheorem{cor}[thm]{Corollary}
\theoremstyle{definition}
\newtheorem{defn}[thm]{Definition}
\newtheorem{summary}[thm]{Summary}
\newtheorem{example}[thm]{Example}
\newtheorem{remark}[thm]{Remark}
\newtheorem{convention}[thm]{Convention}
\newcommand{\qqed}{\qed \\[1ex]}
\renewenvironment{proof}[1][\hspace*{-.8ex}]{\noindent {\bf Proof #1:\;}}{\qqed}
\Mdef{\PH} {\Phi^H}
\Mdef{\PK} {\Phi^K}
\Mdef{\PL} {\Phi^L}
\Mdef{\PT} {\Phi^{\T}}
\Mdef{\ef}{E{\cF}_+}
\Mdef{\etf}{\widetilde{E}{\cF}}
\Mdef{\eg}{E{G}_+}
\Mdef{\etg}{\tilde{E}{G}}
\Mdef{\infl}{\mathrm{inf}}
\Mdef{\defl}{\mathrm{def}}
\Mdef{\res}{\mathrm{res}}
\Mdef{\ind}{\mathrm{ind}}
\Mdef{\coind}{\mathrm{coind}}
\Mdef{\univ}{\mathcal{U}}
\Mdef{\Fp}{\mathbb{F}_p}
\Mdef{\Zpinfty}{\Z /p^{\infty}}
\Mdef{\Zpadic}{\Z_p^{\wedge}}
\newcommand{\bi}{\begin{itemize}}
\newcommand{\be}{\begin{enumerate}}
\newcommand{\bc}{\begin{center}}
\newcommand{\bd}{\begin{description}}
\newcommand{\ei}{\end{itemize}}
\newcommand{\ee}{\end{enumerate}}
\newcommand{\ec}{\end{center}}
\newcommand{\ed}{\end{description}}
\newcommand{\adjunction}[4]{
\diagram
#1:#2 \rrto<0.7ex> &&
#3  \llto<0.7ex> :#4 
\enddiagram}
\newcommand{\lra}{\longrightarrow}
\newcommand{\lla}{\longleftarrow}
\newcommand{\iso}{\cong}
\Mdef{\we}{\mathbf{we}}
\Mdef{\fib}{\mathbf{fib}}
\Mdef{\cof}{\mathbf{cof}}
\Mdef{\BI}{\mathcal{BI}}
\newcommand{\Qeq}{\simeq_Q}
\newcommand{\fibre}{\mathrm{fibre}}
\newcommand{\ilim}{\mathop{ \mathop{\mathrm{lim}} \limits_\leftarrow} \nolimits}
\newcommand{\holim}{\mathop{ \mathop{\mathrm {holim}} \limits_\leftarrow} \nolimits}
\Mdef{\A}{\mathbb{A}}
\Mdef{\B}{\mathbb{B}}
\Mdef{\C}{\mathbb{C}}
\Mdef{\D}{\mathbb{D}}
\Mdef{\E}{\mathbb{E}}
\Mdef{\T}{\mathbb{T}}
\Mdef{\F}{\mathbb{F}}
\Mdef{\G}{\mathbb{G}}
\Mdef{\I}{\mathbb{I}}
\Mdef{\N}{\mathbb{N}}
\Mdef{\Q}{\mathbb{Q}}
\Mdef{\R}{\mathbb{R}}
\Mdef{\bbS}{\mathbb{S}}
\Mdef{\Z}{\mathbb{Z}}
\Mdef{\bA}{\mathbb{A}}
\Mdef{\bB}{\mathbb{B}}
\Mdef{\bC}{\mathbb{C}}
\Mdef{\bD}{\mathbb{D}}
\Mdef{\bE}{\mathbb{E}}
\Mdef{\bF}{\mathbb{F}}
\Mdef{\bG}{\mathbb{G}}
\Mdef{\bH}{\mathbb{H}}
\Mdef{\bI}{\mathbb{I}}
\Mdef{\bJ}{\mathbb{J}}
\Mdef{\bK}{\mathbb{K}}
\Mdef{\bL}{\mathbb{L}}
\Mdef{\bM}{\mathbb{M}}
\Mdef{\bN}{\mathbb{N}}
\Mdef{\bO}{\mathbb{O}}
\Mdef{\bP}{\mathbb{P}}
\Mdef{\bQ}{\mathbb{Q}}
\Mdef{\bR}{\mathbb{R}}
\Mdef{\bS}{\mathbb{S}}
\Mdef{\bT}{\mathbb{T}}
\Mdef{\bU}{\mathbb{U}}
\Mdef{\bV}{\mathbb{V}}
\Mdef{\bW}{\mathbb{W}}
\Mdef{\bX}{\mathbb{X}}
\Mdef{\bY}{\mathbb{Y}}
\Mdef{\bZ}{\mathbb{Z}}
\Mdef{\cA}{\mathcal{A}}
\Mdef{\cB}{\mathcal{B}}
\Mdef{\cC}{\mathcal{C}}
\Mdef{\mcD}{\mathcal{D}} 
\Mdef{\cE}{\mathcal{E}}
\Mdef{\cF}{\mathcal{F}}
\Mdef{\cG}{\mathcal{G}}
\Mdef{\mcH}{\mathcal{H}} 
\Mdef{\cI}{\mathcal{I}}
\Mdef{\cJ}{\mathcal{J}}
\Mdef{\cK}{\mathcal{K}}
\Mdef{\mcL}{\mathcal{L}}
\Mdef{\cM}{\mathcal{M}}
\Mdef{\cN}{\mathcal{N}}
\Mdef{\cO}{\mathcal{O}}
\Mdef{\cP}{\mathcal{P}}
\Mdef{\cQ}{\mathcal{Q}}
\Mdef{\mcR}{\mathcal{R}}
\Mdef{\cS}{\mathcal{S}}
\Mdef{\cT}{\mathcal{T}}
\Mdef{\cU}{\mathcal{U}}
\Mdef{\cV}{\mathcal{V}}
\Mdef{\cW}{\mathcal{W}}
\Mdef{\cX}{\mathcal{X}}
\Mdef{\cY}{\mathcal{Y}}
\Mdef{\cZ}{\mathcal{Z}}
\Mdef{\Bt}{\tilde{B}}
\Mdef{\Ct}{\tilde{C}}
\Mdef{\Et}{\tilde{E}}
\Mdef{\Ht}{\tilde{H}}
\Mdef{\Kt}{\tilde{K}}
\Mdef{\Lt}{\tilde{L}}
\Mdef{\Mt}{\tilde{M}}
\Mdef{\Nt}{\tilde{N}}
\Mdef{\Pt}{\tilde{P}}
\Mdef{\tA}{\tilde{A}}
\Mdef{\tB}{\tilde{B}}
\Mdef{\tC}{\tilde{C}}
\Mdef{\tE}{\tilde{E}}
\Mdef{\tH}{\tilde{H}}
\Mdef{\tK}{\tilde{K}}
\Mdef{\tL}{\tilde{L}}
\Mdef{\tM}{\tilde{M}}
\Mdef{\tN}{\tilde{N}}
\Mdef{\tP}{\tilde{P}}
\Mdef{\ft}{\tilde{f}}
\Mdef{\xt}{\tilde{x}}
\Mdef{\yt}{\tilde{y}}
\Mdef{\Ab}{\overline{A}}
\Mdef{\Bb}{\overline{B}}
\Mdef{\Cb}{\overline{C}}
\Mdef{\Db}{\overline{D}}
\Mdef{\Eb}{\overline{E}}
\Mdef{\Fb}{\overline{F}}
\Mdef{\Gb}{\overline{G}}
\Mdef{\Hb}{\overline{H}}
\Mdef{\Ib}{\overline{I}}
\Mdef{\Jb}{\overline{J}}
\Mdef{\Kb}{\overline{K}}
\Mdef{\Lb}{\overline{L}}
\Mdef{\Mb}{\overline{M}}
\Mdef{\Nb}{\overline{N}}
\Mdef{\Ob}{\overline{O}}
\Mdef{\Pb}{\overline{P}}
\Mdef{\Qb}{\overline{Q}}
\Mdef{\Rb}{\overline{R}}
\Mdef{\Sb}{\overline{S}}
\Mdef{\Tb}{\overline{T}}
\Mdef{\Ub}{\overline{U}}
\Mdef{\Vb}{\overline{V}}
\Mdef{\Wb}{\overline{W}}
\Mdef{\Xb}{\overline{X}}
\Mdef{\Yb}{\overline{Y}}
\Mdef{\Zb}{\overline{Z}}
\Mdef{\db}{\overline{d}}
\Mdef{\hb}{\overline{h}}
\Mdef{\qb}{\overline{q}}
\Mdef{\rb}{\overline{r}}
\Mdef{\tb}{\overline{t}}
\Mdef{\ub}{\overline{u}}
\Mdef{\vb}{\overline{v}}
\Mdef{\hc}{\hat{c}}
\Mdef{\he}{\hat{e}}
\Mdef{\hf}{\hat{f}}
\Mdef{\hA}{\hat{A}}
\Mdef{\hH}{\hat{H}}
\Mdef{\hJ}{\hat{J}}
\Mdef{\hM}{\hat{M}}
\Mdef{\hP}{\hat{P}}
\Mdef{\hQ}{\hat{Q}}
\Mdef{\thetab}{\overline{\theta}}
\Mdef{\phib}{\overline{\phi}}
\Mdef{\uA}{\underline{A}}
\Mdef{\uB}{\underline{B}}
\Mdef{\uC}{\underline{C}}
\Mdef{\uD}{\underline{D}}
\Mdef{\bolda}{\mathbf{a}}
\Mdef{\boldb}{\mathbf{b}}
\Mdef{\boldD}{\mathbf{D}}
\Mdef{\fm}{\frak{m}}
\Mdef{\fp}{\frak{p}}
\Mdef{\eps}{\epsilon}
\newcommand{\Rt}{R_t}
\renewcommand{\Rp}{R'}
\newcommand{\kt}{k_t}
\newcommand{\ktop}{k_{top}}
\newcommand{\Rtopq}{R_{top}'}
\newcommand{\Rtopp}{R_{top}}
\newcommand{\RTOP}{R_{top}}
\newcommand{\Rtoppb}{\overline{R}_{top}}
\newcommand{\Ra}{R_a}
\newcommand{\Etop}{\cE_{top}}
\newcommand{\Etilde}{{\cE}'}
\newcommand{\Efunc}{E'}
\newcommand{\Ea}{\cE_a}
\newcommand{\cEa}{\cE_a}
\newcommand{\SGSS}{\mathcal{S}_{\Sigma}[G]}
\newcommand{\FHom}{\mathrm{F}}
\newcommand{\HomR}{\mathrm{Hom}_{R}}
\newcommand{\HomRp}{\mathrm{Hom}_{\Rp}}
\newcommand{\HomE}{\mathrm{Hom}_{\cE}}
\newcommand{\End}{\mbox{End}}
\newcommand{\FS}{F_{S_{G}}}
\newcommand{\tensorR}{\otimes_R}
\newcommand{\modE}{\mbox{mod-$\cE$}}
\newcommand{\modEtilde}{\mbox{mod-$\Etilde$}}
\newcommand{\modEtop}{\mbox{mod-$\Etop$}}
\newcommand{\AcellM}{\mbox{$A$-cell-$\cM$ }}
\newcommand{\kcellRmodp}{\mbox{$k$-cell-$R$-mod}_p}
\newcommand{\kcellHBGmodp}{\mbox{$\Q$-cell-$\HBG$-mod}_p}
\newcommand{\kcellRtmod}{\mbox{$\Q$-cell-$\Rt$-mod}}
\newcommand{\kcellRtopqmod}{\mbox{$k$-cell-$\Rtopq$-mod}}
\newcommand{\kcellRtopqGmod}{\mbox{$k$-cell-$\Rtopq(G)$-mod}}
\newcommand{\kcellRtopqHmod}{\mbox{$k$-cell-$\Rtopq(H)$-mod}}
\newcommand{\kcellRtoppmod}{\mbox{$k$-cell-$\Rtopp$-mod}}
\newcommand{\ktcellRtmodp}{\mbox{$k_t$-cell-$\Rt$-mod}_p}
\newcommand{\kpcellRmodp}{\mbox{$k'$-cell-$R$-mod}_p}
\newcommand{\kcellRmod}{\mbox{$k$-cell-$R$-mod}}
\newcommand{\kcellRpmod}{\mbox{$k$-cell-$\Rp$-mod}}
\newcommand{\kcellRmodi}{\mbox{$k$-cell-$R$-mod}_i}
\newcommand{\kcellHBGmodi}{\mbox{$\Q$-cell-$\HBG$-mod}_i}
\newcommand{\Rmod}{\mbox{$R$-mod}}
\newcommand{\Rtmod}{\mbox{$\Rt$-mod}}
\newcommand{\Ramod}{\mbox{$\Ra$-mod}}
\newcommand{\torsRmod}{\mbox{tors-$R$-mod}}
\newcommand{\torsRamod}{\mbox{tors-$\Ra$-mod}}
\newcommand{\Rpmod}{\mbox{$R'$-mod}}
\renewcommand{\bc}{\sigma}
\renewcommand{\BI}{\cB \cI}
\newcommand{\varrow}[1]{\hbox to #1{\rightarrowfill}}
\DeclareMathOperator{\Ho}{{Ho}}
\newcommand{\modu}{\mbox{-mod}}
\newcommand{\dg}{DG}
\renewcommand{\cM}{\bM}
\newcommand{\M}{\bM}
\renewcommand{\cN}{\bN}
\renewcommand{\N}{\bN}
\renewcommand{\mcD}{\bD}
\newcommand{\uk}{\underline{k}}
\newcommand{\cone}{\mathrm{Cone}}
\newcommand{\modcat}[1]{\mbox{$#1$-mod}}
\newcommand{\Gspec}{\mbox{$G$-spectra}}
\newcommand{\Hspec}{\mbox{$H$-spectra}}
\newcommand{\freeGspec}{\mbox{free-$G$-spectra}}
\newcommand{\freeHspec}{\mbox{free-$H$-spectra}}
\newcommand{\torsHBGmod}{\mbox{tors-$H^*(BG)$-mod}}
\newcommand{\torsHBHmod}{\mbox{tors-$H^*(BH)$-mod}}
\newcommand{\HBG}{H^*(BG)}
\renewcommand{\eg}{EG_+}
\newcommand{\piG}{\pi^G}
\newcommand{\bSG}{\bS [G]}
\begin{document}
\title{ An algebraic model for free rational $G$-spectra for connected
compact Lie groups $G$.} 
\author{J.~P.~C.~Greenlees}
\address{Department of Pure Mathematics, The Hicks Building, 
Sheffield S3 7RH. UK.}
\email{j.greenlees@sheffield.ac.uk}
\author{B.~Shipley}
\thanks{The first author was partially supported by  the EPSRC under grant EP/C52084X/1
and the second author by the National Science Foundation under Grant No. 0706877.}
\address{Department of Mathematics, Statistics and Computer Science, University of Illinois at
Chicago, 510 SEO m/c 249,
851 S. Morgan Street,
Chicago, IL, 60607-7045, USA}
\email{bshipley@math.uic.edu}

\date{}

\begin{abstract}
We show that the  category of free rational $G$-spectra for a
connected compact Lie group $G$ is Quillen equivalent to 
the category of torsion  differential graded modules over 
the polynomial ring $H^*(BG)$. The ingredients are
the enriched Morita equivalences  of \cite{ss2}, 
the functors of \cite{s-alg} making rational spectra 
algebraic and Koszul duality and thick subcategory arguments
based on the simplicity of the derived category of a polynomial ring.

\begin{center} 
\end{center}
\end{abstract}
\maketitle
\thanks{}
\tableofcontents

\section{Introduction.}
\label{sec:Intro}
For some time we have been working on the project of giving
a concrete algebraic model for various categories of rational 
equivariant spectra. There are a number of sources of
complexity: in the model theory, the structured spectra, the
equivariant topology, the categorical apparatus and the algebraic 
models. We have found it helpful, both in our thinking and in 
communicating our results, to  focus on the class of free spectra, 
essentially removing three sources of complication but still leaving
a result of some interest. 

The purpose of the present paper is to give a small, concrete
and calculable model for free rational $G$-spectra for a connected
compact Lie group $G$. The first attraction is that it is 
rather easy to describe both the homotopy category of free $G$-spectra
and also the algebraic model. The homotopy
category coincides with the category of rational cohomology 
theories on free $G$-spaces; better still, on free spaces an equivariant
cohomology theory is the same as one in the naive sense 
(i.e., a contravariant functor on the category of free $G$-spaces
 satisfying the Eilenberg-Steenrod axioms and the wedge axiom). 
As for the algebraic model, we consider a suitable model category 
structure on differential graded (DG) torsion modules over the 
polynomial ring $\HBG =H^*(BG;\Q)$.

\begin{thm}
\label{thm:culmination}
For any connected compact Lie group  $G$, there is a zig-zag of Quillen equivalences
$$\freeGspec /\Q \Qeq \torsHBGmod  $$
of model categories. In particular their derived  categories are 
equivalent
$$Ho(\freeGspec /\Q) \simeq D(\torsHBGmod ) $$
as triangulated categories. 
\end{thm}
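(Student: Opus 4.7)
The plan is to follow the three-stage path announced in the abstract: a Morita reduction to modules over an endomorphism ring spectrum, an algebraicization converting that ring spectrum to a DGA, and a Koszul duality step transferring from modules over a small exterior DGA to torsion modules over a polynomial ring. First, the homotopy category of free rational $G$-spectra is compactly generated by the single free cell $G_+ = \Sigma^\infty_+ (G/e)$, whose $G$-equivariant endomorphism ring spectrum is the rationalized spherical group ring $\bS[G]_\Q$. Invoking the enriched Morita theorem of \cite{ss2}, one obtains a Quillen equivalence
$$\freeGspec /\Q \Qeq \text{mod-}\bS[G]_\Q .$$

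Second, the functors of \cite{s-alg} replace any rational ring spectrum by a quasi-isomorphic DGA over $\Q$. Applied to $\bS[G]_\Q$, this produces a DGA whose homology is $H_*(G;\Q)$; for a connected compact Lie group this is an exterior algebra $\Lambda = \Lambda(x_1, \ldots, x_n)$ on odd-degree generators, and the chain DGA $C_*(G;\Q)$ is formal, so the resulting DGA is quasi-isomorphic to $\Lambda$ with zero differential. This yields a second Quillen equivalence between $\text{mod-}\bS[G]_\Q$ and the category of DG $\Lambda$-modules.

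Third, the exterior algebra $\Lambda = H_*(G;\Q)$ and the polynomial ring $S = H^*(BG;\Q) = \HBG$ are Koszul dual, with $\Ext^*_\Lambda(\Q,\Q) = S$ and $\Ext^*_S(\Q,\Q) = \Lambda$. Koszul duality sends the compact generator $\Lambda$ of the $\Lambda$-module category to the residue field $\Q$ on the $S$-side, and therefore identifies DG $\Lambda$-modules with the localizing subcategory of DG $S$-modules generated by $\Q$. This is where the simplicity of the derived category of a polynomial ring enters: because $S$ is a regular graded ring, its thick subcategories are classified, and one checks directly that the localizing subcategory generated by the residue field is exactly $\torsHBGmod$. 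Composing the three Quillen equivalences produces the statement of the theorem.

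I expect Step 3 to be the technical heart of the proof. Producing a derived equivalence via Koszul duality is classical, but the theorem demands a Quillen equivalence of model categories, so one must select compatible cofibrantly generated model structures on DG $\Lambda$-modules and on $\torsHBGmod$, verify that the Koszul functors are left and right Quillen, and use the thick-subcategory classification to confirm that the image of the derived functor is exactly the torsion subcategory. Lifting the matching of a compact generator with its Koszul dual from the triangulated to the model-categorical level is where I anticipate the most delicacy.
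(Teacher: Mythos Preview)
Your outline is close in spirit to the paper's, but the paper runs the middle two stages in the opposite order, and the difference matters. After the Morita step (which is the same), the paper performs Koszul duality \emph{in spectra}: with $k=\Q$ a bifibrant $\bS[G]_\Q$-module, it forms $R'_{top}=F_{\bS[G]}(\Q,\Q)$, proves via a Cartan diagonal argument that this is weakly equivalent to a \emph{commutative} ring spectrum $R_{top}$, and establishes $\modcat{\bS[G]_\Q}\simeq \kcellRtoppmod$ by a Morita argument together with the double-centralizer equivalence $\kappa:\bS[G]_\Q\stackrel{\simeq}\lra F_{R'_{top}}(\Q,\Q)$, proved by an Adams spectral sequence. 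Only then does the paper apply \cite{s-alg} to the commutative $R_{top}$, obtaining a commutative DGA $R_t$ with $H^*(R_t)=H^*(BG)$; intrinsic formality of a polynomial ring among commutative DGAs (pick cycle representatives for the generators) then gives $R_t\simeq H^*(BG)$ in one line. The identification of $k$-cellular $H^*(BG)$-modules with torsion modules is done directly via the adjunction $(i,\Gamma_{\fm})$, not via a thick-subcategory classification.

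Your order forces the formality step onto a harder object, and this is where your sketch has a gap. The ring spectrum $\bS[G]_\Q$ is not commutative, so \cite{s-alg} hands you only an associative DGA $A$ with $H_*(A)\cong\Lambda$. You then assert $A\simeq\Lambda$ because ``$C_*(G;\Q)$ is formal''. Two things are missing. First, you have not identified $A$ with $C_*(G;\Q)$ as an associative DGA; the functor $\Theta$ of \cite{s-alg} is a long composite and this compatibility is not automatic. Second, exterior algebras on odd generators are intrinsically formal among \emph{commutative} DGAs (the same cycle-representative trick as for polynomial rings, using $\tilde{x}_i^2=0$ from graded commutativity), but not obviously among associative ones: a chosen cycle $\tilde{x}$ has $\tilde{x}^2$ merely a boundary, and there is no free-algebra map $\Lambda\to A$ to exploit. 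One can rescue your route by arguing that $BG$ is rationally a product of even Eilenberg--MacLane spaces, so $G\simeq\Omega BG$ is rationally a product of odd spheres \emph{as a loop space}, whence $\bS[G]_\Q$ is equivalent as a ring spectrum to a smash product of formal square-zero extensions; but this is real additional input, and it is exactly what the paper sidesteps by doing Koszul first and algebraicizing a commutative ring.
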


The theorem has implications at several levels. First, it gives a
purely algebraic model for free rational $G$-spectra;  a property not 
enjoyed by all categories of spectra. Second, the algebraic model is
closely related to the standard methods of equivariant topology, and 
can be viewed as a vindication of Borel's insights. Third, the model
makes it routine to give calculations of maps between free
$G$-spectra, for example by using the Adams spectral sequence 
(Theorem \ref{thm:ASS}) below, and various structural 
questions (e.g., classification of thick and localizing subcategories)
 become accessible. Finally, the proof essentially proceeds by showing
 that certain categories are rigid, in the sense that in a suitable
 formal context, there is a unique  category whose objects
 modules with torsion homology over a commutative
 DGA $R$ with $H^*(R)$ polynomial. 

In Section \ref{sec:change} we also describe the counterparts of
functors relating free $G$-spectra and free $H$-spectra where $H$ is a
connected subgroup  of $G$. There is some interest to this, since if $i:H\lra G$
denotes the inclusion, the restriction functor $i^*$ from $G$-spectra
to $H$ spectra, has both a left adjoint $i_*$ (induction) and  a right
adjoint $i_!$  (coinduction). Similarly,  if $r:H^*(BG)\lra H^*(BH)$ is the
induced map in cohomology, the restriction $r^*$ from
$H^*(BH)$-modules $H^*(BG)$-modules has a left adjoint $r_*$  (extension
of scalars) and a right adjoint $r_!$ (coextension of scalars). In
fact the middle functor $i^*$ corresponds to the rightmost adjoint
$r_!$ of the functors we have mentioned. It follows that $i_*$ corresponds to
$r^*$,  but the right adjoint $i_!$ corresponds to a new functor; at
the  derived level $r_!$ is equivalent to a left adjoint $r_!'$, and
$r_!'$ has a right adjoint $r^!$. In the end, just as $i_!\simeq
\Sigma^{-c} i_*$, where $c=\dim (G/H)$, so too $r^!\simeq \Sigma^{-c} r^*$.

\begin{convention}
Certain conventions are in force throughout the paper. The most important 
is that {\em everything is rational}: henceforth all  spectra  and homology 
theories are rationalized without
comment.  For example, the category of rational $G$-spectra will now 
be denoted `$\Gspec$'.  The only exception to this rule is in the
beginning of Section~\ref{sec:AppB} where we introduce our model of 
(integral) free $G$-spectra.  We also use the standard conventions that
`\dg' abbreviates `differential graded' and that `subgroup' means 
`closed subgroup'. 
We focus on homological (lower) degrees, with differentials reducing degrees;
for clarity, cohomological (upper) degrees are called {\em codegrees} and 
are converted to degrees  by negation in the usual way.
Finally, we write $H^*(X)$ for the unreduced cohomology of a space $X$
with rational coefficients. 
\end{convention}

\section{Overview.}
\label{sec:overview}

\newcommand{\Homu}{\underline{\mbox{Hom}}}
\newcommand{\Etopp}{{\mathcal{E}}_{top}'}
\newcommand{\freeGspectra}{\mbox{free-$G$-spectra/$\Q$}} 
\newcommand{\QGmod}{\mbox{$\Q[G]$-mod}} 
\newcommand{\cellRtopmod}{\mbox{$\Q$-cell-$\Rtopq$-mod}} 
\newcommand{\cellRtoppmod}{\mbox{$\Q$-cell-$\Rtopp$-mod}} 
\newcommand{\cellRtmod}{\mbox{$\Q$-cell-$\Rt$-mod}} 
\newcommand{\cellHBGmod}{\mbox{$\Q$-cell-$H^*(BG)$-mod}}

\subsection{Strategic summary.}
In this section we describe our overall strategy. Our task is to obtain 
a Quillen equivalence between the category of rational free $G$-spectra and
the category of DG torsion $H^*(BG)$-modules. In joining these two 
categories we have 
three main boundaries to cross: first, we have to pass from the realm of 
apparently unstructured homotopy theory to a category of modules, secondly
we have to pass from a category of topological objects (spectra) to a
category of  algebraic objects (DG vector spaces), and finally we have 
to pass from modules over an arbitrary  ring to modules
over a commutative ring. These three steps could in principle be taken in 
any order, but we have found it convenient to begin by moving to modules
over a ring spectrum, then to modules over a commutative ring
spectrum and finally passing from modules over a commutative ring spectrum 
to modules over a commutative DGA. In the following paragraphs
we explain the strategy, and point to the sections of the paper where
the argument is given in detail. In Section~\ref{subsec:diagram} we then
give a condensed outline of the argument.  Various objects have counterparts in 
several of the worlds we pass through. As a notational cue we indicate
this with a subscript, so that $\RTOP$ is a topological object (a ring
spectrum), $\Rt$ is an algebraic object (a DGA, but very large and 
poorly understood), and $\Ra$ is an algebraic object (another DGA, 
small and concrete).

\subsection{Outline.}
In the homotopy category of free $G$-spectra, all spectra are constructed
from free cells $G_+$, and rational $G$-spectra from rationalized free 
cells $\Q [G]$. There are a number of models for free rational $G$-spectra, but
for definiteness we work with the usual model structure on orthogonal 
spectra \cite{mm}, 
and localize it so that isomorphisms of non-equivariant rational 
homotopy become weak equivalences (see Section \ref{sec:AppB} for a fuller 
discussion.) 

The first step is an application of Morita theory, as given 
in the context of spectra by Schwede and Shipley \cite{ss2}. Since all free
rational $G$-spectra are constructed from the small object $\Q [G]$, 
and since the category is suitably enriched over spectra, the category is 
equivalent to the category of module spectra over the (derived) endomorphism 
ring spectrum
$$\Etop=\Hom_{\Q [G]}(\Q [G], \Q [G])\simeq \Q [G]. $$
Note that we have already reached a category of non-equivariant spectra: 
the category of free rational $G$-spectra is equivalent to the category 
of $\Q [G]$-modules in spectra (Proposition \ref{prop.module.model}). 

Having reached a module category (over the non-commutative ring $\Q [G]$), 
the next step is to  move to a category of modules over a commutative ring. 
For this (see Section \ref{sec:Koszul}) we mimic the classical 
Koszul duality between modules over an exterior algebra (such as $H_*(G)$) 
and torsion modules over a polynomial ring (such as $H^*(BG)$).  The equivalence
takes a $\Q [G]$-module spectrum $X$ to $X\sm_{\Q [G]}\Q$, viewed
as a left module over the (derived) endomorphism ring spectrum 
$$\Rtopq = \Hom_{\Q [G]}(\Q, \Q ).$$
The quasi-inverse to this takes an $\Rtopq$-module $M$ to 
$\Hom_{\Rtopq} (\Q, M)$. In the first instance this is a module over
$\Etopp =\Hom_{\Rtopq}(\Q,\Q)$, but note that multiplication by
scalars defines the double centralizer map
$$\kappa: \Etop \simeq \Q[G]\lra \Hom_{\Rtopq}(\Q,\Q)=\Etopp. $$
We may therefore obtain an $\Etop$-module from an $\Etopp$-module by 
restriction of scalars.  We  use an Adams spectral sequence argument
to  show that $\kappa$ is a weak equivalence (Section~\ref{sec:ASS}),
and it follows that the categories of $\Etop$-modules and
$\Etopp$-modules are Quillen equivalent.   

Now the Morita theory from~\cite{ss2} shows that the category of 
$\Q$-cellular $\Rtopq$-modules (i.e., those modules built from $\Q$
using coproducts and cofibre sequences) is equivalent to the category of
$\Etopp$-modules (see Section \ref{sec:Koszultop}). The usual model 
of such cellular modules is obtained from a model for all modules 
by the the process of cellularization of model categories. Since we
make extensive use of this, we  provide an outline in 
Appendix~\ref{sec-cell}.

Unfortunately, $\Rtopq$ is not actually commutative. Recalling that in 
the definition we take a convenient cofibrant and fibrant replacement
$\Q [EG]$ of $\Q$, the ring operation on $\Rtopq =
\Hom_{\Q [G]}(\Q [EG], \Q[EG])$  is composition. 
However an argument of Cartan from the algebraic case shows
that it is quasi-isomorphic to the commutative ring 
spectrum $\Rtopp= \Hom_{\Q [G]}(\Q [EG], \Q)$, where the ring operation 
comes from that on $\Q$ and the diagonal of $EG$. 
Thus the category of free rational $G$-spectra is equivalent to the category 
of $\Q$-cellular modules over the commutative ring spectrum $\Rtopp$
(Proposition \ref{comm.ring}).

Finally, we are ready to move to algebra in Section~\ref{sec:Alg}. 
We have postponed this step as
long as possible because it loses control of all but the most formal properties.
Here we invoke the second author's equivalence between algebra 
spectra $A_{top}$ over the rational Eilenberg-MacLane spectrum $H\Q$ 
and DGAs $A_t$
over $\Q$. Under this equivalence, the corresponding module categories are
Quillen equivalent. Because we are working rationally, if $A_{top}$ is 
commutative,  the associated DGA is weakly equivalent to a commutative DGA 
by~\cite[1.2]{s-alg}. We apply this to the 
commutative ring spectrum $\Rtopp$ to obtain a DGA 
which is weakly equivalent to a commutative DGA, $\Rt$. 
Since the equivalence preserves homotopy we find $H^*(\Rt)=H^*(BG)$, 
and since this is a polynomial ring, we may construct a homology 
isomorphism $H^*(BG) \stackrel{\simeq}\lra \Rt$ by choosing cocycle 
representatives for 
the polynomial generators (Lemma~\ref{lemma.2.9}). 
This shows the category of $\Rtopp$-modules
is equivalent to the category of modules over $H^*(BG)$. We want to take
the cellularization of this equivalence, and it remains to show that the 
image of the $\Rtopp$-module $\Q$ is the familiar $H^*(BG)$-module of the 
same name: for this we simply observe that all modules $M$ with 
$H^*(M)=\Q$ are equivalent (Lemma~\ref{lemma.2.10}). It follows that
the category of free rational $G$-spectra is equivalent to the category 
of $\Q$-cellular $H^*(BG)$-modules 
(see Section \ref{subsec:rigidity} for a fuller discussion).  

We have obtained a purely algebraic model for free rational $G$-spectra.
It is then convenient to make it a little smaller and more concrete by 
showing that there is a model structure on DG torsion $H^*(BG)$-modules
which is equivalent to $\Q$-cellular $H^*(BG)$-modules.
It follows that
the category of free rational $G$-spectra is equivalent to the category 
of torsion $H^*(BG)$-modules as stated in Theorem \ref{thm:culmination} 
(see Section \ref{sec:CMtorsRmod} for a fuller discussion). 

\subsection{Diagrammatic summary.}\label{subsec:diagram}
Slightly simplifying the above argument, we may summarize it by saying we 
have a string of Quillen equivalences
\begin{multline*}
\freeGspectra \stackrel{1}\simeq 
\QGmod 
\stackrel{2}\simeq 
\cellRtopmod 
\stackrel{3}\simeq 
\cellRtoppmod 
\\
\stackrel{4}\simeq 
\cellRtmod \stackrel{5}\simeq 
\cellHBGmod \stackrel{6}\simeq 
\torsHBGmod
\end{multline*}
The first line takes place in topology, and the second in algebra.
Equivalence (1) is a standard Morita equivalence.
Equivalence (2) combines a Koszul Morita equivalence with 
a completeness statement and 
equivalence (3) uses Cartan's commutativity argument.
Equivalence (4) applies the second author's algebraicization theorem
to move from topology to algebra.
Equivalence (5) uses the fact that $H^*(BG)$ is a polynomial ring and
hence intrinsically formal together with a recognition theorem for
the module $\Q$.
Equivalence (6) applies some more elementary algebraic Quillen  equivalences. 

\subsection{Discussion.}

In this paper we have presented an argument that is as algebraic as possible
in the sense that after reducing to $\Q [G]$-modules, one can imagine making
a similar argument (Koszul, completeness, Cartan, rigidity) in many 
different contexts with an algebraic character.
It is intriguing that the present argument leaves the world of 
$G$-spectra so quickly. Generally this is bad strategy in equivariant
matters, but its effectiveness here may be partly due to the fact we are
dealing with free $G$-spectra.   We use a different outline in~\cite{tnq3} 
for torus-equivariant spectra which are not necessarily free. 

One drawback of  the present proof is that the equivalences have
poor monoidal properties.  To start with, it is not entirely clear how to give
all of the categories symmetric monoidal products: for example 
some ingenuity is required for torsion  $H^*(BG)$-modules, since there
are not enough flat modules.  

 On the other hand, one can give a more topological argument. This 
relies on special properties of equivariant spectra, but it does
have the advantage of having fewer steps and better monoidal
properties. We plan to implement this in a future paper covering
more general categories of spectra.

The restriction to connected groups is a significant simplification.
To start with, the algebraic model for disconnected groups is more  complicated.
From the case of finite groups (where the category of 
free rational $G$-spectra is equivalent to the category of graded
modules over the group ring \cite{Tate}), 
we see that we can no longer expect a single
indecomposable generator. Further complication is apparent for $O(2)$ (a split
extension of $SO(2)$ by the group $W$ of order 2), where
the component group acts on $H^*(BSO(2))$ \cite{o2q,Barnes1,Barnes2}.
One expects more generally, that if $G$ is the split extension of the 
identity component $G_1$ by the finite group $W$, 
the algebraic model consists of torsion modules over the twisted group ring 
$H^*(BG_1)[W]$. For non-split extensions like $Spin (2)$, the situation 
is similar for homotopy categories, but the extension is encoded in
the model category. The importance of the restriction to 
connected groups in the 
proof emerges in the convergence of the  Adams spectral sequence.

\section{The Morita equivalence for spectra}
\label{sec:AppB}

There are many possible models for equivariant $G$-spectra.
The two most developed highly structured model categories for 
equivariant $G$-spectra are the category of 
$G$-equivariant EKMM $S$-modules \cite{ekmm,em}  
and the category of orthogonal $G$-spectra \cite{mm}.  
Due to technicalities in the proof of Proposition~\ref{prop-free-model}, we
use orthogonal $G$-spectra here (see Remark~\ref{3.2} for a more
detailed discussion). 
Recall that in this section we are not working rationally unless
we explicitly say so.
We need a model for rational free $G$-spectra, and this 
involves further choices.  As we show below, one simple model
for free $G$-spectra is the category of module spectra over $\bS[G]$, 
the suspension spectrum of $G$ with a disjoint basepoint added;
analogously, a model for rational free $G$-spectra is the category of
module spectra over the spectrum $H\Q[G] = H\Q \sm G_+$.  
In this section we show that other, possibly more standard,  models
for free $G$-spectra and rational free $G$-spectra agree with these
models.

\subsection{Models for integral free $G$-spectra.}
Any model for the homotopy theory of free $G$-spectra will have a 
{\em compact (weak) generator}~\cite[2.1.2]{ss2}
given by the homotopy type of the suspension spectrum of $G_+$, 
and we write $\bS[G]$ for a representative of this spectrum.
Morita theory~\cite[3.9.3]{ss2} (see also~\cite[1.2]{ss-mon}) then shows that, under
certain hypotheses,  a model category associated to
the homotopy theory of free $G$-spectra is 
equivalent to the category of modules over the derived endomorphism ring 
spectrum of this generator.  In the following, we show
that for free $G$-spectra the derived endomorphism ring spectrum 
of $\bS[G]$ is again weakly equivalent to $\bS[G]$. 

To be specific, here we model free $G$-spectra by using the category
of orthogonal $G$-spectra and the underlying non-equivariant 
equivalences, that is, those equivalences detected by non-equivariant
homotopy $\pi_*(-)$.     
In more detail, this model structure is given by cellularizing the model category of orthogonal
$G$-spectra with respect to $\bS[G]$ using Hirschhorn's
machinery~\cite[5.1.1]{hh} (see also Appendix~\ref{sec-cell}). Because
we are working with free $G$-spectra, all universes from $G$-fixed to
complete  give equivalent models, but for definiteness we will use
a complete $G$-universe. 
Here and for the rest of this section $\bS[G]$ will denote the 
orthogonal $G$-spectrum taking each 
$G$-inner product space $V$ in the universe to $\bS[G](V) = S^V \sm G_+$ 
where $S^V$ is the one point compactification of $V$. 
Since $G$ is compact and $[\bS[G], -]_*$ detects the non-equivariant 
equivalences, $\bS[G]$ is a compact  generator for this 
model category (see also Appendix~\ref{prop-gen}).  We will show that
this category of free $G$-spectra satisfies the hypotheses
of~\cite[3.9.3]{ss2}, and identify the derived endomorphism ring of $\bS[G]$
as $\bS[G]$, to establish the following Morita 
equivalence.

\begin{prop}\label{prop-free-model}
The model category of free $G$-spectra described above is
Quillen equivalent to the category of (right) $\bS[G]$-modules in 
orthogonal spectra.   In turn, this is Quillen equivalent to modules
over the suspension spectrum of $G_{+}$ in any of the highly structured symmetric monoidal model categories of spectra.
\end{prop}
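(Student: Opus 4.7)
The plan is to apply the Schwede--Shipley Morita recognition theorem \cite[3.9.3]{ss2} to the cellularization of orthogonal $G$-spectra at $\bS[G]$. This requires a stable, proper, spectrally enriched model category, a single homotopically compact cofibrant generator, and an identification of its derived endomorphism ring spectrum. The model category of orthogonal $G$-spectra on a complete universe is proper, cellular, and spectrally enriched \cite{mm}, and cellularization via Hirschhorn's machinery (see Appendix~\ref{sec-cell}) preserves all of these properties while making $\bS[G]$ a generator. Homotopical compactness of $\bS[G]$ reflects the fact that, since $G$ is a compact Lie group, $G_+$ has the homotopy type of a finite $G$-CW complex, so that $[\bS[G],-]^G_*$ commutes with arbitrary coproducts up to isomorphism.

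The key computational input is that the derived endomorphism ring spectrum of $\bS[G]$ is again $\bS[G]$. Using the free/forgetful adjunction between non-equivariant orthogonal spectra and orthogonal $G$-spectra, one has
\[
\Hom_{G}(\bS[G], \bS[G]) \simeq \Hom(\bS, \bS[G]) \simeq \bS[G],
\]
with the ring structure on the left given by composition corresponding to the multiplication on $\bS[G]$ induced by the group multiplication on $G$. Tracking this ring structure through the adjunction is the delicate part, but it reduces to the observation that right translation on the target copy of $G_+$ realizes the multiplication on $\bS[G]$ via self-maps of $\bS[G]$ as a $G$-spectrum under left translation.

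Combining these two ingredients, \cite[3.9.3]{ss2} produces the asserted Quillen equivalence with right $\bS[G]$-modules in orthogonal spectra. For the final sentence, I would invoke the transport-of-modules results of \cite{ss-mon} along the known zig-zag of strong monoidal Quillen equivalences between the highly structured symmetric monoidal models of spectra (orthogonal spectra, symmetric spectra, and EKMM $S$-modules). Each such equivalence sends the suspension spectrum $\Sigma^\infty G_+$ to a weakly equivalent ring spectrum in the target model and induces a Quillen equivalence on the corresponding categories of right modules, delivering the claimed uniform statement.

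I expect the main obstacle to be the first step: confirming that the cellularized model structure is still spectrally enriched, cofibrantly generated, and proper in the precise form required by \cite[3.9.3]{ss2}, and that one can arrange $\bS[G]$ to be simultaneously cofibrant and fibrant so that its point-set endomorphism spectrum correctly computes the derived endomorphism ring. Once this foundational bookkeeping is secured, the endomorphism identification and the transport between symmetric monoidal models of spectra are essentially formal.
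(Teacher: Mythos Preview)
Your outline matches the paper's proof: Morita theory via \cite[3.9.3]{ss2} (in the form \cite[1.2]{ss-mon}), the change-of-groups identification $F_{S_G}(\bS[G],\bS[G])^G\cong \bS[G]$, and transport to other models of spectra via \cite{ss-mon}, \cite{mmss}, \cite{schwede}. The one place where your expectation diverges from what is actually done is the bifibrancy issue, which you correctly flag as the obstacle but mischaracterize as bookkeeping.

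You cannot simply ``arrange $\bS[G]$ to be simultaneously cofibrant and fibrant'' in orthogonal $G$-spectra: it is cofibrant but not fibrant, and an arbitrary fibrant replacement destroys the point-set ring structure. The paper instead takes the fibrant replacement $f\bS[G]$ in the category of \emph{monoids}, so that the derived endomorphism ring is the honest ring spectrum $F_{S_G}(f\bS[G],f\bS[G])^G$. Your change-of-groups computation gives the point-set ring isomorphism $\bS[G]\cong F_{S_G}(\bS[G],\bS[G])^G$, and the remaining work is to compare this with $F_{S_G}(f\bS[G],f\bS[G])^G$ \emph{as ring spectra}. The evident zig-zag through $F_{S_G}(\bS[G],f\bS[G])^G$ consists only of module-level equivalences (the left map is $\bS[G]\to f\bS[G]$; the right is a trivial fibration by SM7, using that monoid trivial cofibrations forget to trivial cofibrations \cite{ss1}). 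To upgrade this to a zig-zag of ring maps the paper invokes the quasi-equivalence technique of \cite[A.2.2]{ss2}: the middle term is a bimodule, and the pullback $\mathcal{P}$ of the zig-zag carries a unique ring structure making both projections ring homomorphisms, yielding the desired weak equivalences of ring spectra. This step, together with the passage $\bS[G]\simeq \bS[G]^{op}$ via the inverse map (needed to land in \emph{right} modules), is the genuine technical content; after it, the rest is indeed formal as you say.
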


\begin{proof}
The category of orthogonal G-spectra is compatibly tensored, 
cotensored and enriched over orthogonal spectra~\cite[II.3.2, III.7.5]{mm}.  
Specifically, given two orthogonal $G$-spectra $X$, $Y$ the 
enrichment over orthogonal $G$-spectra is denoted $F_{S_G}(X,Y)$, and
the enrichment over orthogonal spectra is given by taking $G$-fixed
points.   
Since $\bS[G]$ is cofibrant and
a compact generator of free $G$-spectra, it follows from~\cite[1.2]{ss-mon} 
that the model category of free $G$-spectra is Quillen equivalent
to modules over the  endomorphism ring spectrum $\End_G(f\bS[G])=
\FS (f\bS[G], f\bS[G])^G$ where $f$ is any fibrant replacement
functor in orthogonal $G$-spectra.   Since $\bS[G]$ is a monoid
and fibrant monoids are fibrant as underlying spectra, we
take $f$ to be the fibrant replacement in the category of monoids
in orthogonal $G$-spectra.  

We make use of the equivalences of ring spectra
$$\bS[G] \simeq (\bS[G])^{op}\simeq \FS(\bS[G], \bS[G])$$ 
where the first equivalence is the inverse map. 
For the second equivalence, note that the change of groups equivalence 
holds in the strong sense that since $\bS[G] = S_G \sm G_+$,
for any $G$-space $X$ the function spectrum $F_{S_G}(S_G \sm G_+, X)$ 
is given by the levelwise based function space $\map (G_+, X)$ whose $G$-fixed
point space is  just $X$.

Restriction and extension of scalars over a weak equivalence of orthogonal
ring spectra induce a Quillen equivalence between the categories of modules 
by~\cite[7.2]{ss-mon}; see also~\cite{mmss}.
Applying this several times we see that once we show that
$\FS(\bS[G], \bS[G])$  and $\FS(f\bS[G], f\bS[G])$ are 
weakly equivalent as ring spectra,  the model categories of module 
spectra over $\End_G(f\bS[G])$ and module spectra over $\bS[G]$ are Quillen 
equivalent.

We first show that we have a zig-zag of weak equivalences
of module spectra
$$\FS (\bS [G],\bS [G])\stackrel{\simeq }\lra
\FS (\bS [G],f\bS [G])\stackrel{\simeq}\twoheadleftarrow 
\FS (f\bS [G],f\bS [G]).$$
The first of these is 
a weak equivalence since it is isomorphic to 
$ \bS[G] \lra  f\bS [G]$.
The second map is a trivial fibration since the enrichment over
orthogonal spectra satisfies the analogue of SM7 \cite[12.6]{mmss} and
in the first variable we have a trivial cofibration  and the second variable 
is a fibrant object.  Since the fibrant replacement functor 
here is in the category of monoids, we must use the fact   
that trivial cofibrations of monoids forget to trivial cofibrations
by~\cite{ss1}.  

This zig-zag of module level equivalences is actually a
quasi-equivalence~\cite[A.2.2]{ss2}. Namely, these maps make   
$\FS (\bS [G],f\bS [G])$ a bimodule over $\End(\bS[G])$ and
$\End(f\bS[G])$ and the maps are given by right and left multiplication
with the fibrant replacement map $\bS[G] \to f\bS[G]$ 
(thought of as an ``element" of
the bimodule.) 
A zig-zag of equivalences of ring spectra can then be created from
this quasi-equivalence. Construct $\mathcal P$ as the pull back in 
orthogonal spectra of the quasi-equivalence diagram.  
$$\begin{array}{ccc}
\mathcal P&\stackrel{\simeq}\lra& \FS (f\bS [G],f\bS [G])\\
\downarrow \simeq&&\downarrow \simeq\\
\FS (\bS [G],\bS [G])&\stackrel{\simeq}\lra& \FS (\bS [G],f\bS [G])
\end{array}$$
Then $\mathcal P$
has the unique structure as a orthogonal ring spectrum such that the
maps $\mathcal P \to \FS(\bS[G], \bS[G])$  and
$\mathcal P \to \FS(f\bS[G], f\bS[G])$ are homomorphisms of ring $G$-spectra.
Since the right displayed map in the quasi-equivalence is a trivial fibration,
the pull back left map is also.   Since the bottom map is also a weak equivalence, 
the top map out of the pull-back is a weak
equivalence as well by the two out of three property.

According to~\cite[Cor. 1.2]{ss-mon}, the Morita theorem from~\cite[3.9.3]{ss2}
can be restated to give an equivalence with modules over endomorphism ring
spectrum in  any of the highly structured symmetric monoidal model
categories of spectra.  
It is also easy to see that the comparison
functors between these various models preserve suspension spectra.  
So, since $\bS[G]$ is a suspension spectrum, the comparison theorems 
of~\cite{mmss} and~\cite{schwede} show that one can model free $G$-spectra
in any of these settings as the module spectra over the suspension spectrum
of $G_+$. 
\end{proof}

\begin{remark}\label{3.2}
To model free $G$-spectra directly using~\cite{ekmm},  one would again need to
identify the derived endomorphism ring
of the suspension spectrum of $G_{+}$.   Since all spectra are fibrant in~\cite{ekmm},
one would then consider the endomorphism ring of a cofibrant replacement.  
To mimic the proof above one would need this to be a cofibrant
replacement as a ring spectrum at one point and a cofibrant
replacement as a module spectrum at another.  This technical
complication persuaded us to use orthogonal spectra instead. 
\end{remark}

\subsection{Models for rational free $G$-spectra.}
We now need a model for {\em rational} free $G$-spectra. 
One can localize any of the models for free $G$-spectra with respect to 
rationalized non-equivariant equivalences,
given by $\pi_*(-) \tensor \bQ$.  
The statements in this subsection hold in any of the highly structure monoidal model
categories of spectra;  
to be specific though, we use orthogonal spectra.
Very similar proofs hold in other models. In particular, let $L_{\bQ}(\bS[G]\modu)$ 
denote the localized model structure given by~\cite[4.1.1]{hh}.  
As above, one can also model this using spectral algebra; we will show
that another model for rational free $G$-spectra is given by
$H\Q [G]$-module spectra where $H\Q [G]$ denotes the 
spectrum $ H\bQ \sm \bS[G] \iso  H\bQ \sm G_+$.

\begin{prop}\label{prop.module.model}
The localization of free $G$-spectra with respect to rational
equivalences, $L_{\bQ}(\bS[G]\modu)$, is Quillen equivalent to the category 
of $H\Q[G]$-module spectra.  Thus $H\Q [G]$-module spectra is a model of
rational free $G$-spectra.
\end{prop}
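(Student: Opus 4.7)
The plan is to argue in two stages: first rationalize both sides of Proposition~\ref{prop-free-model}, and then compare rational $\bS[G]$-modules with $H\Q[G]$-modules via base change along the ring spectrum map $\bS[G]\to H\Q[G]$.

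For the first stage, Proposition~\ref{prop-free-model} identifies free $G$-spectra with $\bS[G]$-modules. On both sides the rational weak equivalences are precisely the $H\Q$-equivalences of underlying spectra, and both categories are cofibrantly generated and left proper, so Hirschhorn's localization machinery \cite{hh} produces Bousfield localizations $L_\Q(\bS[G]\text{-mod})$ and $L_\Q(\text{free }G\text{-spectra})$. Since the Quillen equivalence of Proposition~\ref{prop-free-model} preserves and reflects $H\Q$-equivalences (the left adjoint is of the form $-\sm_{\bS[G]}(\text{something})$ and the enrichment is compatible), it descends to a Quillen equivalence of the rationalized categories.

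For the second stage, the unit $\bS\to H\Q$ smashed with $G_+$ gives a map of ring spectra $\eta\colon \bS[G]\to H\Q[G]$ which is a rational equivalence of underlying spectra. Restriction and extension of scalars along $\eta$ yield a Quillen adjunction
\[
-\sm_{\bS[G]} H\Q[G] \colon \bS[G]\text{-mod} \rightleftarrows H\Q[G]\text{-mod} \colon U
\]
as in \cite{ss-mon}. Since every $H\Q[G]$-module is after restriction an $H\Q$-module spectrum, every object on the right is already rational, so the right-hand model structure is its own rationalization; hence the adjunction descends to $L_\Q(\bS[G]\text{-mod}) \rightleftarrows H\Q[G]\text{-mod}$. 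On a cofibrant $\bS[G]$-module $M$, the derived unit is $M\to M\sm_{\bS[G]} H\Q[G] \iso M\sm_\bS H\Q$, which is rationalization and so a rational equivalence. On a fibrant $H\Q[G]$-module $N$, the derived counit factors as the rationalization of the underlying spectrum $UN$, which is a weak equivalence because $UN$ is already an $H\Q$-module and hence rational. This verifies the Quillen equivalence, and chaining with the first stage gives the proposition.

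The main technical obstacle is the bookkeeping for the localizations in the first stage: one must check that Hirschhorn's localization machinery actually applies in the setting of modules over an orthogonal ring spectrum (requiring left properness and a generating set of rational equivalences between cofibrant objects), and that the Quillen equivalence of Proposition~\ref{prop-free-model} is homotopically compatible with smashing with $H\Q$. Once these formal properties are in hand, both the construction of the localized adjunction and the verification of derived unit/counit equivalences in the second stage are essentially automatic.
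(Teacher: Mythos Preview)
Your second stage is exactly the paper's argument: both use the extension-of-scalars adjunction along $\bS[G]\to H\Q[G]$ (equivalently, smashing with $H\Q$, since $M\sm_{\bS[G]}H\Q[G]\cong M\sm_{\bS}H\Q$) and verify the Quillen-equivalence criterion by observing that the derived unit is rationalization and that $H\Q[G]$-modules are already rational. The paper is slightly more explicit at one point, invoking the Tor spectral sequence to justify $\pi_*(H\Q\sm X)\cong\pi_*(X)\otimes\Q$, whereas you simply call this ``rationalization''; this is fine but worth one sentence of justification.

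Your first stage is not in the paper's proof at all: the paper simply \emph{defines} the rational model of free $G$-spectra to be $L_\Q(\bS[G]\modu)$, relying on Proposition~\ref{prop-free-model} to justify the name, and proceeds directly to the base-change argument. Your inclusion of a separate step descending the Morita equivalence through localization is not wrong, but it is unnecessary overhead for the proposition as stated and introduces exactly the bookkeeping you flag as the ``main technical obstacle''. You can safely drop it.
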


\begin{proof}
Consider the functor $H\bQ \sm (-)$ from $L_{\bQ}(\bS[G]\modu)$
to $H\Q[G]$-modules.  
Since $\bQ$ is flat over $\pi^S_*$, one can use the Tor spectral
sequence from~\cite[IV.4.1]{ekmm} 
to show that the derived homotopy of $H\bQ \sm X$
is isomorphic to the rationalized derived homotopy of $X$; that is, 
$\pi_*(f(H\bQ\sm X)) \iso \pi_*(fX) \otimes \bQ$ for any cofibrant 
$\bS[G]$-module spectrum $X$.  
Thus, $H\bQ \sm -$ preserves weak equivalences.  Since
cofibrations in $L_{\bQ}(\bS[G]\modu)$ agree with those in $\bS[G]\modu$,
$H\bQ \sm -$ also preserves cofibrations
and is thus a left Quillen functor.  Denote its right adjoint by
$U$.

To show that these functors induce a Quillen equivalence, let 
$X$ be cofibrant in $L_{\bQ}(\SGSS\modu)$
and $Y$ be fibrant in $H\Q [G]\modu$.  The map $H\bQ \sm X \to Y$
is a weak equivalence if and only if 
$\pi_*f(H\bQ \sm X) \iso \pi_*Y$.  
By the above calculation for the source, this is true if and only if 
$\pi_*(fX) \otimes \bQ \iso \pi_*(Y)\otimes \bQ$
since $Y$ is a $H\Q[G]$-module and its homotopy groups are rational.
But this holds if and only if
the map $X \to UY$ is a rational weak equivalence.
\end{proof}
Note, the convention that everything is rational is back in 
place after this section. In particular, $H\Q [G]$ is henceforth
denoted by $\bS [G]$; it was denoted $\Q [G]$ in Section \ref{sec:overview}.

\section{An introduction to Koszul dualities.}
\label{sec:Koszul}
In this section we describe a general (implicitly enriched) strategy for proving
a Koszul duality result, 
$$\modE \simeq \kcellRmod ,$$
and illustrate it  in the motivating algebraic example (explicitly enriched over chain complexes).
We then give a proof of the topological case (explicitly enriched over spectra) 
in Section \ref{sec:Koszultop}. 

To describe the general strategy, we start from $\modE$, an enriched model 
category of modules over a ring $\cE$.
We consider a {\em bifibrant} (that is, cofibrant and fibrant) 
$\cE$-module $k$ and its enriched endomorphism ring, $\Rp=\HomE (k,k)$ 
(we will later use the simpler notation $R$ for an equivalent {\em commutative}
ring). Evaluation then gives $k$ 
a left $\Rp$-module structure, and we obtain a functor 
$$\Efunc:\Rpmod \lra \modEtilde$$
with $\Etilde= \HomRp (k,k)$ and 
$$\Efunc(M)=\HomRp (k,M).$$
This takes values in $\modEtilde$ since $\HomRp (k,M)$ is a 
right $\Etilde$-module by composition.  
By construction $k$ is a generator of $\kcellRpmod$, when $k$ 
is small in the homotopy category~\cite[2.1.2]{ss2} as an $\Rp$-module, 
so Morita theory applies here to give a Quillen equivalence 
between $\kcellRpmod$ and $\modEtilde$.  This statement is 
proved as part of Theorem~\ref{thm-kcell-modE} below. 

We may then restrict along the double centralizer map 
$$ \kappa: \cE \lra \Hom_{\HomE(k,k)}(k,k)=\HomRp (k,k) = \Etilde$$
adjoint to the action map.  Composing then
gives a functor 
$$E:\Rpmod \lra \modE$$ where
$E=\kappa^* \Efunc$, so that  $E(M) = \HomRp (k,M)$, now thought
of as an $\cE$-module. 
Provided the map $\kappa$ is a weak equivalence
we combine this with the Morita equivalence between
$\kcellRpmod$ and $\modEtilde$ mentioned above
to prove in  Theorem~\ref{thm-kcell-modE} below that 
we have a Quillen equivalence $$\kcellRpmod \simeq \modE.$$

To motivate the terminology and display the obstacles in a 
familiar context, we describe the classical example in our terms.
A subscript $a$ is added here to specify that this is the algebraic
case. 
\begin{example}
\label{eg:alg}
The algebraic example works with the ambient category 
of differential graded $k$ modules (i.e., chain complexes) for a field
$k$ and 
$$\cE= \Ea=H_*(G),$$
an exterior algebra on a finite dimensional vector space in odd degrees,
with trivial differential.
Up to equivalence (see more detail in the next paragraph) we find 
$$R= \Ra \simeq H^*(BG),$$
a  symmetric algebra on a finite dimensional vector space in even 
codegrees. The conclusion is the statement
$$\mbox{mod-$H_*(G)$} \simeq \mbox{$k$-cell-$H^*(BG)$-mod}.$$
This is a Koszul duality, since 
 we will describe in Section \ref{sec:CMtorsRmod} below how 
$k$-cell-$H^*(BG)$-mod is a particular
model for the category of torsion modules over the polynomial ring 
$H^*(BG)$.

It may be helpful to be a bit more precise, to highlight some of the
technical issues we need to tackle. Choosing the projective model 
structure on the category 
of $\cEa$-modules, all objects are fibrant and therefore
a bifibrant version of  $k$ is given by a projective resolution $k_c$ 
(in this case we could be completely explicit). Then 
we have $\Ra'=\Hom_{\cEa}(k_c,k_c)$. 
We then argue that we may choose $k_c$ so that it has a 
cocommutative diagonal map and hence a Cartan commutativity
argument (see Proposition~\ref{comm.ring}) gives a ring
map to the commutative DGA  $\Hom_{\cEa}(k_c,k)$. 
It is formal that this map is a weak equivalence; 
thus $\Ra'$ is equivalent to $\Ra=\Hom_{\cEa}(k_c,k)$. 
Since its homology is the polynomial 
ring $H^*(BG)$, we may pick 
representative cycles in even codegrees to give a homology isomorphism 
$$H^*(BG) \lra \Hom_{\cEa}(k_c,k).$$
Note, this implicitly uses the intrinsic formality of a polynomial
ring on  even degree generators (Lemma~\ref{lemma.2.9}).  

Returning to the main argument,  $k_c$ is a left $\Ra'$-module by construction. 
To form $\cEa'$, we replace $k_c$ by a version bifibrant in $\Ra'$-modules;
again, one could be completely explicit. 
To see that the comparison map $\kappa$ is a homology isomorphism, 
we use the classical calculation $\Ext_{H^*(BG)}^{*,*}(k,k)\cong H_*(G)$, 
which could be viewed as a collapsed Eilenberg-Moore spectral sequence. 
\end{example}

\section{The topological Koszul duality.}
\label{sec:Koszultop}
In this section we establish the Quillen equivalence
$$\modE \simeq \kcellRmod $$
in the topological setting (enriched over spectra)
using the strategy of Section \ref{sec:Koszul}.

Before turning to the proof, we describe the topological example, following the 
pattern of Example \ref{eg:alg}.   
The topological example works best with the ambient category of EKMM spectra
\cite{ekmm} because all spectra are fibrant in this model, so 
we work with EKMM spectra throughout this section; 
by Proposition \ref{prop-free-model} 
free rational $G$-spectra are modeled by $\bS[G]$-module spectra in EKMM 
spectra.   We then use the 
comparison functors of~\cite{schwede} to pass to symmetric 
spectra on the way to shifting to algebra in Section~\ref{sec:Alg}.  
Note as well that these comparison functors take the symmetric spectrum
$\SGSS$ to the EKMM suspension spectrum of $G_+$, $\bS[G]$.
\begin{example}
\label{eg:top} 
We take 
$$\cE=\Etop=\bSG ,$$
 the suspension spectrum of $G_+$ from~\cite{ekmm}.
The cofibrant objects in $\Etop$-modules are those built from $\bSG$
(i.e., retracts of $\bSG$-cell objects) 
and weak equivalences are created in spectra ($\bS$-modules). 
The sphere spectrum $\bS$ is an $\bSG$-module by using the augmentation 
$\bSG \lra \bS [*] =\bS$.  
Since $\bS$ is not cofibrant as an $\bSG$-module, we consider a cofibrant 
replacement 
$$\bS_c = \bS [EG] = \bS \sm EG_+.$$  
We then take 
$\ktop=\bS_{cf} = \bS[EG]$ since it is cofibrant and fibrant ({\em bifibrant})
and weakly equivalent to $\bS$.

Now, using the internal enrichment of EKMM spectra we define 
$$\Rtopq =\FHom_{\Etop}(\ktop,\ktop)=\FHom_{\bSG}(\bS_{cf},\bS_{cf}).$$
It is central to our formality argument that this is equivalent to a 
commutative ring spectrum. 
\begin{prop}\label{comm.ring}
The ring spectrum $\Rtopq =\FHom_{\bS [G]}(\bS [EG],\bS [EG])$ (with 
ring operation given by composition) is equivalent to the
 commutative ring spectrum
$\Rtopp =\FHom_{\bS [G]}(\bS [EG],\bS )$ (with ring operation coming 
from the diagonal map of the space $EG$). 
\end{prop}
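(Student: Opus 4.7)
The plan is to construct a direct map of ring spectra from the commutative ring $\Rtopp$ to the composition ring $\Rtopq$ using the coalgebra structure on $\bS[EG]$ coming from the diagonal $\Delta: EG \to EG \times EG$, and then verify it is a weak equivalence. Throughout I exploit that $\bS[EG]$ is a cofibrant $\bS[G]$-module equipped with a strictly coassociative and cocommutative diagonal $\delta: \bS[EG] \to \bS[EG] \sm \bS[EG]$ (induced on the spectrum level from the space-level Cartesian diagonal, hence strict in EKMM) and augmentation $\epsilon: \bS[EG] \to \bS$, where the target of $\delta$ is regarded as an $\bS[G]$-module via the diagonal $\bS[G] \to \bS[G] \sm \bS[G]$.

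First I would define the comparison
$$\phi: \Rtopp \lra \Rtopq, \qquad \phi(f) = (\mathrm{id} \sm f) \circ \delta,$$
under the canonical identification $\bS[EG] \sm \bS = \bS[EG]$. The map $\phi(f)$ is $\bS[G]$-equivariant since $\delta$ is equivariant for the diagonal action, $f$ is equivariant for the $\bS[G]$-action on $\bS$ through augmentation, and the diagonal action on $\bS[EG] \sm \bS$ then reduces to the usual action on $\bS[EG]$. The central computation is that $\phi$ is a ring homomorphism: writing $\delta(x) = \sum x_{(1)} \sm x_{(2)}$ in Sweedler notation and applying coassociativity, one finds
$$\phi(f) \circ \phi(g) \,=\, \sum x_{(1)} \cdot f(x_{(2)}) \cdot g(x_{(3)}) \,=\, \phi(f \star g),$$
where $\star$ is the convolution product $\mu_{\bS} \circ (f \sm g) \circ \delta$ that makes $\Rtopp$ into a commutative ring spectrum (commutativity coming from strict cocommutativity of $\delta$). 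Unit preservation follows from the counit identity $(\mathrm{id} \sm \epsilon) \circ \delta = \mathrm{id}$, which yields $\phi(\epsilon) = \mathrm{id}_{\bS[EG]}$, the unit of $\Rtopq$ under composition.

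Next I would show $\phi$ is a weak equivalence. Since $\epsilon: \bS[EG] \to \bS$ is a weak equivalence of $\bS[G]$-modules and $\bS[EG]$ is cofibrant (with $\bS$ fibrant as in any object of EKMM), post-composition with $\epsilon$ induces a weak equivalence of spectra $\epsilon_*: \Rtopq \to \Rtopp$. The same counit identity gives $\epsilon_* \circ \phi = \mathrm{id}_{\Rtopp}$, exhibiting $\phi$ as a section of a weak equivalence, so $\phi$ is itself a weak equivalence by two-out-of-three.

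The main obstacle is establishing the ring-map property cleanly: although the argument is a formal bialgebra calculation once strict coassociativity, strict cocommutativity and strict symmetry of $\sm$ are all available, one must be careful to identify the diagonal $\bS[G]$-action on $\bS[EG] \sm \bS[EG]$ correctly and verify that $\Rtopp$ genuinely carries a strict commutative ring-spectrum structure (as opposed to an $E_\infty$ structure requiring extra coherence). The fact that everything is strict in EKMM $\bS[G]$-modules --- because the Cartesian diagonal of the space $EG$ is literally coassociative and cocommutative --- is what makes the comparison $\phi$ into a strict homomorphism of ring spectra rather than a mere zig-zag through $A_\infty$- or $E_\infty$-structures.
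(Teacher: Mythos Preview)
Your proof is correct and follows essentially the same approach as the paper: both construct the comparison map $\phi(f) = (\mathrm{id}\sm f)\circ\delta$ from $\Rtopp$ to $\Rtopq$ and verify it is a ring homomorphism using coassociativity of the diagonal of $EG$ (the paper via a commutative diagram, you via Sweedler notation). You additionally supply the weak-equivalence argument via the section identity $\epsilon_*\circ\phi=\mathrm{id}$, which the paper leaves implicit.
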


\begin{proof}
First, we define the map:
\begin{multline*}
\theta : \Rtopp =\FHom_{\bS [G]}(\bS [EG],\bS )
\stackrel{\sm_{\bS} \bS [EG]}\lra 
\FHom_{\bS [G]}(\bS [EG]\sm_{\bS} \bS [EG],\bS \sm_{\bS}\bS [EG] )\\
=\FHom_{\bS [G]}(\bS [EG \times EG],\bS [EG] )
\stackrel{\Delta^*}\lra 
\FHom_{\bS [G]}(\bS [EG],\bS [EG] )=\Rtopq.
\end{multline*}
To see this is a ring map, we use the following commutative diagram
$$\diagram
&\bS [EG] \dlto_{\Delta}\drto^{\Delta}&   \\
\bS [EG \times EG] \drto_{1\times \Delta} \xto[5,1]_{1\tensor (\alpha \cdot
\beta)}&&\bS [EG \times EG]
\dlto^{\Delta \times 1} \ddto^{1\tensor \beta}\\
&\bS [EG \times EG\times EG] \ddto^{1\tensor 1\tensor \beta}&\\
&&\bS [EG\times *] \dlto^{\Delta \times 1}\\
&\bS [EG \times EG\times *]\ddto^{1\tensor \alpha\tensor 1}&\\
&&\\
&\bS [EG \times *\times *]&
\enddiagram$$
Following down the left hand side we obtain $\theta (\alpha \cdot \beta)$, 
whereas the right hand side gives $\theta(\alpha)\circ\theta(\beta)$
as required. 
\end{proof}

\begin{remark}
Note also that we have an equivalence of ring spectra
$$\Rtopp =\FHom_{\bS [G]}(\bS [EG],\bS )\simeq \FHom_{\bS}(\bS [BG],\bS )=\Rtoppb$$
if one wants to minimise the appearance of equivariant objects.

Noting that $\bS[G]$ is analogous to  chains on $G$
and the final term to cochains on $BG$ (both with coefficients
in $\bS$), the equivalence $\Rtopp\simeq \Rtoppb$
 is an analogue of the Rothenberg-Steenrod theorem.
\end{remark}

Finally, we consider the map 
$$\kappa_{top} : \Etop=\bSG \stackrel{\simeq}\lra 
\FHom_{\Rtopq} (\bS_{cf},\bS_{cf})=\Etilde_{top}.$$
Using crude versions of the arguments constructing the
 Adams spectral sequence of Section \ref{sec:ASS}, 
we show that this is an equivalence in Lemma \ref{cor:kappatop}.
\end{example}

The next result holds in great generality, but here we only consider
the two relevant cases of enrichment over chain complexes and over spectra.  
Let $\cE, \Etilde, \Rp$ and $k$ be as described  
at the start of Section \ref{sec:Koszul}.

\begin{thm}\label{thm-kcell-modE}
In the algebraic and topological settings considered here,
if $k$ is a bifibrant and compact $\Rp$-module
and the double centralizer map $ \kappa: \cE \lra  \Etilde$
is a weak equivalence,
then there is a Quillen equivalence
$$\kcellRpmod \simeq \modE.$$
\end{thm}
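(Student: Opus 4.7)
The plan is to decompose the desired Quillen equivalence into two steps, each handled by standard machinery: a Morita-type equivalence and a change-of-rings along a weak equivalence.

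\textbf{Step 1: Morita equivalence for $k$ inside $\Rp$-modules.} First, I would apply the Morita theorem \cite[3.9.3]{ss2} (see also the algebraic analogue of~\cite[1.2]{ss-mon}) to the module category $\Rpmod$ with the distinguished object $k$. By hypothesis $k$ is bifibrant in $\Rpmod$ and compact in the homotopy category, so it is a compact generator of the cellularization $\kcellRpmod$ (cf.\ Appendix~\ref{sec-cell}). The endomorphism ring is precisely $\Etilde = \HomRp(k,k)$, and the induced adjunction
\[
- \otimes_{\Etilde} k \;:\; \modEtilde \;\rightleftarrows\; \kcellRpmod \;:\; \HomRp(k,-)
\]
is therefore a Quillen equivalence. (In the algebraic case this is the standard DG Morita theorem; in the topological case one uses the spectral version from~\cite{ss2} after passing through the comparison functors of~\cite{schwede} or working directly in orthogonal/symmetric spectra as in Section~\ref{sec:AppB}.)

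\textbf{Step 2: Change of rings along $\kappa$.} Next, since $\kappa : \cE \to \Etilde$ is by hypothesis a weak equivalence of ring objects (a quasi-isomorphism of DGAs or a weak equivalence of ring spectra), restriction and extension of scalars induce a Quillen equivalence
\[
\kappa_* \;:\; \modE \;\rightleftarrows\; \modEtilde \;:\; \kappa^*,
\]
by the standard results on changing rings along a weak equivalence (\cite[7.2]{ss-mon} in the topological setting, and the classical DG analogue in the algebraic setting).

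\textbf{Step 3: Composition and identification of the functors.} Composing the two Quillen equivalences gives the desired zig-zag $\kcellRpmod \simeq \modEtilde \simeq \modE$. One checks that on the right adjoint side the composite is exactly $M \mapsto \kappa^* \HomRp(k,M) = E(M)$, so the statement of the theorem is recovered verbatim.

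The main obstacle I expect is not the formal composition but the ambient verification that the cellularization $\kcellRpmod$ has all the properties required to feed into Morita theory (compact generation, compatibility of the enrichment with the model structure, the small-object argument for the generating cofibrations). These points are taken care of by the general cellularization machinery of Hirschhorn invoked in Appendix~\ref{sec-cell} together with the hypothesis that $k$ is compact, so once these technicalities are recorded the proof reduces to the two-step argument above. Note that the weak-equivalence hypothesis on $\kappa$ is used only in Step~2 and is exactly what allows us to replace $\Etilde$ by $\cE$ in the final answer.
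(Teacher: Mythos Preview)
Your proof is correct and follows essentially the same approach as the paper: decompose into a Morita equivalence $\kcellRpmod \simeq \modEtilde$ via \cite[1.2]{ss-mon} (using that $k$ is bifibrant and a compact generator of the cellularization), together with a change-of-rings Quillen equivalence $\modE \simeq \modEtilde$ along the weak equivalence $\kappa$ (the paper cites \cite[4.3]{ss1} rather than \cite[7.2]{ss-mon}). The only difference is that you present the two steps in the opposite order, which is immaterial.
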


In both the algebraic case (Example \ref{eg:alg}) and the spectral 
case (Example \ref{eg:top}) we have verified the hypotheses of this 
theorem.  Thus we have given a proof of the usual Koszul duality 
statement, and of the topological version we require.  Since
$\Rtopq$ and $\Rtopp$ are weakly equivalent by 
Proposition~\ref{comm.ring}, their module categories are 
Quillen equivalent by~\cite[4.3]{ss1}.

\begin{cor}\label{cor.5.5}
There are Quillen equivalences
$$\modEtop \simeq \kcellRtopqmod \simeq \kcellRtoppmod.\qqed$$
\end{cor}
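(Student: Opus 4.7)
The plan is to assemble the corollary from two Quillen equivalences: one coming directly from Theorem~\ref{thm-kcell-modE} applied in the spectral setting of Example~\ref{eg:top}, and one from Proposition~\ref{comm.ring} together with standard change-of-rings results.

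For the first equivalence $\modEtop \simeq \kcellRtopqmod$, I would invoke Theorem~\ref{thm-kcell-modE} with $\cE = \Etop = \bSG$, $\Rp = \Rtopq = \FHom_{\bSG}(\bS[EG], \bS[EG])$, and $k = \ktop = \bS[EG]$, the bifibrant replacement of $\bS$ fixed in Example~\ref{eg:top}. This requires checking the three hypotheses: (i) $\ktop$ is bifibrant as an $\Rtopq$-module, which is immediate from the construction since $\Rtopq$ is the endomorphism ring of $\ktop$ and $\ktop$ is a bifibrant $\bSG$-module by design; (ii) $\ktop$ is compact as an $\Rtopq$-module, which reduces to compactness of $\bS$ in the spectral setting; (iii) the double centralizer map $\kappa_{top} : \bSG \to \Etilde_{top}$ is a weak equivalence, the substantive input, to be supplied by the Adams spectral sequence argument flagged in Example~\ref{eg:top} and developed in Section~\ref{sec:ASS}.

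For the second equivalence $\kcellRtopqmod \simeq \kcellRtoppmod$, I would use the weak equivalence of ring spectra $\Rtopq \simeq \Rtopp$ furnished by Proposition~\ref{comm.ring}. The result \cite[4.3]{ss1} then gives a Quillen equivalence on the full module categories via restriction and extension of scalars. To pass from full modules to $\bS$-cellularizations, the point is that the derived functors of this Quillen equivalence carry each avatar of $\bS$ to an object weakly equivalent to the other; since $\bS$-cellularization (as reviewed in Appendix~\ref{sec-cell}) is preserved by Quillen equivalences that match up the distinguished compact generators, the cellularized equivalence follows.

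The main obstacle is hypothesis (iii) in the first equivalence, that is, showing that the double centralizer map $\kappa_{top}$ is a weak equivalence. This is where the Adams spectral sequence machinery of Section~\ref{sec:ASS} is essential, and is also where the hypothesis that $G$ is a connected compact Lie group ultimately enters through the convergence argument. Everything else is formal assembly, given Theorem~\ref{thm-kcell-modE} and Proposition~\ref{comm.ring}.
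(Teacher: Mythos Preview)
Your proposal is correct and follows essentially the same approach as the paper: the first equivalence is Theorem~\ref{thm-kcell-modE} applied in the setting of Example~\ref{eg:top} (with the key hypothesis on $\kappa_{top}$ supplied by Corollary~\ref{cor:kappatop}), and the second equivalence comes from Proposition~\ref{comm.ring} together with \cite[4.3]{ss1}. If anything, you are slightly more explicit than the paper in invoking Proposition~\ref{prop-cell-qe} to pass the second equivalence from full module categories to their $\bS$-cellularizations; the paper leaves this step implicit.
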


\begin{proof}[of \ref{thm-kcell-modE}]
This Quillen equivalence follows in two steps.
The simplest step is that restriction and extension
of scalars across a weak equivalence induces a Quillen
equivalence between the categories of modules  
$$ \modE \simeq_Q \modEtilde.$$
This holds by~\cite[4.3]{ss1}. 

The other step is to establish the Quillen equivalence with right adjoint
$$\Efunc:\kcellRpmod \to \modEtilde.$$
For the model structure on $\kcellRpmod$, we consider the 
cellularization of 
$\Rpmod$ with respect to the object $k$, see Proposition~\ref{prop 5.5}.
Here the fibrations are again the
underlying fibrations in $\Rpmod$ but the weak equivalences are
the maps $g: M \to N$ such that  $g_*: \HomRp (k,fM) \lra \HomRp (k,fN)$ are 
weak equivalences (of $\bS$-modules or chain complexes) where $f$
is a fibrant replacement functor in $\Rpmod$.  The cofibrant objects
here are the $k$-cellular objects; those built from $k$ by disk-sphere pairs.  
In particular, since $k$ was
bifibrant in $\Rpmod$ it is also bifibrant in $\kcellRpmod$.

Since $k$ is compact and it detects weak equivalences
in $\kcellRpmod$, it is a compact (weak) generator by~\cite[2.2.1]{ss2}.   
Since $k$ is cofibrant and fibrant in $\kcellRpmod$ as well
as $\Rpmod$, its derived endomorphism ring is $\Etilde = \HomRp (k,k)$.
It follows by~\cite[1.2]{ss-mon} for spectra or by~\cite{qs1q} for
differential graded modules that $\Efunc$ is the
right adjoint in a Quillen equivalence.
\end{proof}

\section{The Adams spectral sequence.}
\label{sec:ASS}
The following Adams spectral sequence is both an effective
calculational tool, and a quantitative version of a double-centralizer
statement. It states that for free $G$-spectra, the stable equivariant 
homotopy groups $\piG_*(X)=[S^0,X]^G_*$ give a complete and effective
invariant. 

\begin{thm} 
\label{thm:ASS}
Suppose $G$ is a connected compact Lie group. 
For any free rational $G$-spectra $X$ and $Y$ there is a natural
Adams spectral sequence
$$\Ext_{\HBG}^{*,*}(\piG_*(X) , \piG_*(Y))\Rightarrow [X,Y]^G_*.$$
It is a finite spectral sequence concentrated in rows $0$ to $r$ 
and strongly convergent for all $X$ and $Y$. \qqed
\end{thm}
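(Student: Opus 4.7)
The plan is to derive the spectral sequence algebraically after translating through the Quillen equivalence of Theorem~\ref{thm:culmination}, and then to invoke the finite global dimension of $H^*(BG)$ for the vanishing and convergence statements.

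First I would transport the problem to algebra. By Theorem~\ref{thm:culmination}, the free rational $G$-spectra $X,Y$ correspond to DG torsion $R := H^*(BG;\bQ)$-modules $\Xt,\Yt$, with $\pi^G_*(X)\cong H_*(\Xt)$ as graded $R$-modules (and similarly for $Y$), and
$$[X,Y]^G_* \;\cong\; \Hom^{*}_{\Ho(\torsHBGmod)}(\Xt,\Yt).$$
One should check that the topological $R$-action on $\pi^G_*(X)$, coming via $\pi^G_*(X) \cong \pi_*(EG_+ \sm_G X)$ and the Borel-type module structure, agrees with the action transported through the chain of equivalences; this can be traced along the sequence of functors listed in Section~\ref{subsec:diagram}.

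Next I would construct the spectral sequence as the hypercohomology spectral sequence of an injective resolution. Since $G$ is connected, Borel's theorem gives that $R$ is a polynomial ring on $r := \rank(G)$ generators in even codegrees, so its global dimension is $r$. I would choose an injective resolution $H_*(\Yt)\to I^{\bullet}$ of length at most $r$ in torsion $R$-modules, lift it to a resolution $\Yt \stackrel{\simeq}{\lra} J^{\bullet}$ by a Cartan--Eilenberg argument at the level of DG modules, and apply the enriched $\dgHom_R(\Xt,-)$. The associated spectral sequence of the resulting double complex has
$$E_2^{s,t} \;=\; \Ext^{s,t}_R\bigl(\pi^G_*(X),\,\pi^G_*(Y)\bigr)$$
and abuts to $\Hom^{*}_{\Ho(\torsHBGmod)}(\Xt,\Yt) \cong [X,Y]^G_*$. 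Naturality in both variables is built into the construction.

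Finiteness and strong convergence then come essentially for free: since $R$ has global dimension $r$, the $E_2$-term vanishes for $s > r$, so the spectral sequence is concentrated in rows $0 \le s \le r$; consequently only finitely many differentials can land in any given bidegree, the filtration on the abutment is of finite length, and strong convergence is automatic. The main obstacle I anticipate lies in working inside the torsion subcategory: one must confirm that there are enough injectives of the required form in $\torsHBGmod$ and that $\Hom$ groups computed there agree with those used to identify the abutment with $[X,Y]^G_*$, so that the hypercohomology machinery really lands on the correct target. A secondary but routine issue is checking that the Cartan--Eilenberg lift of an injective resolution of $H_*(\Yt)$ exists with controlled length, which again reduces to the polynomial (hence regular) character of $R$.
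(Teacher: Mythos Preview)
Your proposal has a genuine circularity problem. You invoke Theorem~\ref{thm:culmination} to transport the question to algebra, but in this paper the proof of Theorem~\ref{thm:culmination} depends on Corollary~\ref{cor:kappatop} (that the double centralizer map $\kappa_{top}$ is a weak equivalence), and that corollary is established in Section~\ref{sec:ASS} using precisely the Adams-tower machinery underlying Theorem~\ref{thm:ASS}: the realization of torsion injectives by wedges of $EG_+$ (Lemma~\ref{piGegisI}), the injective case $[X,\bI]^G\cong\Hom_{H^*(BG)}(\pi^G_*X,\pi^G_*\bI)$ (Lemma~\ref{lem:ASSintoinj}), and the finite Adams resolution of any $\bS[G]$-module. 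Thus Theorem~\ref{thm:ASS} is logically prior to Theorem~\ref{thm:culmination}, and you cannot appeal to the latter here.

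The paper instead proves Theorem~\ref{thm:ASS} directly in the homotopy category of free $G$-spectra, following the standard Adams procedure with $H_*=\pi^G_*$ valued in torsion $H^*(BG)$-modules: realize injectives topologically, verify the injective case by hand, build the Adams tower over $Y$, and deduce strong convergence from finite injective dimension together with the detection Lemma~\ref{univWhitehead} (which is where connectedness of $G$ enters). Even if one imagined Theorem~\ref{thm:culmination} as given, your route still leaves a nontrivial loose end you flag but do not resolve: the identification of the $H^*(BG)$-action on $\pi^G_*(X)$ with the action on $H_*(\Xt)$ obtained by pushing $X$ through the six-step zig-zag of equivalences. That compatibility is not automatic from the mere existence of a triangulated equivalence and is nowhere established in the paper; without it your $E_2$-identification does not follow.
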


This is useful for calculation, but we only need the less explicit
version in Subsection \ref{subsec:geomASS} for the proof of our main 
theorem. 

\subsection{Standard Operating Procedure.}

We apply the usual method for constructing an Adams spectral sequence
based on a homology theory $H_*$ on a category $\C$ with values in an 
abelian category $\cA$ (in our case $\C$ is the category of free rational
$G$-spectra, $\cA$ is the category of torsion $H^*(BG)$-modules
and $H_*=\piG_*$). It may be helpful to 
summarize the process: to construct an Adams spectral 
sequence for calculating $[X,Y]$ we proceed as follows. 

\noindent
{\bf Step 0:} Take an injective resolution 
$$0 \lra H_*(Y) \lra I_0 \lra I_1 \lra I_2 \lra \cdots $$
in $\cA$.  

\noindent
{\bf Step 1:} Show that enough injectives $I$ of $\cA$ (including the $I_j$)
can be realized by  objects $\bI$ of $\C$ in the sense that $H_*(\bI)\cong I$.
 
\noindent
{\bf Step 2:} Show that the injective case of the spectral sequence is 
correct in that homology gives an isomorphism
$$[X,Y] \stackrel{\cong}\lra \Hom_{\cA}(H_*(X),H_*(Y))$$
if $Y$ is one of the injectives $\bI$ used in Step 1.

\noindent
{\bf Step 3:} Now construct the Adams tower 
$$\diagram 
\vdots &\\
.\dto &\\
Y_2 \rto \dto & \Sigma^{-2} \bI_2\\
Y_1 \rto \dto & \Sigma^{-1} \bI_1\\
Y_0 \rto      & \Sigma^0 \bI_0\\
\enddiagram$$
over $Y=Y_0$ from the resolution. This is a formality from Step 2. 
We work up the tower, at each stage defining $Y_{j+1}$ to be
the fibre of $Y_j \lra \Sigma^{-j}\bI_j$, and noting that $H_*(Y_{j+1})$ 
is the $(j+1)$st syzygy of $H_*(Y)$.

\noindent
{\bf Step 4:} Apply $[X, \cdot ]$ to the tower. By the injective case
(Step 2), we identify the $E_1$ term with the complex $\Hom_{\cA}^*(H_*(X),I_{\bullet})$
and the $E_2$ term with $\Ext_{\cA}^{*,*}(H_*(X),H_*(Y))$.

\noindent
{\bf Step 5a:}  If the injective resolution is infinite, the first
step of convergence is to show that $H_*(\holim_j Y_j)$ is calculated
using a Milnor exact sequence from the inverse system $\{ H_*(Y_j)\}_j$,
and hence that $H_*(\holim_j Y_j)=0$. If the injective resolution  is
finite, this is automatic.

\noindent
{\bf Step 5b:} Deduce convergence from Step 5a by showing $\holim_j Y_j\simeq *$.
In other words we must show that $H_*(\cdot )$ detects isomorphisms
in the sense that $H_*(Z)=0$ implies $Z \simeq *$. In general, 
one needs to require that $\C$ is a category of appropriately complete objects for 
this to be true. This establishes conditional convergence. If $\cA$ has finite
injective dimension, finite convergence is then immediate.

\subsection{Implementation.}

In our case, Step 0 is a well known piece of algebra: 
torsion modules over the polynomial ring $H^*(BG)$ admit
injective resolutions consisting of sums of copies of the basic
injective $I=H_*(BG)$. Step 1 now follows from a familiar calculation.

\begin{lemma} 
 \label{piGegisI}
The injectives are realized in the sense that
$$\piG_*(\eg )=\Sigma^d I, $$
where $d$ is the dimension of $G$.
\end{lemma}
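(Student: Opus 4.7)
The plan is to compute $\piG_*(\eg)$ by passing to the non-equivariant stable homotopy of the orbit spectrum, rationalizing to ordinary homology, and identifying the result as a shifted copy of the basic injective. The suspension $\Sigma^d$ ultimately arises from the Adams isomorphism.

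First, since $\eg = EG_+$ is a free $G$-spectrum in the complete $G$-universe, I would invoke the Adams isomorphism, which relates the categorical $G$-fixed points of a free $G$-spectrum to its orbit spectrum, twisted by the adjoint representation of $G$. Applied to $\eg$, for which $EG_+/G = BG_+$, this yields
\[
\piG_*(\eg) \cong \pi^s_{*-d}(BG_+),
\]
where $d = \dim G$ is the real dimension of the adjoint representation $\mathrm{ad}(G)$. This is where the shift by $d$ enters.

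Second, rationalize: since we are working rationally throughout, the non-equivariant sphere spectrum is equivalent to $H\Q$, so $\pi^s_*(BG_+)\otimes\Q \cong H_*(BG_+;\Q) \cong H_*(BG;\Q)$ (up to the basepoint copy of $\Q$ in degree $0$). Combining with the Adams shift gives
\[
\piG_*(\eg) \cong \Sigma^d H_*(BG;\Q)
\]
as graded rational vector spaces.

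Third, I would identify $H_*(BG;\Q) \cong \Hom_\Q(\HBG,\Q)$ as the graded $\Q$-linear dual of the polynomial ring $\HBG$, and hence as the injective hull of the residue field $\Q$ in the category of torsion $\HBG$-modules; this is the basic injective $I$. The $\HBG$-module structure on $\piG_*(\eg)$, induced by the action of $\pi^G_*(S^0) \supseteq \HBG$, matches under these identifications the standard cap product action of $H^*(BG)$ on $H_*(BG)$, giving $\piG_*(\eg) \cong \Sigma^d I$ as $\HBG$-modules. The main obstacle is the careful invocation of the Adams isomorphism with its shift by the adjoint representation, which is the source of the $\Sigma^d$; verifying the sign convention and checking that the module structures agree on both sides of the identification is the crux of the argument.
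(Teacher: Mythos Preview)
Your approach is essentially the same as the paper's: Adams isomorphism, then rationalize, then identify $H_*(BG)$ with the basic injective $I$. The paper compresses this into the chain
\[
\piG_*(EG_+)\cong \pi_*(BG^{\mathrm{ad}})\cong H_*(BG^{\mathrm{ad}})\cong \Sigma^dH_*(BG)=\Sigma^dI.
\]
One point you should make explicit: passing from the adjoint twist to a pure integer shift $\Sigma^d$ (your line $\piG_*(\eg)\cong\pi^s_{*-d}(BG_+)$, or the paper's third isomorphism) is a Thom isomorphism and requires the adjoint representation to be orientable, which holds precisely because $G$ is connected. You mention the adjoint twist but then silently replace it by its dimension; the paper flags this as the place where connectedness enters. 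Your additional discussion of the $\HBG$-module structure via cap product is more than the paper records and is a welcome clarification.
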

\begin{proof}
We calculate
$$\piG_*(EG_+)\cong \pi_*(BG^{ad})\cong H_*(BG^{ad})\cong \Sigma^dH_*(BG)=\Sigma^dI,$$
where the third isomorphism used the fact that since $G$ is connected, 
the adjoint representation is orientable. 
\end{proof}

Steps 3 and 4 are formalities, 
and Step 5a is automatic since the category of modules over the polynomial
ring $H^*(BG)$ is of finite injective dimension. This leaves Steps 2 and 5b.

For Step 2 we prove the injective case of the Adams spectral sequence. 

\begin{lemma}
\label{lem:ASSintoinj}
Suppose $Y$ is any wedge of suspensions of $EG_+$.
For any free $G$-spectrum $X$, application of $\piG_*$ induces an 
isomorphism
$$[X,Y ]^G \stackrel{\cong}\lra \Hom_{\HBG}(\piG_*(X),\piG_*(Y)). $$
\end{lemma}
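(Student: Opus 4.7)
The plan is to prove the isomorphism by cellular induction on $X$, reducing to the case where $X$ is the compact generator $G_+$.

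\noindent\textit{Step 1 (base case $X = \Sigma^n G_+$).} For the left hand side, the induction--restriction adjunction $G_+ \sm (-) \dashv (-)|_e$ identifies
$$[\Sigma^n G_+, Y]^G_* \iso \pi_{*+n}(Y|_e).$$
Since $EG \simeq \star$ non-equivariantly, $Y|_e$ is (rationally) a wedge of spheres, and its homotopy is a direct sum of copies of $\Q$ in the evident degrees. For the right hand side, the Adams isomorphism (used in the proof of Lemma~\ref{piGegisI}, here applied to the free orbit $G$ itself) identifies $\piG_*(G_+)$ with $\pi_*(G_+/G \sm S^{\mathfrak g}) = \pi_*(S^d) = \Q$ concentrated in degree $d = \dim G$. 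The Borel classifying map $G_+/G = \{\star\} \to BG$ is constant, so the $\HBG$-action factors through the augmentation $\HBG \to \Q$; hence $\piG_*(G_+) \iso \Sigma^d\bigl(\HBG/\fm\bigr)$. Since this module is cyclic, $\Hom_{\HBG}(-, \piG_*(Y))$ commutes with the wedge on $Y$, and summand-by-summand the calculation reduces to the socle of a shifted copy of $I$; this socle is $\Q$ in the correct degree, matching the left hand side.

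\noindent\textit{Step 2 (cohomological properties in $X$).} Both $[-, Y]^G_*$ and $\Hom_{\HBG}(\piG_*(-), \piG_*(Y))$ are cohomological contravariant functors of $X$ and send coproducts to products. For the second functor, exactness on cofibre sequences uses the fact that $\piG_*(Y)$ is a direct sum of shifted copies of the injective $I = H_*(BG)$, hence itself injective: here one uses that $\HBG$ is Noetherian, so arbitrary direct sums of injectives remain injective.

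\noindent\textit{Step 3 (cellular induction).} Every free $G$-spectrum is built from wedges of suspensions of $G_+$ via cofibre sequences, with sequential homotopy colimits obtained as iterated cofibres of coproducts through the telescope construction. The base case from Step~1 together with the cohomological properties of Step~2 propagate the isomorphism to all free $G$-spectra $X$ via the five-lemma.

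The main technical point is the module computation in Step~1: identifying $\piG_*(G_+)$ as the cyclic torsion module $\HBG/\fm$ shifted to degree $d$ with trivial $\HBG$-action, and then matching the resulting socle calculation against the non-equivariant homotopy $\pi_*(Y|_e)$ in the correct degree shift.
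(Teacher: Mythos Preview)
Your overall strategy is the same as the paper's: reduce to the base case $X=G_+$ by observing that both sides are cohomology theories in $X$ (using injectivity of $\piG_*(Y)$), then further reduce the $Y$-variable to a single copy of $EG_+$ using smallness of $G_+$. Your Steps~2 and~3 are fine.

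There is, however, a genuine gap in Step~1. You compute both sides abstractly and observe that they are one-dimensional $\Q$-vector spaces in the same degree; but this does not show that the \emph{particular natural map} induced by $\piG_*$ is an isomorphism. A map between two copies of $\Q$ can perfectly well be zero. The paper faces exactly this issue and resolves it by checking non-triviality directly: a non-trivial $G$-map $f\colon G_+\to EG_+$ passes to a map $f/G\colon S^0\to BG_+$ which is visibly non-zero on reduced $H_0$, and this is (up to the suspension by the adjoint representation) precisely the effect of applying $\piG_*$. Without some such argument your base case is incomplete, and the five-lemma in Step~3 cannot get started.

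A smaller point: your reduction on the $Y$-side (``$\Hom_{\HBG}(-,\piG_*(Y))$ commutes with the wedge on $Y$'' because the source is cyclic) is correct but could be stated more simply as the paper does: $G_+$ is small on the left, and $\piG_*(G_+)$ is finitely generated on the right, so both sides turn the wedge into a direct sum. Either way, once you are down to $X=G_+$ and $Y=EG_+$, you must still verify non-triviality of the comparison map.
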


\begin{proof}
Since $\piG_*(Y)$ is injective, both sides are cohomology theories
in $X$, so it suffices to establish the special case $X=G_+$. Since 
$G_+$ is small and $\piG_*(G_+)$ is finitely generated, it suffices to 
deal with the special case $Y=EG_+$. 

In other words, we must show that
$$\piG_*: [G_+,EG_+ ]^G \stackrel{\cong}\lra \Hom_{\HBG}(\piG_*(G_+),\piG_*(EG_+)) $$
is an isomorphism. Now both sides consist of a single copy of $\Q$ in degree 0, 
so it suffices to show the map is non-trivial. This is clear since  a non-trivial 
$G$-map $f: G_+ \lra EG_+$ gives a map $f/G: S^0=(G_+)/G\lra (EG_+)/G=BG_+$ which 
is non-trivial in reduced $H_0$.
\end{proof}

Finally, for Step 5b we prove the universal Whitehead Theorem. This is 
where the connectedness of $G$ is used.

\begin{lemma}
\label{univWhitehead}
If $G$ is connected, then the functor $\piG_*$ detects isomorphisms in the sense that
if $f: Y \lra Z$ is a map of free $G$-spectra inducing an isomorphism
$f_* : \piG_*(Y ) \lra \piG_*(Z)$ then $f$ is an equivalence. 
\end{lemma}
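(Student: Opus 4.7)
The plan is to exploit the skeletal filtration of $EG$ to bridge equivariant and non-equivariant homotopy, then to use the polynomial structure of $H^*(BG)$ together with the connectedness of $G$ to force a contradiction. First, I would replace $f$ by its homotopy fibre $F$ and invoke the long exact sequence in $\piG_*$ to reduce to the statement: if a free $G$-spectrum $F$ satisfies $\piG_*(F)=0$, then $F \simeq *$. Since the cellularised model structure on free $G$-spectra has as weak equivalences precisely the underlying (non-equivariant) rational equivalences, this amounts to showing that $\pi_*(F)=0$, where $\pi_*$ denotes non-equivariant rational stable homotopy.

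To relate $\piG_*(F)$ and $\pi_*(F)$ I would filter $F \simeq EG_+ \wedge F$ by the skeleta of a $G$-CW decomposition of $EG$. The successive cofibres are wedges of $\Sigma^p G_+ \wedge F$ with the diagonal $G$-action, and a Wirthm\"uller-type calculation identifies $\piG_*(\Sigma^p G_+ \wedge F) \cong \pi_{*-p-d}(F)$, where $d = \dim G$ accounts for the adjoint-representation shift already visible in Lemma~\ref{piGegisI}. Splicing the resulting long exact sequences produces a Borel-type spectral sequence
\[ E^2_{p,q} = H_p(BG) \otimes \pi_q(F) \Longrightarrow \piG_{p+q+d}(F), \]
whose abutment in the case $F = EG_+$ reproduces Lemma~\ref{piGegisI}.

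Now the connectedness of $G$ enters. Since $H_*(BG;\Q)$ is a polynomial algebra concentrated in non-negative even degrees with $H_0(BG)=\Q$, the spectral sequence lies in the first quadrant. If $\pi_*(F) \neq 0$, let $n_0$ be the smallest integer with $\pi_{n_0}(F) \neq 0$; then the term $E^2_{0,n_0} = \pi_{n_0}(F)$ admits no outgoing differentials (their targets sit in negative cellular degree) and no incoming differentials (the sources $E^r_{r, n_0 - r + 1}$ have coefficient $\pi_{n_0 - r + 1}(F) = 0$ by minimality of $n_0$). Hence this class survives to $E^\infty$ and contributes non-trivially to $\piG_{n_0 + d}(F)$, contradicting the hypothesis.

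The main obstacle is dealing with those free $G$-spectra whose non-equivariant homotopy is not bounded below, so that no minimum $n_0$ exists; I expect to handle this via a connective-cover reduction using the fibre sequence $F\langle n \rangle \to F \to F_{<n}$ and a comparison of Borel spectral sequences, reducing the question to $F\langle n \rangle$ which is bounded below, before letting $n \to -\infty$. The connectedness hypothesis on $G$ is essential: for a disconnected $G$, classes arising from the component group in $H_*(BG)$ could cancel the putative surviving edge class, and the universal Whitehead theorem would genuinely fail.
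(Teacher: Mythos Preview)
Your bounded-below argument is sound, but the extension to unbounded spectra via connective covers does not close. The difficulty with the fibre sequence $F\langle n\rangle \to F \to F_{<n}$ is that neither piece inherits the hypothesis $\piG_*(-)=0$: your edge-survival argument applied to $F\langle n\rangle$ yields a nonzero class in $\piG_{m_n+d}(F\langle n\rangle)$ where $m_n$ is the least degree with $\pi_{m_n}(F\langle n\rangle)\neq 0$, but as $n\to -\infty$ this $m_n$ may itself tend to $-\infty$, and the class produced is only known to map to zero in $\piG_*(F)=0$. No limiting procedure over $n$ pins down a fixed degree in which a contradiction occurs; comparing Borel spectral sequences for decreasing $n$ does not help either, since the surviving edge class for one $n$ can be hit by a longer differential once lower homotopy groups of $F$ are allowed back in.

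The paper's main proof bypasses boundedness with a finite Koszul construction: writing $R=F(EG_+,\bS)$, a commutative ring spectrum with $\pi_*R=H^*(BG)=\Q[x_1,\dots,x_r]$, one forms $K_i=\fibre(x_1)\sm_R\cdots\sm_R\fibre(x_i)$ and checks $K_r\sm EG_+\simeq G_+$; an $r$-step induction on $i$ then shows $\piG_*(K_i\sm X)=0$ for all $i$, and $i=r$ gives $\pi_*(X)=\piG_*(G_+\sm X)=0$. The paper also sketches a spectral-sequence alternative closer in spirit to yours, but using the Serre sequence for $G\to X\to X/G$ rather than the Borel filtration of $EG$: its $E^2$-term is $H_*(X/G;H_*(G))$, which vanishes outright since $H_*(X/G)=\piG_*(X)=0$. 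That sequence runs in the opposite direction to yours---the hypothesis sits in the $E^2$-term rather than the abutment---so no survival argument, and hence no boundedness hypothesis, is needed.
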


\begin{proof}  Since $\piG_*$ is exact, it suffices to prove that if $\piG_*(X)=0 $
then $\pi_*(X)=\piG_*(G_+ \sm X)=0$.

For a general 
connected group we may argue as follows provided we use the fact that
$\Rtopp \simeq F(EG_+, \bS )$ is a commutative ring spectrum. Indeed, if
$\HBG=\Q [x_1, \ldots , x_r]$ we may view each generator 
$x_i$ as a map of $\Rtopp$-modules. As such we may form the Koszul complex 
$$K_{\Rtopp}(x_1,\ldots,x_i)=\fibre (\Rtopp \stackrel{x_1} \lra \Rtopp) \sm_{\Rtopp}
\ldots \sm_{\Rtopp}\fibre (\Rtopp \stackrel{x_i}\lra  \Rtopp), $$
and we find 
$$K_{\Rtopp}(x_1, \ldots , x_r)\sm EG_+ \simeq G_+ . $$

Now we argue by  induction on $i$ that 
$$\piG_*(K_{\Rtopp}(x_1, \ldots , x_i) \sm X)=0.$$
The case $i=0$ is the hypothesis and the case $i=r$ is the required conclusion.
However the fibre sequence
$$K_{\Rtopp}(x_1, \ldots ,x_{i+1})\lra K_{\Rtopp}(x_1, \ldots ,x_{i})
\stackrel{x_i} \lra K_{\Rtopp}(x_1, \ldots ,x_{i})$$
shows that the $i$th case implies the $(i+1)$st.
\end{proof}

\begin{remark}
(i) If $G$ acts freely on 
a product of spheres we may argue as in \cite{tnq1}; this is essentially 
the same as the present argument, but spheres and Euler classes realize
the homotopy generators and avoid the need to use a triangulated 
category of $\Rtopp$-modules. 

(ii) There are other ways to make this argument with less technology.
Note that since we are working rationally and $X$ is free $\piG_*(X)=H_*(X/G)$
and $\pi_*(X)=H_*(X)$.

Suppose first that $X$ is a 1-connected space. We may consider the Serre
Spectral Sequence for the fibre sequence $G \lra X \lra X/G$. Since $G$
is connected $X/G$ is simply connected and the spectral sequence 
$$H_*(X/G; H_*(G))\Rightarrow H_*(X)$$
is untwisted. By hypothesis it has zero $E^2$-term and hence $H_*(X)=0$.
Now for a general spectrum we may express $X$ as a filtered colimit of 
finite spectra. Each of these finite spectra is a suspension of a space, 
and since the spaces are free, the suspensions may be taken to be by 
trivial representations. Accordingly there is a Serre Spectral Sequence
for each of the finite spectra, and by passing to direct limits, there
is a similar spectral sequence for $X$ itself, and we may make the same
argument. 
\end{remark}

\subsection{A geometric counterpart of the Adams spectral sequence.}
\label{subsec:geomASS}

The completion result that we need is really a non-calculational 
version of part of the construction of the Adams spectral sequence. 
Despite its non-calculational nature, it is convenient to use special 
cases of the arguments we have used earlier in the section, which is 
why we have placed it here. 

\begin{prop}
\label{prop:nu}
The enriched functor $\bS [EG]\sm_{\bSG}( \cdot )$ gives a natural 
equivalence
$$
\nu_{X,Y}:F_{\bSG}(X,Y) \lra 
F_{\Rtopp} (\bS [EG]\sm_{\bSG} X,\bS [EG]\sm_{\bSG} Y )
$$
of spectra for all $\bSG$-modules $X$ and $Y$.
\end{prop}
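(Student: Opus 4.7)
The plan is a double cell induction in the variables $X$ and $Y$, with the base case supplied by the double centralizer equivalence already established. First I would observe that both sides of $\nu_{X,Y}$, viewed as bifunctors on the homotopy category of $\bSG$-modules, are homotopy-invariant and exact in each variable separately. On the left this is standard for the enriched hom. On the right, $\bS[EG]\sm_{\bSG}(\cdot)$ is a Quillen left adjoint, so it preserves cofibre sequences and coproducts; composing with $F_{\Rtopp}(-,-)$ (on which $\bS[EG]$ carries the $\Rtopp$-module structure coming from Proposition \ref{comm.ring}) then yields the usual variance and exactness. Naturality of $\nu$ in both slots is automatic from its construction out of the enriched functoriality of $\bS[EG]\sm_{\bSG}(\cdot)$.

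For the base case $X=Y=\bSG$, the left side reduces to $\bSG$ and $\bS[EG]\sm_{\bSG}\bSG\simeq \bS[EG]$, so $\nu_{\bSG,\bSG}$ becomes the double centralizer map
$$\bSG \lra F_{\Rtopp}(\bS[EG],\bS[EG]),$$
which, after identifying $\Rtopp$ with $\Rtopq$ via Proposition \ref{comm.ring}, is precisely $\kappa_{top}$; this is a weak equivalence by Lemma \ref{cor:kappatop}.

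Next, fixing $X=\bSG$, I would induct over a cell decomposition of $Y$. Since $\bSG$ is a compact generator of $\bSG$-modules and both sides are exact in $Y$ and carry coproducts to coproducts, the base case bootstraps to $\nu_{\bSG,Y}$ being an equivalence first for wedges of copies of $\bSG$, then (by the five lemma on cofibre sequences and passage to homotopy colimits) for all cellular $Y$, hence for every $\bSG$-module after cofibrant replacement. An entirely analogous induction in the $X$-variable, using contravariant exactness and the fact that $\bSG$ is small so that wedges in $X$ go to products on both sides, with $X=\bSG$ as the already-settled base case, then completes the proof.

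The substantive content is concentrated in the base case, which is handled by the Adams spectral sequence machinery of Section \ref{sec:ASS}. The main residual obstacle is technical bookkeeping: ensuring that the bifibrant replacements used to make $F_{\bSG}(X,Y)$ and $F_{\Rtopp}(-,-)$ derived are respected by $\bS[EG]\sm_{\bSG}(\cdot)$, and that $\bS[EG]$ itself is cofibrant enough as an $\Rtopp$-module for the right-hand hom to be homotopically meaningful. Both are standard consequences of the model-categorical setup of Sections \ref{sec:AppB} and \ref{sec:Koszultop}, so once they are in place the cell argument runs through with no essential difficulty.
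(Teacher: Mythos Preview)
Your argument is circular as written. You invoke Corollary~\ref{cor:kappatop} for the base case $X=Y=\bSG$, but in the paper's logical order that corollary is \emph{deduced from} Proposition~\ref{prop:nu} by specialising to $X=Y=\bSG$; it carries no independent proof. So your base case assumes the very statement you are trying to establish.

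The paper breaks the circle by choosing a different base case: $X=\bSG$, $Y=\bS[EG]$. That case is settled directly by the elementary calculation of Lemma~\ref{lem:ASSintoinj} (the injective case of the Adams spectral sequence), which needs only that $[G_+,EG_+]^G\cong\Q$ and that $\piG_*$ is nontrivial on it. The induction in $Y$ then does not run over $\bSG$-cells at all; instead it uses the Adams tower to build an arbitrary $Y$ from wedges of suspensions of $\bS[EG]$ in finitely many triangles, and only afterwards runs the cell induction in $X$ from $\bSG$. Your scheme---building $Y$ from $\bSG$-cells---would also go through once the base case is secured and once you verify that $\bS[EG]$ is compact as an $\Rtopp$-module (so the right-hand hom commutes with coproducts in $Y$); but you still owe an independent argument for $\kappa_{top}$. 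The Remark following Corollary~\ref{cor:kappatop} sketches one: an explicit Koszul-type Adams resolution exhibits $\bSG$ as built from $\bS[EG]$ in finitely many steps, reducing again to Lemma~\ref{lem:ASSintoinj}. Either way, the substantive input is that lemma, not the corollary you cite.
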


\begin{proof}
First note that,  for each fixed $Y$,  the class of $X$ for 
which $\nu_{X,Y}$ is an equivalence is closed under completion of triangles
and arbitrary wedges.  Next note that $\nu_{X,Y}$ is an equivalence 
when $X=\bSG$ and $Y=\bS [EG]$ by the argument of 
Lemma \ref{lem:ASSintoinj}. Since $\bSG$ is small, it follows that
$\nu$ is an equivalence for $X=\bSG$ when $Y$ is an arbitrary 
wedge of suspensions of modules $\bS [EG]$. It follows that for these $Y$, the 
map $\nu$ is an equivalence for  arbitrary $X$.

Next, note that,  for each fixed $X$,  the class of $Y$ for 
which $\nu_{X,Y}$ is an equivalence is closed under completion of triangles.  
The construction of  Adams
resolutions shows that an arbitrary $\bSG$-module is built from 
wedges of suspensions of $\bS [EG]$ using 
finitely many triangles, so it follows that $\nu$ is an equivalence
for arbitrary $X$ and $Y$ as claimed. 
\end{proof}

Finally, note that if  we take $X=Y=\bSG$ we obtain the double
centralizer map $\kappa_{top}$, giving the required corollary. 

\begin{cor}
\label{cor:kappatop}
The map
$$\kappa_{top} :
\bSG=F_{\bSG}(\bSG, \bSG) \stackrel{\simeq}\lra 
F_{\Rtopp} (\bS,\bS)=\cE_{top}'$$
is an equivalence.\qqed
\end{cor}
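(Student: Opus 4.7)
The plan is to obtain Corollary~\ref{cor:kappatop} as a direct specialization of Proposition~\ref{prop:nu} with $X = Y = \bSG$. Since the substantive work---establishing that $\nu_{X,Y}$ is a weak equivalence for all pairs $(X,Y)$---has just been carried out by an Adams-tower argument based on realizing injectives via $EG_+$ and reducing to the case $X=\bSG$, $Y=\bS[EG]$, the corollary should follow quickly.

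First I would substitute $X = Y = \bSG$ into Proposition~\ref{prop:nu}, obtaining an equivalence
$$\nu_{\bSG,\bSG}: F_{\bSG}(\bSG,\bSG) \stackrel{\simeq}\lra F_{\Rtopp}(\bS[EG]\sm_{\bSG}\bSG,\; \bS[EG]\sm_{\bSG}\bSG).$$
Next I would identify both sides. The left-hand side is canonically $\bSG$, since $\bSG$ is the unit in $\bSG$-modules. On the right, $\bS[EG]\sm_{\bSG}\bSG = \bS[EG]$, which is by construction a cofibrant replacement of $\bS$ and hence equivalent to $\bS$ as an $\Rtopp$-module; thus the right-hand side is equivalent to $F_{\Rtopp}(\bS,\bS) = \cE_{top}'$.

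The only point requiring attention---and the main (though minor) obstacle in the argument---is to check that the specialized map agrees with the double centralizer map $\kappa_{top}$ defined in Example~\ref{eg:top}. By construction, $\nu_{X,Y}$ is the enriched version of the functor $\bS[EG]\sm_{\bSG}(-)$ on mapping spectra; applied to the identity of $\bSG$, and more generally to left multiplication by an element of $\bSG$, it produces the induced $\Rtopp$-linear endomorphism of $\bS[EG]$. Unravelling the definitions, this is exactly the restriction-of-scalars formula defining $\kappa_{top}$ (the action of $\bSG$ on the $\Rtopp$-module $\bS$), so the two maps coincide and $\kappa_{top}$ is an equivalence as claimed.
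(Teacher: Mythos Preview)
Your proposal is correct and follows precisely the paper's approach: the paper deduces the corollary in one sentence by specializing Proposition~\ref{prop:nu} to $X=Y=\bSG$ and asserting that the resulting map is $\kappa_{top}$. Your additional verification that $\nu_{\bSG,\bSG}$ agrees with the double centralizer map is a welcome elaboration of what the paper leaves implicit.
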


\begin{remark}
The proof of the corollary alone can be considerably simplified, since
we may fix $X=\bSG$, and we may show that $Y=\bSG$
is built from $Y=\bS [EG]$ by giving an explicit Adams 
resolution of $G_+$ modelled on the dual of the Koszul complex
as in \cite[Section 12]{tnq1}. 
\end{remark}

\section{From spectra to chain complexes.}\label{sec:Alg}

In this section we show that the topological model
category $\kcellRtoppmod$ is Quillen equivalent to
the algebraic model category $\kcellRtmod$.  We first
(implicitly) replace $\Rtopp$ by the associated symmetric ring spectrum
using the lax symmetric monoidal comparison functors in \cite{schwede}
or \cite[1.2]{ss-mon}; we do not change the notation for $\Rtopp$ here to 
emphasize the simplicity of changing models of spectra. 
To move from $H\Q$-algebras to rational DGAs
we use the functor  $\Theta$ developed in~\cite{s-alg};
explicitly, in the notation of \cite{s-alg} we have 
$\Theta = Dc (\phi^* N) Z c$. 

\begin{prop} 
The DGA $\Theta \Rtopp$ is weakly equivalent to a commutative rational
differential graded algebra $\Rt$.
\end{prop}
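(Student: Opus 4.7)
The plan is to apply the second author's algebraicization theorem for commutative algebras over $H\Q$, namely \cite[1.2]{s-alg}. By Proposition \ref{comm.ring}, $\Rtopp$ already carries the structure of a commutative ring spectrum in the EKMM setting. Transporting along the lax symmetric monoidal comparison functors of \cite{schwede} (the ``implicit replacement'' alluded to just before the proposition) produces a commutative symmetric $H\Q$-algebra, which we continue to denote $\Rtopp$.

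Applying $\Theta = Dc(\phi^* N)Zc$ then yields an associative rational DGA $\Theta\Rtopp$ whose underlying homotopy type agrees with that of $\Rtopp$ under the Quillen equivalence between $H\Q$-algebras and rational DGAs. Because each of the constituent functors $Dc$, $\phi^* N$, $Zc$ is lax (but not strong) symmetric monoidal, $\Theta\Rtopp$ is, a priori, only an associative DGA; the commutativity present upstairs is not automatically preserved in the naive sense.

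The rectification is exactly what \cite[1.2]{s-alg} supplies: rationally, the Quillen equivalence between $H\Q$-algebras and rational DGAs refines to a Quillen equivalence between commutative $H\Q$-algebras and commutative rational DGAs. Applied to the commutative $H\Q$-algebra $\Rtopp$, this produces a commutative rational DGA $\Rt$ together with a zig-zag of DGA weak equivalences linking $\Theta\Rtopp$ and $\Rt$, which is the conclusion sought.

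The substantive obstacle is entirely packaged in the cited result: to go from a DGA that is merely associative (and happens to come from a commutative spectrum) to an actually commutative DGA in the same weak equivalence class. This step relies essentially on the rational hypothesis, since integrally the distinction between $E_\infty$ and strictly commutative DGAs is genuine, whereas over $\Q$ the two notions coincide up to quasi-isomorphism. Once \cite[1.2]{s-alg} is invoked, no further work is required.
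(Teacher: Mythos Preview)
Your proposal is correct and follows exactly the paper's approach: the paper's proof is the single sentence ``This follows from~\cite[1.2]{s-alg},'' and your write-up is simply an expanded explanation of why that citation suffices, tracing the commutativity of $\Rtopp$ through the lax symmetric monoidal comparison to symmetric spectra and then invoking the rational rectification of \cite[1.2]{s-alg}. No further work is needed.
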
 

\begin{proof}  
This follows from~\cite[1.2]{s-alg}.
\end{proof}

By~\cite[2.15]{s-alg} and~\cite[4.3]{ss1}, we then have the following.

\begin{cor}\label{cor-rqe}
There are Quillen equivalences 
between differential graded $\Rt$-modules and
$\Rtopp$-module spectra.
\[  \Rtopp\modu \simeq_{Q} \Rt\modu . \]
\end{cor}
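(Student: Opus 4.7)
The plan is to assemble the corollary from two inputs cited at the end of the excerpt, applied to the object just constructed in the preceding proposition. The strategy is entirely formal at this point: the hard work has already been done in showing that $\Theta \Rtopp$ is weakly equivalent to a \emph{commutative} DGA $\Rt$, and what remains is to transport module categories across this chain of comparisons.

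First, I would invoke \cite[2.15]{s-alg}, which for any $H\Q$-algebra $A$ produces a Quillen equivalence between $A$-module spectra and $\Theta A$-modules in rational chain complexes. Specialising to $A = \Rtopp$ (regarded as a symmetric ring spectrum via the lax symmetric monoidal comparison of \cite{schwede} and \cite[1.2]{ss-mon} as noted above the proposition), this yields a Quillen equivalence
\[
\Rtopp\modu \;\simeq_Q\; \Theta \Rtopp \modu .
\]
Note that here $\Theta\Rtopp$ is a priori only a DGA, not necessarily commutative; but that does not matter for this step, since \cite[2.15]{s-alg} applies to associative algebras.

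Second, the preceding proposition supplies a weak equivalence of DGAs between $\Theta\Rtopp$ and the commutative DGA $\Rt$. I would then quote \cite[4.3]{ss1}, which states that a weak equivalence of DGAs induces a Quillen equivalence between the associated module categories via restriction and extension of scalars. This gives
\[
\Theta\Rtopp\modu \;\simeq_Q\; \Rt\modu .
\]
Composing the two Quillen equivalences yields the desired zig-zag $\Rtopp\modu \simeq_Q \Rt\modu$.

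There is essentially no obstacle here: both references are black boxes that apply directly, and the only mild subtlety is being careful that the weak equivalence supplied by the preceding proposition may be a zig-zag rather than a single map of DGAs. This is harmless, since \cite[4.3]{ss1} can be applied at each stage of the zig-zag and the composite of Quillen equivalences is a Quillen equivalence. The statement as written (``Quillen equivalences'' in the plural) already anticipates this zig-zag format.
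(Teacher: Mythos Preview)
Your proposal is correct and matches the paper's approach exactly: the paper simply cites \cite[2.15]{s-alg} and \cite[4.3]{ss1} and states the corollary, which is precisely the two-step argument you have written out. Your remark about the zig-zag and the plural ``Quillen equivalences'' is also on point.
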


We now apply Proposition~\ref{prop-cell-qe} to
the zig-zag of Quillen equivalences between $\Rtopp$-module
spectra and DG $\Rt$-modules from Corollary~\ref{cor-rqe}.   
We want to find  the algebraic equivalent of the $k$-cellularization
of $\Rtopp\modu$.  
Define $\kt$ to be the image of $k$ under the various derived Quillen 
functors involved in the zig-zag of equivalences between $\Rtopp$ and $\Rt$
modules.  This will involve some extension or restriction functors
across weak equivalences and functors from~\cite{s-alg}. 

\begin{cor}
The $k$-cellularization of $\Rtopp$-module spectra is
Quillen equivalent to the $\kt$-cellularization of $\Rt$. 
\[ \mbox{$k$-cell-$\Rtopp\modu$} \simeq_{Q} \mbox{$\kt$-cell-$\Rt\modu$}\]
\end{cor}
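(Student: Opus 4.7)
The plan is to invoke Proposition~\ref{prop-cell-qe} (the general lifting principle for cellularizations along a Quillen equivalence) applied to the zig-zag from Corollary~\ref{cor-rqe}. The heart of the argument is entirely formal once the correspondence of distinguished cells is tracked: cellularization of a model category at a chosen object only depends on that object up to weak equivalence and on the ambient Quillen equivalence, so a zig-zag of Quillen equivalences cellularizes to a zig-zag of Quillen equivalences whenever the chosen cells correspond under the derived functors.

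First I would recall that $\kt$ was defined precisely as the image of $k$ under the derived functors of the zig-zag between $\Rtopp\modu$ and $\Rt\modu$. Each intermediate step is of one of two kinds: a restriction-extension of scalars Quillen equivalence along a weak equivalence of ring objects (whose derived functors preserve the underlying homotopy type), or an instance of the $H\Q$-algebra/DGA comparison functor $\Theta$ of~\cite{s-alg}. In each case the derived left and right adjoints send the chosen cell on one side to an object weakly equivalent to the chosen cell on the other, essentially by construction.

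Next I would unpack the cellular model structures. On both sides cofibrations and fibrations are inherited from the underlying module category, while the weak equivalences are those maps $g$ for which $[\Sigma^n k,fg]$ (respectively $[\Sigma^n \kt, fg]$) is an isomorphism for all $n$. Since each right Quillen functor in the zig-zag preserves fibrations between fibrant objects and carries the distinguished cell on its source to something weakly equivalent to the distinguished cell on its target, it preserves cellular weak equivalences. Dually each left Quillen functor preserves cofibrations and takes the cell to the cell, so it takes cellular cofibrant generators to cellular cofibrant generators. Thus each step of the zig-zag descends to a Quillen equivalence between the cellularized categories, and composing gives the stated equivalence.

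The main obstacle, or rather the only real check, is verifying the hypotheses of Proposition~\ref{prop-cell-qe} at each intermediate step; this reduces to the bookkeeping of tracking $k$ through $\Theta$ and the scalar-change functors and confirming that it remains compact and cofibrant (up to functorial replacement) at each stage, which in turn follows from the fact that each Quillen equivalence in the zig-zag is between module categories over weakly equivalent rings with well-behaved cofibrant replacements. Once this is in hand, the corollary follows immediately.
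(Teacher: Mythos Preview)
Your proposal is correct and follows the same approach as the paper: apply Proposition~\ref{prop-cell-qe} to the zig-zag of Quillen equivalences from Corollary~\ref{cor-rqe}, using that $\kt$ is by definition the image of $k$ under the derived functors. The paper's argument is just this citation; your third paragraph re-derives part of what Proposition~\ref{prop-cell-qe} already provides, and the compactness check you mention is not actually a hypothesis of that proposition, but neither point affects correctness.
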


\section{Models of the category of torsion $H^*(BG)$-modules.}
\label{sec:CMtorsRmod}

\newcommand{\Rtmodp}{\mbox{$\Rt$-mod}_p}
\newcommand{\Rtmodi}{\mbox{$\Rt$-mod}_i}
\newcommand{\kkcellRtmodp}{\mbox{$k$-cell-$\Rt$-mod}_p}
\newcommand{\Ramodp}{\mbox{$\Ra$-mod}_p}
\newcommand{\Ramodi}{\mbox{$\Ra$-mod}_i}
\newcommand{\kkcellRamodp}{\mbox{$k$-cell-$\Ra$-mod}_p}
\newcommand{\kkcellRamodi}{\mbox{$k$-cell-$\Ra$-mod}_i}
\newcommand{\Rmodp}{\mbox{$R$-mod}_p}
\newcommand{\Rmodi}{\mbox{$R$-mod}_i}
\newcommand{\torsRamodi}{\mbox{tors-$\Ra$-mod}_i}
\newcommand{\Gammam}{\Gamma_{\fm}}

In this section we consider the category of $R$-modules, where 
$R$ is a commutative DGA over $k$ with $H^*(R)=\HBG$ 
(for example $R=\HBG$ or $R=\Rt$ with $k=\Q$).
More precisely, we need to consider models for the category of $R$-modules
with torsion homology. In general we can obtain a model by cellularizing
a model of all $R$-modules with respect to the residue field $k$;
 if $R$ has zero differential there is an alternative model with underlying
category consisting of the DG torsion modules. We will show that the various
models are Quillen equivalent.

\subsection{An algebraic template.}
\label{subsec:algexample}

We begin with an overview of a simple and explicit example. We give detailed 
proofs in Subsection \ref{subsec:piequiv} and \ref{subsec:rigidity} 
below after adapting it to the general case.

Consider the commutative ring $R=k[c]$ with $c$ of degree $-2$ 
(this corresponds to the case when $G$ is the circle group).  
We may consider the abelian category $\Rmod$ of DG $R$-modules, 
and  the abelian subcategory $\torsRmod$ of DG $R$-modules
$M$ with $M[1/c]=0$. These two categories are related by inclusion
and its right adjoint $\Gamma_c$ where $\Gamma_c M$ is the
submodule of elements annihilated by some power of $c$ 
$$\adjunction{i}{\torsRmod}{\Rmod}{\Gamma_c}.$$
We will put model structures on the  two categories so that this
becomes a Quillen equivalence.  On the other hand, there is a
second model structure on $\Rmod$ that  arises by 
comparison with topology (i.e., free rational $\T$-spectra when 
$k =\Q$); the two structures on $\Rmod$ are Quillen equivalent.

We begin by describing the  two relevant model structures on $\Rmod$: 
a projective model and an injective model. Both of them have the same
weak equivalences, and these are detected by the basic 0-cell
$$\uk =\cone(c: \Sigma^{-2}R \lra R)$$
which is an exterior algebra over $R$ on a class in degree $-1$
whose differential is multiplication by $c$.
The basic 0-cell $\uk$ has homology  the 
residue field $k$, but (unlike $\HomR (k , \cdot)$)  
the functor $\Hom_{R} (\uk, \cdot)$ preserves homology isomorphisms
since $\uk$ is free as an $R$-module.
The basic 0-cell $\uk$ embeds in the contractible object $C\uk$, which 
is the  mapping cone of the identity map of $\uk$. 

The projective model structure, $\kcellRmodp$, is cofibrantly generated, taking
the set of generating cofibrations $I$ to consist of suspensions of
$\uk \lra C\uk$, and the
 set of generating acyclic cofibrations $J$ to consist of suspensions of
$0 \lra C\uk$. The weak equivalences are maps $X \lra Y$ with 
the property that $\HomR(\uk, X) \lra \HomR (\uk, Y)$ is a homology 
isomorphism. The fibrations are surjective maps, and the cofibrations
are injective maps which are retracts of relative $I$-cellular objects.
We have been explicit, but this model structure is an instance of 
a general construction: it is the 
$\uk$-cellularization  of the usual projective model on $R$-modules
as in Proposition~\ref{prop 5.5}. 

For the injective model structure, $\kcellRmodi$, the weak equivalences are again  
maps $X \lra Y$ with the property that $\HomR(\uk, X) \lra \HomR (\uk, Y)$ 
is a homology isomorphism. We will not give explicit descriptions
of either i-fibrations or i-cofibrations, but the i-cofibrations are in 
particular monomorphisms and the i-fibrations are in 
particular surjections with kernels which are injective $R$-modules. 
This model structure is the $\uk$-cellularization  of the usual
injective model on $R$-modules.

The injective and projective model structures are Quillen equivalent 
using the identity functors. Using,  Proposition~\ref{prop-cell-qe} 
this follows by cellularizing the usual 
equivalence between projective and injective models
on the category of all DG $R$-modules. 

For $\torsRmod$ we must do something slightly different, since
$\HomR (k, \cdot)$ does not preserve homology isomorphisms, 
and $\uk$ is not in $\torsRmod$. Instead, weak equivalences are homology 
isomorphisms, cofibrations are monomorphisms, and the fibrant objects
are those which are $c$-divisible. Coproducts preserve torsion modules, 
so coproducts in the category $\torsRmod$ are familiar. However, the
product in $\torsRmod$ of a set $M_i$ of torsion modules is
$\Gamma_c \prod_i M_i$. This model structure is given in
\cite[Appendix B]{s1q}, but it also follows from the more general 
discussion in  Subsection \ref{subsec:itorsequiv}. 

Now the $(i, \Gamma_c)$ adjoint pair gives a Quillen equivalence between 
the injective model structure on $\Rmod$ and the model structure on 
$\torsRmod$. The point is that i-fibrant $R$-modules are already torsion
modules.

\begin{summary}
We have elementary Quillen equivalences
$$\torsRmod \simeq_Q \kcellRmodi  \simeq_Q \kcellRmodp .$$
\end{summary}

\subsection{The Koszul model of the residue field.}

We now return to the category of $R$-modules where $R$ is a commutative
DGA over $k$ with $H^*(R)=H^*(BG)$.
To get good control over the model structure, it is useful to choose
a good model $\uk$ of the residue field $k$ so that  
$\HomR(\uk, \cdot)$ preserves homology isomorphisms.

We use a standard construction from commutative algebra. If $B$ 
is a graded commutative ring with elements $x_1, \ldots , x_r$ 
we may form the Koszul complex
$$K(x_1,\ldots , x_n)=(\Sigma^{|x_1|}B \stackrel{x_1}\lra B) \tensor_B
\cdots \tensor_B (\Sigma^{|x_n|}B \stackrel{x_r}\lra B),  $$
which is finitely generated and free as a $B$-module. 
If $R$ is a commutative DGA this needs to be adapted slightly;
to build the one-element Koszul object we suppose given a cycle $x\in R$
and then define
$$K(x):=\cone (\Sigma^{|x|}R \stackrel{x}\lra R). $$
Up to equivalence this depends
only on the cohomology class of $x$. In our case $R$ has polynomial 
cohomology, and we  pick cycle 
representatives $x_i$ for the polynomial generators, and 
 form 
$$K(x_1,\ldots, x_n)=K(x_1)\tensorR K(x_2)\tensorR \cdots \tensorR K(x_r).$$
We note that this may also be formed from $R$ in $r$ steps by taking fibres.
Indeed, if we write $K_i=K(x_1,\ldots, x_i)$, then we have $K_0=R$ 
and there is a cofibre sequence
$$\Sigma^{|x_i|}K_{i-1}\stackrel{x_i}\lra K_{i-1}\lra K_i$$
for $i=1,2,\ldots r$.

\begin{defn}
\label{defn:flatbasiccell}
The Koszul  model of $k$ is 
the DG-$R$-module $\uk=K(x_1,x_2,\ldots , x_r)$
for a chosen set of cocycle representatives of the polynomial generators.
\end{defn}

The good behaviour of $k$-cellularization is based on compactness.

\begin{lemma}\label{lem-compact}
The basic 0-cell $\uk$ is compact in the sense that 
$$\bigoplus_i \HomR (\uk, M_i) \lra \HomR (\uk, \bigoplus_i M_i)$$
is an isomorphism for any $R$-modules $M_i$. 
\end{lemma}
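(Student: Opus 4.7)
The plan is to exploit the fact that $\uk = K(x_1,\ldots,x_r)$ is finitely built from $R$ by iterated cofibre sequences, and that the statement is trivial for $R$ itself.

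First I would record the base case. For $M_i$ an arbitrary family of $R$-modules, the evaluation isomorphism $\HomR(R,M) \cong M$ (and more generally $\HomR(\Sigma^n R,M) \cong \Sigma^{-n}M$) is natural and manifestly commutes with direct sums, so the claim holds when $\uk$ is replaced by any suspension of $R$. Since a finite direct sum of such objects still enjoys this property, the claim holds for any finite wedge of suspensions of $R$.

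Next I would run an induction on $i$, using the cofibre sequences
$$\Sigma^{|x_i|}K_{i-1} \stackrel{x_i}\lra K_{i-1} \lra K_i$$
that build $\uk = K_r$ from $K_0 = R$. Applying $\HomR(-,\bigoplus_j M_j)$ and $\bigoplus_j\HomR(-,M_j)$ to this cofibre sequence in its first variable yields two long exact sequences in homology (equivalently, fibre sequences of chain complexes), connected by the natural comparison map. Since direct sums are exact in $k$-modules, the comparison map for $\bigoplus_j\HomR(-,M_j)$ still fits into a long exact sequence. The inductive hypothesis gives isomorphisms at the $K_{i-1}$ terms (and their suspensions), and the five lemma then delivers an isomorphism at $K_i$.

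Alternatively, and perhaps more transparently, one may observe that as a graded $R$-module (forgetting the differential) $\uk$ is finitely generated and free: each $K(x_i)$ is the cone on a self-map of $R$, hence free of rank $2$, and $\uk$ is an iterated tensor product of these, so free of rank $2^r$. Consequently $\HomR(\uk,M)$ is, as a graded abelian group, a finite direct sum of shifted copies of $M$, with a differential that only twists by $d_{\uk}$ and $d_M$. Finite direct sums commute with arbitrary direct sums, and the twisting is natural in $M$, so the comparison map is an isomorphism of chain complexes. There is no real obstacle here; the only point requiring care is to keep track that the Hom is the internal DG Hom and that the underlying graded structure is what controls the compatibility with coproducts.
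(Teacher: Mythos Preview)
Your alternative approach is exactly the paper's proof: the authors simply observe that as an $R$-module $\uk$ is free on $2^r$ generators, so a map $f:\uk\to M$ is determined by its values on these generators, and the comparison with $\bigoplus_i M_i$ is then immediate. Your care about the internal DG $\Hom$ and the twisting by differentials is appropriate but, as you note, causes no trouble.

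The inductive argument via the cofibre sequences $\Sigma^{|x_i|}K_{i-1}\to K_{i-1}\to K_i$ is a valid alternative and has the virtue of generalizing to any object finitely built from $R$, not just a free one. One small caution: as you phrase it, invoking long exact sequences in homology and the five lemma only yields that the comparison map is a \emph{quasi}-isomorphism, whereas the lemma as stated (and as used) records a strict isomorphism of complexes. If you want the strict statement from the inductive route, use instead the short exact sequences of complexes $0\to\HomR(\Sigma^{|x_i|+1}K_{i-1},-)\to\HomR(K_i,-)\to\HomR(K_{i-1},-)\to 0$ coming from the cone construction (exact because $K_{i-1}$ is free as a graded $R$-module), and apply the five lemma there.
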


\begin{proof}
As an $R$-module $\uk$ is free on $2^r$ generators.
The value of a map $f: \uk \lra M$ is determined by its values
on these generators.
\end{proof}

The Koszul model is designed to preserve weak equivalences, and
we need to know that a weak equivalence of torsion modules is a 
homology isomorphism.

\begin{lemma}
\label{Flatcellspreserveacyclics}
The Koszul model $\uk$ of $k$ preserves homology isomorphisms in the
sense that $\HomR(\uk, \cdot)$ takes homology isomorphisms
to homology isomorphisms.
\end{lemma}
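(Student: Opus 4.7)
The plan is to induct on $i = 0, 1, \ldots, r$ to show that $\HomR(K_i, -)$ takes homology isomorphisms to homology isomorphisms; the case $i = r$ gives $\uk = K_r$ and is the asserted statement. The base case $i = 0$ is immediate, since $K_0 = R$ and $\HomR(R, -)$ is just the identity functor on the underlying chain complex.

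For the inductive step I would exploit the fact that, as a graded $R$-module, the mapping cone splits: $K_i \cong K_{i-1} \oplus \Sigma^{|x_i|+1} K_{i-1}$, with the differential twisted by multiplication by $x_i$. Consequently the defining cofibre sequence
$$\Sigma^{|x_i|} K_{i-1} \stackrel{x_i}\lra K_{i-1} \lra K_i$$
enhances to a short exact sequence
$$0 \lra K_{i-1} \lra K_i \lra \Sigma^{|x_i|+1} K_{i-1} \lra 0$$
of DG $R$-modules that is split as graded $R$-modules. Applying $\HomR(-, M)$ therefore preserves short exactness and produces, for every $R$-module $M$, a short exact sequence of chain complexes and thus a long exact sequence in homology. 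Given a homology isomorphism $f : M \lra N$, the inductive hypothesis says that $\HomR(K_{i-1}, f)$ and $\HomR(\Sigma^{|x_i|+1} K_{i-1}, f)$ are homology isomorphisms, so the five lemma yields the same conclusion for $\HomR(K_i, f)$.

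There is no serious obstacle here; the only point requiring attention is the graded-split structure of the cone, which is immediate from the definition of $\cone$. Equivalently, one can observe that $\uk$ is a finite cell $R$-module built from $R$ by $r$ successive cofibre attachments along the cycles $x_i$, hence cofibrant in the projective model structure on DG $R$-modules; $\HomR(\uk, -)$ is then a right Quillen functor to chain complexes and so preserves weak equivalences between (automatically fibrant) $R$-modules.
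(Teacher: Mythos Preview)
Your proof is correct and follows essentially the same line as the paper's: both argue that $\uk$ is built from $R$ by $r$ successive cofibre sequences, so $\HomR(\uk,M)$ is built from $M$ by $r$ cofibre sequences and hence acyclicity (equivalently, homology isomorphisms) is preserved. You are simply more explicit about the inductive step, spelling out the graded-split short exact sequence and the five lemma, and you add the model-categorical gloss (cofibrancy of $\uk$ in the projective model structure) as an alternative; the paper's proof compresses all of this into the observation that mapping cofibres preserve acyclics.
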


\begin{proof} The object $\uk$ is formed from $R$ in $r$ steps
by taking mapping fibres. The object $\HomR (\uk, M)$ 
is therefore formed from $M=\HomR (R,M)$ by finitely many operations of taking
mapping cofibres. If $M$ is acyclic, so too is $\HomR (\uk,M)$.
\end{proof}

\begin{lemma}
\label{lem:kacycisacyc}
If $X$ and $Y$ have torsion homology then a map $f:X\lra Y$ is 
a homology isomorphism if and only if $\HomR(\uk,X)\lra \HomR(\uk,Y)$
is a homology isomorphism.
\end{lemma}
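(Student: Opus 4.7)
The ``only if'' direction is immediate: apply $\HomR(\uk, -)$ to the fibre triangle of $f$ and invoke Lemma \ref{Flatcellspreserveacyclics}.

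For the ``if'' direction, let $Z$ denote the fibre of $f$.  The long exact sequence in homology presents $H^*(Z)$ as built from subquotients of the torsion modules $H^*(X)$ and $H^*(Y)$, so $H^*(Z)$ is torsion.  Since $\HomR(\uk,-)$ takes triangles to triangles, $\HomR(\uk,Z)$ is acyclic by hypothesis.  It therefore suffices to show that if $H^*(M)$ is $\mathfrak{m}$-power torsion over $H^*(R)=\HBG$ and $\HomR(\uk,M)\simeq 0$, then $M\simeq 0$.

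My plan is to exploit the stepwise construction $\uk = K_r$ with $K_0 = R$ and the cofibre triangles $\Sigma^{|x_i|}K_{i-1}\stackrel{x_i}\lra K_{i-1}\lra K_i$ described in Section \ref{sec:CMtorsRmod}.  Setting $M_i:=\HomR(K_i,M)$, so that $M_0=M$ and $M_r=\HomR(\uk,M)$, the contravariant functor $\HomR(-,M)$ converts these into fibre triangles $M_i \lra M_{i-1} \stackrel{x_i}\lra \Sigma^{-|x_i|}M_{i-1}$.  A forward induction on $i$ using the associated long exact sequences and the closure of torsion modules under extensions shows that each $H^*(M_i)$ is $\mathfrak{m}$-power torsion.

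The argument concludes by a downward induction starting from the hypothesis $H^*(M_r)=0$.  Given $H^*(M_i)=0$, the long exact sequence of the $i$-th triangle forces multiplication by $x_i$ to be an isomorphism on $H^*(M_{i-1})$; but in any $\mathfrak{m}$-power torsion module every element is killed by some power of $x_i$, so invertibility of $x_i$ forces $H^*(M_{i-1})=0$.  Iterating $r$ steps yields $H^*(M)=0$, as required.  The only delicate input is the correct interpretation of ``torsion'' as $\mathfrak{m}$-power torsion in the graded polynomial ring $\HBG$; this is precisely what makes the final ``invertible versus locally nilpotent'' contradiction effective at each stage of the downward induction.
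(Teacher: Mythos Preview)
Your proof is correct and follows essentially the same route as the paper: reduce to a single object with torsion homology (you use the fibre, the paper uses the mapping cone), then run the downward induction along the Koszul filtration $K_0=R,\ldots,K_r=\uk$, using at each step that $x_i$ acts invertibly on $H_*(\HomR(K_{i-1},M))$ while that module is $x_i$-power torsion. Your explicit forward induction establishing that each $H_*(M_i)$ is torsion is a slightly more careful version of the paper's one-line assertion to the same effect.
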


\begin{proof}
Considering the mapping cone $M$ of $f$, it suffices to show that if $M$ 
has torsion 
homology then $H_*(M)=0 $ if and only if $H_*(\HomR(\uk, M))=0$. 
It follows from \ref{Flatcellspreserveacyclics} that if $M$ 
is acyclic then so is $\HomR(\uk,M)$.

Conversely, suppose that $\HomR (\uk, M)$ is acyclic and $H_*(M)$ is torsion. 
Let $K_i=K(x_1,\ldots, x_i)$, and argue by downward induction on $i$ that
$\HomR (K_i,M)$ is acyclic. The hypothesis states that this is true
for $i=r$ and the conclusion is the statement that it is true for $i=0$.

Suppose then that $\HomR(K_i,M)$ is acyclic. Since $K_i $ is the fibre
of $x_i : K_{i-1} \lra K_{i-1}$, we conclude that 
$x_i$ is an isomorphism of $H_*(\HomR (K_{i-1},M))$. However, since 
$H_*(M)$ is torsion, so is  $H_*(\HomR (K_{i-1},M))$, and hence in particular
it is $x_i$-power torsion. An $\HBG$-module  $H$ for which $x_i:H\lra H$ 
is an isomorphism and $H[1/x_i]=0$ is zero. 
Hence $\HomR (K_{i-1},M)$ is acyclic as required. 
\end{proof}

\subsection{Models of cellular $R$-modules.}
\label{subsec:piequiv}

We now formally introduce the algebraic model structures we use. 
If $R$ is a DGA, we may form the projective model structure $\Rmodp$
(see~\cite[Section 7]{nj} or \cite{ss1}).
The weak equivalences are the homology isomorphisms. It is cofibrantly 
generated by using algebraic disk-sphere pairs. More precisely, we take
$S^{n-1}=S^{n-1}(R)$ to be the $(n-1)$st suspension of $R$, and 
$D^{n}(R)$ to be the mapping cone of its identity map. 
The set $I$ of generating cofibrations
consists of the maps $S^{n-1}\lra D^{n}$ for $n\in \Z$ and the set $J$ of
generating acyclic cofibrations consists of the maps $0\lra D^{n}$ for 
$n\in \Z$. The fibrations in this model are the surjective maps, and
the cofibrations are retracts of relative cell complexes. 

The proof may be obtained by adapting \cite[Start of 2.3]{hovey-model}. 
More precisely, 
$D^n$ is still right adjoint to taking the degree $n$ part and
$S^{n-1}$ is still right adjoint to taking degree $n-1$ cycles, so that
the proof that it is a model structure (i.e., until the end of the proof
of \cite[2.3.5]{hovey-model}) is unchanged. 
The analogue of \cite[2.3.6]{hovey-model}
states that any 
cofibrant object is projective if the differential is forgotten, and
that any DG-module $M$ admitting an inductive filtration 
$$0=F_0M \subseteq F_1M\subseteq F_2M\subseteq \cdots \subseteq \bigcup_n
F_nM=M$$
with subquotients consisting of sums of projective modules (i.e., a 
{\em cell $R$-module} in the sense of \cite{KM}) is cofibrant. The proof
of projectivity is unchanged, and the cofibrancy of cell modules is 
just as for CW-complexes (i.e., via the homotopy extension and lifting property
(HELP)
as in \cite[2.2]{KM}). Arbitrary cofibrant objects are retracts of cell 
$R$-modules. Cofibrations $i:A\lra B$ are retracts of relative cell 
modules.

The model category $\kcellRmodp$ is obtained from $\Rmodp$ by cellularizing
with respect to $k$ in the sense of \cite{hh}. The Koszul model
$\uk$ gives a convenient cofibrant model for $k$, and since all objects
are fibrant, the weak equivalences are the maps $p:X\lra Y$ for 
which $\HomR (\uk , p)$ is a homology isomorphism, the fibrations are the 
surjective maps as before. The model structure is cofibrantly generated, and
the sets $I$ and $J$ can be formed from $\uk =S^0(\uk)$ in the same way
that $I$ and $J$ were formed from $R=S^0(R)$ above. 

Similarly, in the special case that $R$ has zero differential, 
which is the only case we will need, 
the injective model $\Rmodi$ is formed as in \cite[2.3.13]{hovey-model}.
In fact Hovey only deals with rings and chain complexes, but it applies 
to a graded algebra $R$ with zero differential and DG modules over it.
The weak equivalences are the $H_*$-isomorphisms, and the 
cofibrations are the injective maps. The fibrations are the surjections 
with fibrant kernel.
We will not identify the fibrant objects explicitly, but they are in 
particular injective as modules. For the converse, we say that
 $M$ is {\em cocellular} if there is a filtration 
$$M=\ilim_n F^nM \lra \cdots \lra F^2M \lra F^1M \lra F^0M=0$$
where the kernels $K^n=\ker(F^nM \lra F^{n-1}M)$ are injective modules
with zero differential. A cocellular $R$-module is fibrant. 
The two places where it is necessary to adapt the
proof are the proof of \cite[2.3.17]{hovey-model} where bounded above modules
are replaced by cocellular modules (and the proof with elements replaced
by applications of the HELP), and in \cite[2.3.22]{hovey-model}. The zero 
differential
allows us to treat any $R$-module as a DG-module with zero differential.

We can form $\kcellRmodi$ by cellularizing
with respect to $k$ in the sense of \cite{hh}. In this case
$k$ is itself cofibrant, so the standard description of the 
weak equivalences is that they are maps $p:X\lra Y$ for which 
$\HomR (k , p')$ is a homology isomorphism, where $p'$ is a fibrant 
approximation of $p$; this is a little inconvenient, so we use the 
equivalent condition that $\HomR (\uk , p)$ is a homology isomorphism.
The fibrations are the same as for $\Rmodi$, and the cofibrations are
as required by the lifting property.

It is immediate from the above description of the model structures that we
have a Quillen equivalence 
$$\Rmodp\stackrel{\simeq}\lra\Rmodi$$
using the identity maps. By cellularizing this, see 
Proposition~\ref{prop-cell-qe}, we obtain a Quillen equivalence
$$\kcellRmodp\stackrel{\simeq}\lra\kcellRmodi.$$

\subsection{A model structure on torsion modules.}
\label{subsec:itorsequiv}

Finally, we consider a polynomial ring $\Ra$ on even degree generators
$x_1,\ldots , x_r$. A torsion module
is one for which every element is annihilated by a power of the 
augmentation ideal $\fm$ (or equivalently, it is annihilated by some power
of each element of  $\fm$).  There is an adjunction
$$\adjunction{i}{\torsRmod}{\Rmod}{\Gammam}, $$
where $\Gammam M$ is the submodule of elements annihilated by 
some power of the augmentation ideal $\fm$.

\begin{lemma}
The category $\torsRamod$ of DG torsion 
modules admits an injective model structure with
weak equivalences the  homology isomorphisms and cofibrations which 
are the injective maps. The i-fibrations are the surjective maps with 
i-fibrant kernel. 
\end{lemma}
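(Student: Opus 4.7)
The plan is to construct this model structure by direct verification of Quillen's axioms, exploiting that $\torsRamod$ is a Grothendieck abelian category with enough injectives. The construction parallels the injective model $\Rmodi$ from Subsection~\ref{subsec:piequiv}, with the key modification that limits must be replaced by their maximal torsion subobjects via $\Gammam$.

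First I would verify that $\torsRamod$ is complete and cocomplete. Colimits coincide with those in $\Rmod$ since filtered colimits and cokernels of maps of torsion modules remain torsion. Limits are computed by applying $\Gammam$ to the corresponding limit in $\Rmod$, using that $\Gammam$ is right adjoint to the fully faithful inclusion $i : \torsRamod \to \Rmod$. The weak equivalences (homology isomorphisms) trivially satisfy 2-out-of-3 and closure under retracts, and cofibrations (monomorphisms) are closed under pushouts, transfinite composition, and retracts.

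Next I would produce generating sets suitable for the small object argument. A set $I$ of generating cofibrations is given by sphere-disk pairs $S^{n-1}(M) \to D^n(M)$ where $M$ ranges over a set of representatives of torsion $\Ra$-modules of bounded cardinality, sufficient to detect monomorphisms in $\torsRamod$ (for example, all suspensions of the quotients $\Ra/\fm^N$). A set $J$ of generating acyclic cofibrations is given by the maps $0 \to D^n(M)$ for the same class of $M$; these are trivial cofibrations because $D^n(M)$ is contractible. Applying the small object argument then yields the two required functorial factorizations, and defines the fibrations as those maps with the right lifting property against $J$.

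The main obstacle is identifying the resulting fibrations as precisely the surjections with i-fibrant kernel, and checking that $J$-cell complexes exhaust, up to retract, the class of all trivial cofibrations. For this one uses that $\torsRamod$ has enough injectives: injective objects are direct sums of shifts of the injective hull of $k$, namely the basic injective $I = H^*(BG)$ in appropriate degrees (cf.~Lemma~\ref{piGegisI}). Given any trivial cofibration $A \hookrightarrow B$ with acyclic cokernel, embedding the cokernel into an injective torsion module and pulling back exhibits $B$ as a retract of a pushout built from generating trivial cofibrations. The characterization of fibrations as surjections with i-fibrant kernel then follows from the short exact sequence identifying the kernel, together with closure of the class of fibrant objects under limits computed in $\torsRamod$.
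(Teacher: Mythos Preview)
Your proposed generating set $J=\{0\to D^n(M)\}$ is wrong, and this is a genuine gap. With this $J$, every map $X\to 0$ lies in $J$-inj (the lifting condition is vacuous), so every object would be $J$-fibrant; but the i-fibrant objects of the injective model structure are exactly the DG-injective torsion complexes, and not every DG torsion $\Ra$-module is of this sort (any complex that is not degreewise injective as an $\Ra$-module already fails). Dually, relative $J$-cell complexes are of the form $A\to A\oplus D$ with $D$ a direct sum of disks, so $J$-cof consists of split monomorphisms with \emph{contractible} cokernel; the acyclic cofibrations of the injective model structure are monomorphisms with merely \emph{acyclic} cokernel, and over a ring of positive homological dimension these classes differ. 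Your last paragraph asserts that embedding the cokernel into an injective exhibits any trivial cofibration as a retract of a $J$-cell, but this only shows the cokernel sits inside something injective, not that it is itself a retract of a sum of disks.

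The paper does not attempt to name explicit small generating sets. It simply observes that Hovey's argument for $\Rmodi$ in \cite[2.3.13]{hovey-model} goes through verbatim once limits are computed via $\Gammam$, or alternatively invokes the four-step method of \cite[Appendix B]{s1q}. In Hovey's proof the generating acyclic cofibrations are a \emph{set of acyclic monomorphisms} $A\hookrightarrow B$ with $|B|$ bounded by a suitable cardinal, and a transfinite filtration argument shows that every acyclic monomorphism is built from these. If you want your outline to succeed, you must replace your $J$ by such a set; the maps $0\to D^n(M)$ are the generating acyclic cofibrations for a \emph{projective}-type model structure, not the injective one.
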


\begin{proof}
One option is to observe that Hovey's proof of \cite[2.3.13]{hovey-model} 
applies to torsion modules. This is exactly as in the previous section 
except that to construct products and inverse limits one forms them in
the category of all $\Ra$-modules and then  applies the right adjoint 
$\Gammam$. Note that this shows in particular that the category of all 
torsion $\Ra$-modules does have enough injectives. 

Another option is to use the method of \cite[Appendix B]{s1q}. For the latter, 
we  need only show that the four steps described there 
can be completed. In fact we can use precisely the same argument, which reduces 
most of the verifications to properties of $\Q$-modules using 
right adjoints to the forgetful functor to vector spaces, together with the 
fact that $H^*(BG)$ is of finite injective dimension.
 The right adjoint is given by coinduction,  and the fact
that enough injectives are obtained in this way follows 
since any torsion module over a polynomial ring embeds in a
sum of copies of the injective hull of the residue field.
\end{proof}

\begin{prop}
The $(i,\Gammam)$-adjunction induces a Quillen equivalence
$$\torsRamodi \stackrel{\simeq}\lra \kkcellRamodi.$$
\end{prop}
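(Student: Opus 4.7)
The plan is to verify the two ingredients of a Quillen equivalence. For the Quillen adjunction itself, I would check that the right adjoint $\Gammam$ preserves fibrations and acyclic fibrations. Since cellularization leaves fibrations unchanged, fibrations in $\kkcellRamodi$ coincide with the i-fibrations of $\Ramodi$, namely surjections whose kernel is cocellular with injective $\Ra$-module layers. Because $\Ra$ is Noetherian, $\Gammam$ sends injectives to injectives and has vanishing higher derived functors on them, so it is exact on i-fibrant kernels and carries such fibrations to fibrations in $\torsRamodi$; the same input handles acyclic fibrations. The unit $X \to \Gammam i X$ is the identity since $iX = X$ is already torsion, so only the counit requires real work.

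Given $Y$ i-fibrant in $\kkcellRamodi$, I must show that the inclusion $\Gammam Y \hookrightarrow Y$ is a weak equivalence, i.e., that $\HomR(\uk, \Gammam Y) \to \HomR(\uk, Y)$ is a homology isomorphism, or equivalently that $\HomR(\uk, Y/\Gammam Y)$ is acyclic. The crucial observation is that, since $x_1, \ldots, x_r$ forms a regular sequence on $\Ra$, the Koszul object $\uk = K(x_1, \ldots, x_r)$ is a finite free resolution of $k$ as an $\Ra$-module; hence $\HomR(\uk, N)$ computes $\RHom_{\Ra}(k, N)$ on the nose for every $N$. An i-fibrant $Y$ admits a cocellular tower whose successive layers are direct sums of indecomposable injectives $E(\fp)$, and since $\Gammam E(\fp)$ equals $E(\fp)$ when $\fp = \fm$ and vanishes otherwise, the tower descends to one on $Y/\Gammam Y$ whose layers involve only $E(\fp)$ with $\fp \neq \fm$. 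The Bass-number input $\RHom_{\Ra}(k, E(\fp)) = 0$ for $\fp \neq \fm$ is then immediate: any map $k \to E(\fp)$ would produce a nonzero $\fm$-torsion element of $\Ra/\fp$, forcing $\fp = \fm$, and the higher $\Ext^i$ vanish because $E(\fp)$ is injective. Induction up the cocellular tower then gives the desired acyclicity.

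The main obstacle will be the inductive passage from the individual layers to the cocellular limit: one must control how $\HomR(\uk, -)$ interacts with the inverse limit in the tower. Finiteness of $\uk$ as a bounded complex of finitely generated free $\Ra$-modules (compare Lemma~\ref{lem-compact}) lets $\HomR(\uk, -)$ commute with the relevant limits and reduces this to a routine Milnor $\lim^1$ argument, using that $\HBG$ has finite injective dimension so the tower has finite length up to equivalence. An equivalent and perhaps cleaner route is to recognize $\HomR(\uk, -)$ as computing derived local cohomology and invoke the vanishing of $R\Gammam$ on $\fm$-torsion-free injectives, which collapses to the same core Bass vanishing calculation.
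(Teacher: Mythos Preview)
Your approach is broadly sound but considerably more elaborate than the paper's, and it leans on an assertion the paper does not establish: you assume an i-fibrant $Y$ actually admits a cocellular tower, whereas the paper only records that cocellular implies fibrant and that fibrant implies injective as a module. Without cocellularity your tower-and-Bass-number induction does not get off the ground (though it could be rescued by passing to a cocellular fibrant replacement). Your closing alternative, reducing to the vanishing of $\Hom_{\Ra}(k,-)$ on $\fm$-torsion-free injectives, is correct and is essentially what the paper does, stripped of the tower apparatus.

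The paper's argument is much shorter. It verifies the Quillen pair exactly as you do (fibrant $\Rightarrow$ injective, so $R^1\Gammam$ vanishes on the kernel and $\Gammam$ preserves the fibration), but for the equivalence it checks directly that $M\to N$ is a $\uk$-equivalence if and only if $M\to\Gammam N$ is a homology isomorphism. The key is the diagram: since $N$ and $\Gammam N$ are injective, the quasi-isomorphism $\uk\to k$ gives $\HomR(\uk,N)\simeq\HomR(k,N)$ and $\HomR(\uk,\Gammam N)\simeq\HomR(k,\Gammam N)$, and the trivial equality $\HomR(k,N)=\HomR(k,\Gammam N)$ finishes the comparison. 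One then invokes Lemma~\ref{lem:kacycisacyc} to translate the $\uk$-equivalence between the torsion modules $M$ and $\Gammam N$ back into a homology isomorphism. No Matlis decomposition, Bass numbers, or inverse-limit control is needed; everything reduces to the single observation that maps out of $k$ automatically land in the torsion part.
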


\begin{proof}
First observe that since the cofibrations are injective maps in both 
cases and $\Gammam $ is right adjoint to inclusion, 
$\Gammam$ takes fibrant objects to fibrant objects.

Next we see that the adjunction is a Quillen pair. The fibrations in 
$\kkcellRamodi$ are the surjective maps $p: M \lra N$ with fibrant kernel $K$. 
Fibrant objects are in particular injective, and hence we obtain a short
exact sequence 
$$0 \lra \Gammam K \lra \Gammam M \lra \Gammam N \lra 0. $$
As observed above, since $K$ is fibrant, so is $\Gammam K$.
Thus $\Gammam$ takes fibrations to fibrations. 

Next, if $p$ is also a weak equivalence then $K$ is weakly contractible. 
Indeed,  from the exact sequence
$$0 \lra K\lra X\lra Y\lra 0$$
we see that $\HomR(\uk,K)$ is acyclic. 
However, 
$$\HomR (\uk, K)\simeq \HomR (k,K)=\Gammam K$$
 and hence $H_*(\Gammam K)=0$ as required. 

Finally, we check that the Quillen pair is a Quillen equivalence. 
For this suppose  $M$ is a torsion module and $N$ is fibrant. 
A map $p:M \lra  N$ is a weak equivalence if and only if the map
$$\HomR (\uk,M) \lra  \HomR (\uk,N)$$
is a homology isomorphism. Now $N$ and $\Gammam N$ are injective, 
so the diagram
$$\begin{array}{ccc}
\HomR (\uk , N)&\stackrel{\simeq}\lla& \HomR (k,N)\\
\uparrow &&\uparrow =\\
\HomR (\uk , \Gammam N)&\stackrel{\simeq}\lla& \HomR (k,\Gammam N)
\end{array}$$
allows us to deduce this is equivalent to the map
$\HomR (\uk, M)\lra \HomR (\uk, \Gammam N)$
being a homology isomorphism, and finally, 
 by Lemma \ref{lem:kacycisacyc}, this is equivalent to requiring  that 
$$M \lra \Gammam N$$
is a homology isomorphism as required.
\end{proof}

\subsection{Rigidity of the category of torsion modules.}
\label{subsec:rigidity}
In this subsection we give the algebraic part of the string of equivalences, 
following the pattern in Subsection \ref{subsec:algexample}.

The outcome of the topological part of the argument is the category
$\ktcellRtmodp$. 
The only thing we know about $\Rt$ is that it is a 
commutative DGA over $\Q$ with $H^*(\Rt)=H^*(BG)$, and the only thing we 
know about the object we have used to cellularize it is that its homology
agrees with that of a free cell.

 However this data is rigid in the following sense. 
\begin{prop}
If $R$ is a commutative DGA over $k$ with cohomology $\Ra$ polynomial on even 
degree generators, and if we cellularize with respect to any compact object
$k'$ with homology $k$, there are Quillen equivalences
$$\kpcellRmodp \simeq \kkcellRamodp\simeq \kkcellRamodi \simeq \torsRamod .$$
\end{prop}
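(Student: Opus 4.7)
The plan is to reduce the proposition to the three earlier results of this section by replacing $R$ with its cohomology $\Ra$. Concretely, I would proceed as follows.

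First, I would invoke intrinsic formality (Lemma \ref{lemma.2.9}) for the polynomial ring $\Ra = H^*(R)$ on even degree generators. Because the generators sit in even degrees, the obstructions to realizing a homology isomorphism vanish, and choosing cycle representatives $x_i \in R$ for the polynomial generators of $\Ra$ produces a homomorphism of commutative DGAs $\phi : \Ra \lra R$ which is a homology isomorphism. By standard results on model categories of modules over a weak equivalence of DGAs (e.g.\ \cite[4.3]{ss1}), restriction and extension of scalars along $\phi$ yield a Quillen equivalence
$$\Rmodp \simeq_Q \Ramodp.$$

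Next, I would cellularize this Quillen equivalence. Let $k''$ be the image of the compact object $k'$ under the derived left adjoint $\phi_* = \Ra \otimes_R^L (-)$. Then $k''$ is again compact in $\Ramodp$, and since derived base change along a homology isomorphism preserves homology, $H_*(k'') \cong H_*(k') \cong k$. The key recognition step (analogous to Lemma \ref{lemma.2.10}) is that any compact $\Ra$-module with homology $k$ is equivalent in the derived category to the Koszul model $\uk$ of Definition \ref{defn:flatbasiccell}: indeed $\uk$ is a finite free resolution of $k$, and any perfect complex whose homology is the residue field is determined up to quasi-isomorphism by its minimal free resolution, which is the Koszul complex. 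Applying Proposition \ref{prop-cell-qe} to cellularize the above Quillen equivalence at the matching objects $k'$ on the topological side and $\uk$ (equivalent to $k''$) on the algebraic side yields the first required equivalence
$$\kpcellRmodp \simeq_Q \kkcellRamodp.$$

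The remaining two equivalences $\kkcellRamodp \simeq_Q \kkcellRamodi \simeq_Q \torsRamod$ are exactly what was established in Subsections \ref{subsec:piequiv} and \ref{subsec:itorsequiv} applied to the polynomial ring $\Ra$: the first is the cellularization of the identity-functor Quillen equivalence between projective and injective model structures on $\Ramod$, and the second is the $(i,\Gammam)$-adjunction identified with a Quillen equivalence from the torsion model structure to the cellularized injective model.

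The main obstacle is the recognition step: arguing that any compact object of $\Ramod$ with homology $k$ is equivalent to the Koszul model $\uk$. The rest of the argument is a straightforward application of formality and cellularization transport. Even this obstacle is mild given that $\Ra$ is a polynomial ring on even degree generators, so $k$ admits a finite, minimal free resolution and perfect complexes representing it are essentially unique up to quasi-isomorphism.
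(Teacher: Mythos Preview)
Your proposal is correct and follows essentially the same route as the paper: invoke intrinsic formality (Lemma~\ref{lemma.2.9}) to obtain a quasi-isomorphism $\Ra\to R$ of commutative DGAs, transport the cellularization object across the resulting Quillen equivalence via Proposition~\ref{prop-cell-qe}, recognize that any module with homology $k$ is equivalent to the Koszul model (Lemma~\ref{lemma.2.10}), and then quote the equivalences of Subsections~\ref{subsec:piequiv} and~\ref{subsec:itorsequiv}.

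Two small remarks. First, a notational slip: since $\phi:\Ra\to R$, the left Quillen functor is extension of scalars $R\otimes_{\Ra}(-):\Ramodp\to\Rmodp$, so to transport $k'\in\Rmodp$ to the $\Ra$-side you should use the right adjoint (restriction along $\phi$), not ``$\Ra\otimes_R^L(-)$''; this is exactly case~(ii) of Proposition~\ref{prop-cell-qe}. The conclusion that the transported object still has homology $k$ is unaffected. Second, your recognition argument via minimal free resolutions is plausible but somewhat informal; the paper's Lemma~\ref{lemma.2.10} gives a direct and elementary construction of a quasi-isomorphism $\uk\to M$ by inductively extending along the Koszul cofibre sequences $\Sigma^{|x_i|}K_{i-1}\to K_{i-1}\to K_i$, which avoids any appeal to uniqueness of minimal models and works over the DGA $R$ itself, not just over $\Ra$.
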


\begin{proof}
We have described the model structures on all four categories.

For the first equivalence we have a well-known observation. 
\begin{lemma}\label{lemma.2.9}
A polynomial ring $\Ra$ is intrinsically formal amongst commutative
DGAs over $k$ in the sense that there is a homology isomorphism
$\Ra \lra R$.
\end{lemma}

\begin{proof}
We choose representative cycles for the polynomial generators. 
Since $R$ is commutative we may use them 
to define a map $\Ra \lra R$
of DGAs by taking each polynomial generator to the representative cycle.
 By construction it is a homology isomorphism. 
\end{proof}

Extension and
restriction of scalars therefore define a Quillen equivalence
$$\Rmodp\simeq \Ramodp. $$

We next need to cellularize this equivalence. On the left we use an 
object with homology equal to $k$, and the corresponding object on the 
right has the same property. The second ingredient in rigidity is that
 this characterizes the objects up to equivalence.

\begin{lemma}\label{lemma.2.10}
If $M$ is any $R$-module with $H_*(M) \cong k$ there is an 
equivalence $M \simeq k$. 
\end{lemma}

\begin{proof}
Since the natural map $\uk \lra k$ is a homology isomorphism, 
it suffices to construct a homology isomorphism $\uk \lra M$.
Writing $K_i =K(x_1,...., x_i ) $ as before, so that $K_0=R$ and $K_r =\uk$, 
there is a cofibre sequence
$$\Sigma^{|x_i|}K_{i-1} \lra K_{i-1} \lra K_i$$
for $i=1,2, \ldots, n$. Furthermore, since $x_1, \dots, x_r$ 
is a regular sequence, $H_*(K_i)=H^*(BG)/(x_1,\ldots , x_i)$.
We now successively construct maps $K_i\lra M$ for $i=0,1,\ldots r$
inducing epimorphisms
$$H^*(K_i)=H^*(BG)/(x_1,\ldots , x_i)\lra H_*(M)=k$$
in homology. Indeed, we choose a non-zero element of homology
to give $K_0=R \lra M$, and
we may extend $K_{i-1}\lra M$ over $K_i$, since $x_if_{i-1}$ is zero in 
homology and therefore nullhomotopic.
\end{proof}

Accordingly, by \ref{prop-cell-qe}, the Quillen equivalence 
$$\Rtmod\simeq \Ramod$$
induces a Quillen equivalence
$$\ktcellRtmodp\simeq \kkcellRamodp.$$
Combining this with the Quillen equivalences of Subsection \ref{subsec:piequiv}
and Subsection \ref{subsec:itorsequiv} completes the proof.
\end{proof}

\section{Change of groups.}
\label{sec:change}
In this section we consider restriction of equivariance and its
adjoints. Suppose then that $G$ is a compact Lie group and
that  $H$ is a closed  subgroup, with inclusion 
$$i:H\lra G.$$ 
The restriction functor
$$i^*=\res^G_H: \Gspec \lra \Hspec $$
has a left adjoint and a right adjoint. The left adjoint is induction
$$i_*=\ind_H^G(Y)=G_+ \sm_H Y$$
and the right adjoint is coinduction
$$i_!=\coind_H^G(Y)=F_H(G_+ ,  Y).$$
Furthermore, these are related by an equivalence
$$F_H(G_+,Y)\simeq G_+\sm_H(S^{-L(H)}\sm Y)$$
where $L(H)$ is the representation of $H$ on the tangent space to
$G/H$ at the identity coset. This suspension already signals the
greater sophistication of the right adjoint. 
When we restrict attention to free spectra and $H$ is connected, 
suspension by $L(H)$ is equivalent to a $G$-fixed suspension of the
same dimension. 

Now consider the situation in which we have given algebraic models,
where $G$ and $H$ are connected. We will write $r: H^*(BG) \lra
H^*(BH)$ for the map induced by $i$ to avoid conflict with the use of 
$i^*$ above. 

It can be seen from variance alone that there will be
some interesting phenomena involved in finding counterparts in the
algebraic model. We begin by motivating the answer with an outline
discussion. 

Note that at the strict level, restriction 
$$r^*:  \modcat{H^*(BH)} \lra \modcat{H^*(BG)} $$
has left and right adjoints
$$r_*,r_!: \modcat{H^*(BG)} \lra \modcat{H^*(BH)} $$
defined by extension and coextension of scalars
$$r_*(M)=H^*(BH)\tensor_{H^*(BG)} M \mbox{ and }
r_!(M)=\Hom_{H^*(BG)} (H^*(BH), M).$$
The functor corresponding to $\res^G_H$ for spectra has the variance
of  $r_*$ and $r_!$, and it turns out to correspond to $r_!$. Thus
{\em induction} $i_*=\ind_H^G$ (which is the leftmost of the adjoints
on spectra) corresponds to {\em restriction} $r^*$ (which is the
middle adjoint in the algebraic world). The counterpart in algebra of 
the  right adjoint $i_!$ is a functor we have not yet mentioned, 
and it is only easily apparent at the derived level.  The existence 
of this counterpart of the right adjoint $i_!$ at the derived level
depends critically on the fact that $H^*(BH)$ is a  finitely generated 
$H^*(BG)$-module by Venkov's theorem. Moving to derived categories, 
since $H^*(BG)$ is a polynomial ring, the finite generation means that
 $H^*(BH)$ is small. Accordingly, if we write 
$$DM=\Hom_{H^*(BG)}(M,H^*(BG))$$
for the derived dual of a module, smallness means that the natural map 
$$r_!(N)=\Hom_{H^*(BG)}(H^*(BH), N)\stackrel{\simeq}
\lra D(H^*(BH))\tensor_{H^*(BG)} N$$
is an equivalence in the derived category.  It is then apparent that
the right adjoint of the derived functor $r_!$ is the functor 
$$r^!(M)=\Hom_{H^*(BG)}(D(H^*(BH)), M). $$

\begin{thm}
If $G$ and $H$ are connected compact Lie groups, and the inclusion
$i:H\lra G$ induces $r: H^*(BG) \lra H^*(BH)$ then at the level of
homotopy categories
\begin{itemize}
\item  induction of spectra corresponds to restriction of scalars along
$r$. 
\item restriction of
free $G$-spectra corresponds to coextension of scalars along $r$ 
\item  coinduction of spectra corresponds to the functor $r^!$
\end{itemize}

More precisely, for the left adjoint of restriction, we have Quillen adjunctions
$$\adjunction{\ind_H^G}{\freeHspec}{\freeGspec}{\res^G_H}$$
and 
$$\adjunction{r^*}{\torsHBHmod}{\torsHBGmod}
{r_!}$$
where 
$$r_!(M)=\Hom_{H^*(BG)}(H^*(BH),  M). $$
The resulting derived adjunctions correspond under the Quillen equivalence of Theorem
\ref{thm:culmination} in the sense that for each step in the  zig-zag of Quillen
equivalences of the domain and codomain,  there are 
adjunctions which correspond at the derived level.

For the right adjoints,  we have Quillen
adjunctions $$\adjunction{\res_H^G}{\freeGspec}{\freeHspec}{\coind^G_H}$$
and 
$$\adjunction {r_!'}{\torsHBGmod}{\torsHBHmod}
{r^!}, $$
where 
$$r'_!(M) =DH^*(BH)\tensor_{H^*(BG)} M \simeq   \Hom_{H^*(BG)}(H^*(BH),  M )   =:r_!$$
 and
$$r^!(N) =\Hom_{H^*(BH)}(DH^*(BH),  N ).$$
The associated derived adjunctions of these pairs also correspond at the derived level. 
\end{thm}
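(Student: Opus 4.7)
My plan is to proceed in three stages. First, I would verify the two claimed Quillen adjunctions at both ends of the zig-zag. On the topological side, induction-restriction-coinduction for orthogonal $G$-spectra are standard Quillen adjunctions, and they descend to the free categories because $\ind_H^G(\bS[H]) \simeq \bS[G]$ is a free $G$-cell, and $\res^G_H(\bS[G])$ is a wedge of free $H$-cells, so both cellularizations are preserved. On the algebraic side, Venkov's theorem implies that $r(\fm_G)^k \subseteq \fm_H$ for some $k$, so $r^*$ preserves $\fm$-torsion; combined with preservation of monomorphisms and homology isomorphisms this gives the Quillen adjunction $(r^*, r_!)$ in the torsion injective model structure of Subsection 8.4.

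Second, I would trace each adjunction through each of the six Quillen equivalences of Theorem \ref{thm:culmination}. At every stage, the relevant ring object carries a ring map induced by $i$: we have $\bS[H] \to \bS[G]$ of ring spectra at the Morita stage, $\Rtopp(G) \to \Rtopp(H)$ of commutative ring spectra (from the contravariance of $F(BG_+, \bS)$ in $BG$), $\Rt(G) \to \Rt(H)$ of commutative DGAs after algebraicization, and finally $r: H^*(BG) \to H^*(BH)$ by intrinsic formality. The change-of-groups triple at each stage is implemented by restriction, extension and coextension of scalars along the appropriate ring map, and these strictly intertwine the left Quillen functors at each step of the zig-zag. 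The apparent shift in variance between the topological triple $(\ind, \res, \coind)$ and the algebraic triple $(r^*, r_!, r^!)$ is explained by Koszul duality (Corollary \ref{cor.5.5}): extension of scalars along $\bS[H] \to \bS[G]$, i.e.\ induction, corresponds under the duality to restriction of scalars along $\Rtopp(G) \to \Rtopp(H)$.

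For the right-adjoint pair, the existence of $r^!$ depends crucially on Venkov's theorem. Since $H^*(BH)$ is finitely generated over the polynomial ring $H^*(BG)$, it is small (perfect) in the derived category, yielding the equivalence
\[ r_!(M) = \RHom_{H^*(BG)}(H^*(BH), M) \simeq DH^*(BH) \otimes^L_{H^*(BG)} M =: r'_!(M). \]
This exhibits the derived functor $r_!$ also as a left adjoint, with right adjoint $r^!(N) = \Hom_{H^*(BH)}(DH^*(BH), N)$. On the topological side, the Wirthm\"uller isomorphism reads $\coind_H^G(Y) \simeq G_+ \wedge_H (S^{-L(H)} \wedge Y) \simeq \Sigma^{-c} \ind_H^G(Y)$, using that $L(H)$ is orientable for connected $H$, with $c = \dim(G/H)$. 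Matching this with $DH^*(BH) \simeq \Sigma^{-c} H^*(BH)$ as $H^*(BG)$-modules (a consequence of Poincar\'e duality for $G/H$ combined with the orientability of $L(H)$) delivers the correspondence $\coind_H^G \leftrightarrow r^!$.

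The main obstacle is the coherent tracking of the change-of-groups adjunctions through the zig-zag: at each step one must check that the left Quillen functor intertwines extension and restriction of scalars along the corresponding ring map strictly enough that the derived adjunctions assemble properly. A secondary technical point is the precise shift identification $DH^*(BH) \simeq \Sigma^{-c} H^*(BH)$ as an $H^*(BG)$-module, which combines Venkov smallness with the orientability argument for $L(H)$ already used in Lemma \ref{piGegisI}.
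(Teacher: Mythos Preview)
Your overall strategy coincides with the paper's: trace the change-of-groups left adjoint through each of the eight model categories in the zig-zag, checking that the relevant squares of left Quillen (or derived) functors commute, then deduce the correspondence of right adjoints. Two points deserve comment.

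First, at the rigidity step (from $\Rt$-modules to $H^*(BG)$-modules) you write ``finally $r: H^*(BG) \to H^*(BH)$ by intrinsic formality'' as if the comparison is automatic. It is not: the formality maps $H^*(BG) \to \Rt(G)$ and $H^*(BH) \to \Rt(H)$ are constructed by choosing cocycle representatives of polynomial generators, and there is no reason the two independent choices fit into a commutative square with $r$ and $r_t$. The paper handles this by first arranging $r_t: \Rt(G) \to \Rt(H)$ to be a fibration of DGAs, choosing representatives $\tilde y_j$ for $H^*(BH)$, and then correcting provisional representatives $\tilde x_s'$ for $H^*(BG)$ by lifting the coboundaries $r_t(\tilde x_s') - X_s(\tilde y)$ along the fibration. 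This is a genuine, if small, obstruction you have not addressed.

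Second, for the right-adjoint pair you propose to match $\coind_H^G \simeq \Sigma^{-c}\,\ind_H^G$ (Wirthm\"uller) directly against a claimed identification $DH^*(BH) \simeq \Sigma^{-c} H^*(BH)$ of $H^*(BG)$-modules. The paper takes a shorter route that avoids this computation entirely: once the first half establishes that $\res^G_H$ corresponds at the derived level to $r_!$, and one checks $r_! \simeq r_!'$ (smallness from Venkov), the correspondence $\coind_H^G \leftrightarrow r^!$ follows immediately by uniqueness of right adjoints. Your approach would of course also yield the shift formula $r^! \simeq \Sigma^{-c} r^*$ mentioned in the introduction, but the Gorenstein-type identification $DH^*(BH) \simeq \Sigma^{-c} H^*(BH)$ over $H^*(BG)$ is not as immediate as ``Poincar\'e duality for $G/H$'' suggests and would need its own argument.
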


\begin{proof}
We are in the situation of having a number of model categories $\C
(G)$ associated to $G$, and corresponding model categories $\C (H)$
associated to $H$.   Below we show that certain diagrams 
$$\diagram 
\C (G) \rto^{\simeq} &\D (G)\\
\C (H) \uto^{i_{\C}} \rto^{\simeq} &\D (H)\uto^{i_{\D}  }
\enddiagram$$
commute, where the horizontals are Quillen equivalences and the
verticals are left adjoints.  
The relevant categories $\C (G)$ and functors $i:\C(H)\lra \C(G)$
are 
\begin{enumerate}
\item  $\C_1(G)= \freeGspec$, $i(Y)=G_+\sm_H Y$
\item $\C_2(G)= \modcat{\Q[G]}$,  $i(Y)=\Q [G]\tensor_{\Q[H]}Y$
\item $\C_3(G)= \kcellRtopqmod$,  $i(N)=(r_{top}')^*(N)$
\item $\C_4(G)= \kcellRtoppmod$,  $i(N)=r_{top}^*(N)$
\item $\C_5(G)= \kcellRtmod$,  $i(N)=r_{t}^*(N)$
\item $\C_6(G)= \kcellHBGmodp$,  $i(N)=r_a^*(N)$
\item $\C_7(G)= \kcellHBGmodi$,  $i(N)=r_a^*(N)$ 
\item $\C_8(G)= \torsHBGmod$,  $i(N)=r_a^*(N)$
\end{enumerate}

The commutativity is clear for the diagrams involved in the comparisons
1-2, 3-4, 6-7 and 7-8.   The algebraicization
step 4-5 also follows since the equivalences of \cite{s-alg} are
natural. This leaves 2-3 and 5-6. 

For the rigidity step 5-6, we need to take care about the order of 
choices. Indeed, we choose $r_t=i^*_t:R_t(G)\lra R_t(H)$ to be a fibration 
of DGAs. Now consider the square
$$\diagram 
H^*(BG) \rdotted \dto &R_t(G)\dto\\ 
H^*(BH) \rto &R_t(H)
\enddiagram$$ 
We obtain the lower horizontal by choosing cycle representatives
$\yt_1, \ldots , \yt_s$ for the polynomial generators of 
$H^*(BH)=\Q [y_1,\ldots, y_s]$. Now if $H^*(BG)=\Q[x_1,
\ldots, x_r]$ and we choose cycle representatives $\xt_1', \ldots ,
\xt_r'$ for the polynomial generators, we will not obtain a commutative 
square. However if $i^*(x_s)=X_s(y)$, the classes
$i^*(\xt_s')$ and $X_s(\yt)$ are cohomologous, and therefore the   
differences $i^*(\xt_s')-X_s(\yt)$ are coboundaries. Since the map
$R_t(G)\lra R_t(H)$ is a fibration, we may lift this coboundary to a 
coboundary $d(e_i)$ in $R_t(H)$. Now take $\xt_s=\xt_s'-e_s$ and we 
complete the square to a commutative square of DGAs. 

Finally, the most interesting step is the Koszul step 2-3. The map
 $i: \Q [H]\lra \Q[G]$ induces the restriction map
$$r'_{top}=i^*_{top}: R'_{top}(G)=F_{\Q[G]}(k,k) \lra F_{\Q[H]}(k,k)
=R'_{top}(H) $$
where here $k$ is $\Q [EG]$, the $G$-cofibrant replacement of $\Q$ which is 
also an $H$-cofibrant replacement, since we may choose $EG$ as our model for
$EH$.  
We must show the diagram 
$$ 
\diagram 
\modcat{\Q[G]}\rto^{\hspace*{-4ex}k \otimes_{\Q[G]} (\cdot)}  & \kcellRtopqGmod\\
\modcat{\Q[H]}\rto^{\hspace*{-4ex}k \otimes_{\Q[H]} (\cdot)}\uto^{i_*}  &
\kcellRtopqHmod \uto_{r^*}
\enddiagram$$
commutes. Starting with a $\Q [H]$-module $Y$ we see that this amounts
to the isomoprhism
$$k \tensor_{\Q[G]} (Q[G] \tensor_{\Q[H]} Y )  \cong  k \tensor_{\Q[H]} Y.$$

The left adjoints for the categories (i), (ii), (vii) and (viii) are left Quillen functors and hence 
induce associated derived functors.   For the standard model structures
in (iii) through (vi) the functors $r^*$ preserve all weak equivalences, so they also induce
derived functors.      This shows that the derived functors of the left adjoints for the categories
in (i) and in (viii) correspond.   Thus their derived right adjoints also correspond.

Now consider the second Quillen pairs listed in the statement of
the theorem.   The adjunction described at the level of spectra 
(i.e., $\C_1$) is a Quillen pair by~\cite[V.2.2]{mm}.  
In algebra (i.e., $\C_{8}$), $r'_{!}$ preserves homology isomorphisms
and injections, hence it is a left Quillen functor.
As shown above the statement of the theorem, at the derived level
$r_!$ is naturally isomorphic to $r_!'$. 
Since the first statement in the theorem shows that at the derived level $\res^{G}_{H}$
corresponds to $r_{!}$, it follows that the second derived adjunctions also correspond.
\end{proof}

\begin{appendix}
\section{Cellularization of Model Categories}\label{sec-cell}

Throughout the paper we need to consider models for categories of
cellular objects, thought of as constructed from a basic cell using
coproducts and cofibre sequences. These models are usually obtained
by the process of cellularization (or colocalization) of model categories, with the
cellular objects appearing as the cofibrant objects. Because it is is 
fundamental to our work, we recall some of the basic definitions from~\cite{hh}.

\begin{defn}\cite[3.1.8]{hh}
Let $\M$ be a model category and $A$ be an object in $\M$.
A map $f: X \to Y$ is an {\em $A$-cellular equivalence}
if the induced map of homotopy function complexes~\cite[17.4.2]{hh} 
$f_*: \map(A, X) \to \map(A, Y)$ is a weak equivalence. 
An object $W$ is {\em $A$-cellular} if $W$ is cofibrant in $\M$ and 
$f_*: \map(W, X) \to \map(W, Y)$ is a weak equivalence for
any $A$-cellular equivalence $f$.
\end{defn}

\begin{prop}\cite[5.1.1]{hh}\label{prop 5.5}
Let $\M$ be a right proper  model category which is cellular in the
sense of \cite[12.1.1]{hh} 
and let $A$ be an object in $\M$.  The $A$-cellularized
model category $\AcellM$ exists and has
weak equivalences the $A$-cellular equivalences,
fibrations the fibrations in $\M$ and cofibrations
the maps with the left lifting property with respect
to the trivial fibrations. 
The cofibrant objects are the $A$-cellular objects.
\end{prop}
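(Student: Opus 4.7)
The plan is to follow Hirschhorn's general construction of right Bousfield localizations, of which cellularization is a special case. The specification of the model structure is almost entirely forced: since the fibrations are declared to be those of $\M$, and the class of $A$-cellular equivalences contains every weak equivalence of $\M$, the cofibrations must be the maps with the left lifting property against trivial fibrations, and the trivial cofibrations are determined by the LLP against fibrations. All the substance lies in verifying the two factorization axioms and exhibiting the cofibrant objects.

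First I would dispose of the easy half. Since the fibrations are unchanged and every $\M$-weak equivalence is automatically an $A$-cellular equivalence, the set $J$ of generating trivial cofibrations for $\M$ already generates a class of trivial cofibrations in $\AcellM$, and the small object argument applied to $J$ furnishes a functorial factorization as a trivial cofibration followed by a fibration. The $2$-of-$3$ property, retract closure, and the lifting axioms then follow routinely, and right properness is inherited from $\M$ once the second factorization is in place.

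The substantive step is the (cofibration, trivial fibration) factorization, which amounts to producing a functorial $A$-cellular replacement: given a map $X \to Y$, a factorization $X \to W \to Y$ with $W$ built from $X$ by attaching cells modelled on $A$, and $W \to Y$ an $\M$-trivial fibration (hence a fortiori an $A$-cellular equivalence). The construction proceeds by introducing a set of ``$A$-horns'' that witness the failure of $A$-cellularity and then building $W$ by a transfinite small object argument attaching such horns. A Bousfield--Smith cardinality argument shows that only a set of horns is required, and that the resulting relative cell complexes are precisely the $A$-cellular cofibrations; once this is set up, a routine induction along cell attachments identifies the cofibrant objects of $\AcellM$ with the $A$-cellular objects.

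The main obstacle is the cardinality/smallness step, and this is exactly where the cellularity hypothesis on $\M$ is used in an essential way: cellularity provides enough compactness among cofibrant objects to bound the size of the test maps needed to detect $A$-cellular equivalences, so that the collection of $A$-horns forms an honest set and the small object argument terminates at a small ordinal. Without this hypothesis the class of $A$-cellular cofibrations need not be generated by a set, and the construction would fail at the factorization axiom.
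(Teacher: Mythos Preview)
The paper does not supply a proof of this proposition; it is simply quoted from Hirschhorn \cite[5.1.1]{hh}. So there is no ``paper's own proof'' to compare against, and what you have written is in effect a sketch of Hirschhorn's argument. In outline your description is reasonable, but there is one concrete error and one mild misattribution worth flagging.

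The error is in your description of the (cofibration, trivial fibration) factorization. You write that one factors $X\to Y$ as $X\to W\to Y$ with $W$ built from $X$ by $A$-cells and $W\to Y$ an \emph{$\M$-trivial fibration}. That is not what happens, and indeed cannot happen in general: if $W\to Y$ were an $\M$-trivial fibration then $X\to W$ would merely be an $\M$-cofibration and you would have reproduced the old factorization. The small object argument on the set $\Lambda(A)$ of $A$-horns produces $W\to Y$ with the right lifting property against $\Lambda(A)$; Hirschhorn then shows (using right properness) that such maps are exactly the fibrations which are $A$-cellular equivalences, i.e.\ the \emph{new} trivial fibrations. These are typically not $\M$-weak equivalences.

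The misattribution concerns the role of cellularity. For right Bousfield localization the horns $\Lambda(A)$ already form a set---one horn for each simplicial dimension and each cosimplicial frame of $A$---so no Bousfield--Smith cardinality argument is needed to cut them down (that device belongs to \emph{left} Bousfield localization). Cellularity is instead used to guarantee that the domains of these horns are small relative to the relevant class of cofibrations, so that the small object argument runs, and that relative $\Lambda(A)$-cell complexes are cofibrations in $\M$.
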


It is useful to have the following further characterization of the
cofibrant objects.   

\begin{prop}\cite[5.1.5]{hh}\label{prop-cell-obj}
If $A$ is cofibrant in $\M$, then the class of $A$-cellular objects 
agrees with the smallest class of cofibrant ojects in $\M$ that
contains $A$ and is closed under homotopy colimits and weak
equivalences.  
\end{prop}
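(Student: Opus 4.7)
The plan is to prove the equality $\cC = \cF$ by double inclusion, where $\cC$ denotes the class of $A$-cellular objects and $\cF$ denotes the smallest class of cofibrant objects of $\M$ that contains $A$ and is closed under homotopy colimits and weak equivalences.

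For the inclusion $\cF \subseteq \cC$, I would verify that $\cC$ satisfies the three defining closure properties of $\cF$. First, $A \in \cC$ because for any $A$-cellular equivalence $f$ the map $\map(A,f)$ is by definition a weak equivalence. Next, $\cC$ is closed under weak equivalences of cofibrant objects: homotopy invariance of the homotopy function complex gives a natural weak equivalence $\map(W',-) \simeq \map(W,-)$ whenever $W \simeq W'$ are cofibrant, so closure is automatic. Finally, $\cC$ is closed under homotopy colimits, via the standard identification $\map(\hocolim_i W_i, X) \simeq \holim_i \map(W_i, X)$ combined with the fact that homotopy limits preserve objectwise weak equivalences.

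For the reverse inclusion $\cC \subseteq \cF$, let $W \in \cC$. Using the cellular hypothesis on $\M$, I would apply the small object argument inside $\AcellM$ to obtain a cofibrant replacement $q \colon \tilde W \to W$ in the cellularized structure. By Hirschhorn's explicit construction, $\tilde W$ is built as a transfinite composition of pushouts of maps of the form $A \tensor \partial \Delta^n_+ \hookrightarrow A \tensor \Delta^n_+$; each such pushout is a homotopy pushout because the attaching map is a cofibration in $\M$, and the transfinite composition along cofibrations is a homotopy colimit. Consequently $\tilde W$ is an iterated homotopy colimit starting from $A$, and hence $\tilde W \in \cF$.

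It remains to transfer membership from $\tilde W$ to $W$. The map $q$ is a trivial fibration in $\AcellM$ and $W$ is cofibrant there (being $A$-cellular), so the identity of $W$ lifts to a section $s \colon W \to \tilde W$, realizing $W$ as a retract of $\tilde W$ in $\M$. The idempotent $sq \colon \tilde W \to \tilde W$ can then be split by passing to the sequential homotopy colimit of $\tilde W \stackrel{sq}\lra \tilde W \stackrel{sq}\lra \cdots$, which is weakly equivalent to $W$; this exhibits $W$ as a homotopy colimit of objects already in $\cF$, completing the argument. The main obstacle is precisely this retract step: one must realize a retract of an object in $\cF$ again inside $\cF$, and this is why the telescope presentation of the idempotent splitting is essential. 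An alternative route would be to argue that, in the cellular set-up, the $A$-cellular equivalence $q$ between objects that are cofibrant in both $\M$ and $\AcellM$ is in fact a weak equivalence of $\M$, which would permit one to conclude by closure under weak equivalences alone.
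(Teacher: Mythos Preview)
The paper does not supply its own proof of this proposition; it is simply quoted from Hirschhorn \cite[5.1.5]{hh} with no argument given. So there is nothing in the paper to compare your proposal against directly.

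That said, your outline is sound and is essentially the standard argument. The inclusion $\cF \subseteq \cC$ is handled correctly. For $\cC \subseteq \cF$, your main route via the retract-and-telescope trick works, but it is more elaborate than necessary. The ``alternative route'' you mention at the end is in fact the clean argument and is how Hirschhorn proceeds: once you know $\tilde W$ and $W$ are both $A$-cellular and $q:\tilde W\to W$ is an $A$-cellular equivalence, a Yoneda-style argument (apply $\map(W,-)$ and $\map(\tilde W,-)$ to $q$ to produce a two-sided homotopy inverse) shows $q$ is already a weak equivalence in $\M$, so closure of $\cF$ under weak equivalences finishes immediately. This bypasses the retract issue entirely. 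One small caution on your main route: the exact form of the generating cofibrations for $\AcellM$ depends on a choice of cosimplicial frame on $A$, so the description ``$A\tensor\partial\Delta^n_+\to A\tensor\Delta^n_+$'' should be read as shorthand for Hirschhorn's set $\Lambda\{A\}$ of horns on $A$ rather than taken literally.
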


Since we are always working with stable
model categories here, homotopy classes of maps out of $A$ detects
trivial objects.   That is, in $\Ho(\AcellM)$, an object $X$
is trivial if and only if $[A, X]_{*}=0.$  In this case,
by \cite[2.2.1]{ss2} we have the following.

\begin{prop}\label{prop-gen}
If $\M$ is stable and $A$ is compact, then 
$A$ is a generator of $\Ho(\AcellM)$.   That is,
the only localizing subcategory containing $A$
is $\Ho(\AcellM)$ itself.  
\end{prop}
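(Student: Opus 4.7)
The plan is to reduce to \cite[2.2.1]{ss2}, which asserts that in a stable model category a compact object whose graded maps detect triviality generates the homotopy category as a localizing subcategory. So the task is to verify those two hypotheses for $A$ in $\Ho(\AcellM)$: compactness and detection of zero objects.

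For the detection property, I would use the definition of weak equivalence in $\AcellM$ directly. By Proposition~\ref{prop 5.5}, a map $f$ is a weak equivalence precisely when $f_*\colon \map(A,X)\to \map(A,Y)$ is a weak equivalence. Taking $f\colon 0\to X$ and passing to components in all degrees (using stability to identify $\pi_n\map(A,X)\cong [A,X]_n$ in $\Ho(\M)$, and hence in $\Ho(\AcellM)$ since fibrations and fibrant objects are unchanged by cellularization), we see that $X\simeq 0$ in $\Ho(\AcellM)$ if and only if $[A,X]_*=0$. This is exactly the ``generator detects trivial objects'' condition required by \cite[2.2.1]{ss2}.

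For compactness, the point is that coproducts in $\Ho(\AcellM)$ agree with coproducts in $\Ho(\M)$ on cofibrant objects: a coproduct of cofibrant objects in $\M$ is still cofibrant and, by Proposition~\ref{prop-cell-obj}, remains $A$-cellular whenever the summands are; moreover the identity functor $\AcellM\to \M$ is a left Quillen functor, so it preserves homotopy colimits. Since $A$ is compact in $\Ho(\M)$ by hypothesis and the graded hom groups $[A,-]_*$ in $\Ho(\AcellM)$ coincide with those computed in $\Ho(\M)$ (again because fibrant replacement is unchanged), compactness transfers to $\Ho(\AcellM)$.

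With both hypotheses verified, \cite[2.2.1]{ss2} applies and yields that $A$ generates $\Ho(\AcellM)$ as a localizing subcategory, which is the statement of the proposition. The only point that requires any care is the compatibility of mapping spaces and coproducts under cellularization; once one observes that the right half of the model structure is preserved, everything else is formal.
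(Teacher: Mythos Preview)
Your proposal is correct and follows essentially the same route as the paper: the paper's entire argument is the sentence preceding the proposition, which observes that $[A,X]_*=0$ forces $X\simeq 0$ in $\Ho(\AcellM)$ and then invokes \cite[2.2.1]{ss2}. You supply more detail than the paper does, in particular the verification that compactness of $A$ transfers from $\Ho(\M)$ to $\Ho(\AcellM)$, which the paper leaves implicit in the hypothesis ``$A$ is compact''.
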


We next need to show that appropriate cellularizations of these
model categories preserve Quillen equivalences.

\begin{prop}\label{prop-cell-qe}
Let $\M$ and $\N$ be right proper cellular model
categories with $F: \M \to \N$ a Quillen 
equivalence with right adjoint $U$.  
\begin{enumerate}
\item Let $A$ be an object in $\M$, then $F$ and $U$
induce a Quillen equivalence between the $A$-cellularization
of $\M$ and the $FQA$-cellularization of $\N$ where
$Q$ is a cofibrant replacement functor in $\M$. 
\[ \AcellM \simeq_{Q} \mbox{$FQA$-cell-$\N$}\]
\item Let $B$ be an object in $\N$, then $F$ and $U$
induce a Quillen equivalence between the $B$-cellularization
of $\N$ and the $URB$-cellularization of $\M$ where
$R$ is a fibrant replacement functor in $\N$. 
\[ \mbox{$URB$-cell-$\M$} \simeq_{Q} \mbox{$B$-cell-$\N$}\]
\end{enumerate} 
\end{prop}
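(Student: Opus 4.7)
The plan is to handle both parts symmetrically by first establishing that $(F,U)$ descends to a Quillen pair between the cellularized model categories, and then verifying the Quillen equivalence criterion by combining the derived adjunction of mapping spaces with the hypothesis that $(F,U)$ is already a Quillen equivalence. I will focus on part (1); part (2) is formally dual, using fibrant replacement in $\N$ in place of cofibrant replacement in $\M$.

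First, I would verify that $(F,U)$ remains a Quillen pair between $\AcellM$ and $\mbox{$FQA$-cell-$\N$}$. By Proposition~\ref{prop 5.5} the fibrations in the cellularized structures coincide with those in $\M$ and $\N$, so $U$ automatically preserves fibrations. For trivial fibrations, suppose $p : X \to Y$ is a fibration between fibrant objects in $\N$ that is an $FQA$-cellular equivalence. Then $Up$ is a fibration between fibrant objects in $\M$, and the derived adjunction
\[
\map_{\M}(A,UX) \simeq \map_{\N}(FQA,X),
\]
valid because $QA$ is cofibrant in $\M$ and $X$ is fibrant in $\N$, identifies the map induced by $Up$ on $\map_{\M}(A,-)$ with the map induced by $p$ on $\map_{\N}(FQA,-)$. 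Hence $Up$ is an $A$-cellular equivalence. Ken Brown's lemma then extends this to all trivial fibrations, giving the Quillen pair.

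Next I would check the Quillen equivalence criterion. Let $X$ be cofibrant in $\AcellM$, so by Proposition~\ref{prop-cell-obj} it is $A$-cellular and in particular cofibrant in $\M$, and let $Y$ be fibrant in $\mbox{$FQA$-cell-$\N$}$, hence fibrant in $\N$. A map $FX \to Y$ is a weak equivalence in $\mbox{$FQA$-cell-$\N$}$ iff it induces a weak equivalence on $\map_{\N}(FQA,-)$; by the original Quillen equivalence together with the adjunction isomorphism above, this is equivalent to the adjunct $X \to UY$ inducing a weak equivalence on $\map_{\M}(A,-)$, i.e., to its being an $A$-cellular equivalence. This delivers the Quillen equivalence.

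The only real obstacle is the cofibrancy/fibrancy bookkeeping needed to apply the derived adjunction of mapping spaces: this is precisely why $A$ must be cofibrantly replaced to $QA$ in the definition of the cellularization on $\N$, and dually why $B$ must be fibrantly replaced to $RB$ in part (2). Once this bookkeeping is in place, the argument is a direct translation between the definitions of $A$-cellular and $FQA$-cellular equivalence along the hypothesized Quillen equivalence.
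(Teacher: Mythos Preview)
Your strategy is essentially the paper's: both rest on the derived adjunction of homotopy function complexes (what the paper cites as \cite[17.4.15]{hh}) to translate $FQA$-cellular equivalences into $A$-cellular equivalences. The paper organizes the verification slightly differently, invoking the colocalization criterion \cite[3.3.18(2)]{hh} for the Quillen-pair step and then the criterion of \cite[1.3.16]{hovey-model} (the derived unit is an equivalence on cofibrant objects, and $U$ reflects weak equivalences between fibrant objects) for the Quillen-equivalence step, rather than checking the adjunction condition on $FX\to Y$ versus $X\to UY$ directly as you do. Your direct check of the Quillen equivalence is fine and amounts to the same thing.

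One point needs tightening. Your appeal to ``Ken Brown's lemma'' to extend the trivial-fibration check from fibrant objects to arbitrary ones is not the content of that lemma and does not follow from it: for a fibration $p:X\to Y$ in $\N$ that is an $FQA$-cellular equivalence with $X,Y$ not fibrant, the map $Up$ need not be an $A$-cellular equivalence, because $UX$ need not have the weak homotopy type of $U(RX)$, and it is the latter that the derived adjunction of mapping spaces controls. The paper avoids this by citing Hirschhorn's criterion directly. If you want a hands-on fix, argue on the left adjoint instead: $F$ already preserves trivial cofibrations (these are unchanged by cellularization), and $F$ carries a cosimplicial frame on $QA$ to one on $FQA$, hence sends the generating cofibrations of $A\text{-cell-}\M$ to cofibrations of $FQA\text{-cell-}\N$.
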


In~\cite[2.3]{hovey-stable} Hovey gives criteria for when
localizations preserve Quillen equivalences.  Since
cellularization is dual to localization, this proposition
follows from the dual of Hovey's statement.   

\begin{proof}
First note that $FQURB \to B$ is a weak equivalence
since $F$ and $U$ form a Quillen equivalence.
The criterion in~\cite[3.3.18(2)]{hh} (see also~\cite[2.2]{hovey-stable}) 
for showing that $F$ and $U$ 
induce a Quillen adjoint pair on the cellular model categories
follows in (i) automatically and in (ii) since $FQURB$ is weakly equivalent to $B$.
Similarly, in each case the choice of the cellularization objects establishes 
the criterion for Quillen equivalences in~\cite[2.3]{hovey-stable}.  

In more detail, dual to the proof of~\cite[2.3]{hovey-stable},
we note that for every cofibrant object $X$ in 
$\N$, the map $X \to URFX$ is a weak equivalence. Since
fibrant replacement does not change upon cellularization
and cellular cofibrant objects are also cofibrant in $\N$, 
we apply~\cite[1.3.16]{hovey-model} to the Quillen adjunctions
of the cellular model categories.  Thus,
in each case we only need to show that $U$ reflects weak equivalences
between fibrant objects. 
In case (i), we are given $f: X \to Y$ a weak
equivalence between fibrant objects such that $Uf$ is an $A$-cellular
equivalence in $\M$.  Thus $Uf_*: \map(A, UX) \to \map(A, UY)$       
is an equivalence.  By~\cite[17.4.15]{hh}, it follows that
$f_*: \map(FQA, X) \to \map(FQA, Y)$ is also a weak equivalence.
Thus $f$ is a $FQA$-cellular equivalence as required.       
In case (ii), we are given $f: X \to Y$ a weak
equivalence between fibrant objects such that $Uf$ is an $URB$-cellular
equivalence in $\M$.  Thus $Uf_*: \map(URB, UX) \to \map(URB, UY)$       
is an equivalence.  By~\cite[17.4.15]{hh}, it follows that
$f_*: \map(FQURB, X) \to \map(FQURB, Y)$ is also a weak equivalence.
Since $FQURB \to RB$ is a weak equivalence, we see that $f_*$
is a $B$-cellular equivalence as required.  
\end{proof}

\end{appendix}


\begin{thebibliography}{99}
\bibitem{Barnes1} D.Barnes {\em Rational Equivariant Spectra}
Thesis (2008), University of Sheffield, arXiv:0802.0954 
\bibitem{Barnes2} D.Barnes {\em Classifying Dihedral O(2)-Equivariant Spectra}
 Preprint (2008) arXiv:0804.3357 
\bibitem{BG} A.~K.~Bousfield and V.~K.~A.~M.~Gugenheim {\em On ${\rm PL}$ de Rham theory and rational homotopy type}  Mem. Amer. Math. Soc.  {\bf 8}  (1976), no. 179, ix+94 pp.
\bibitem{du-sp} D.~Dugger {\em Spectral enrichments of model categories}
Homology, Homotopy and Applications {\bf 8} (2006), No. 1, 1-30.
\bibitem{ds-ad} D.~Dugger and B.~Shipley {\em Enriched model categories and an application to additive endomorphism spectra} Theory and app. of categories
{\bf 18} (2007) 400--439. 
\bibitem{tec} W.G.Dwyer and J.P.C.Greenlees
     {\em Complete modules and torsion modules}
     American Journal of Mathematics {\bf 124} (2002) 199-220.
\bibitem{DGI} W.G.Dwyer, J.P.C.Greenlees and S.B.Iyengar
      {\em Duality in algebra and topology} 
      Advances in Maths {\bf 200} (2006) 357--402.
\bibitem{Dwyer-Spalinski}
W.~G.~Dwyer and J.~Spali\'nski Homotopy theories and model categories.  {\em Handbook of algebraic topology},  73--126, North-Holland, Amsterdam, 1995.
\bibitem{em} A.~D. Elmendorf and J.~P. May.
   ``Algebras over equivariant sphere spectra.''
      Special volume on the occasion of the 60th birthday of Professor
      Peter J. Freyd. 
     J. Pure Appl. Algebra 116 (1997), no. 1-3, 139--149.
\bibitem{ekmm} A.~D. Elmendorf, I.~Kriz, M.~A. Mandell, and J.~P. May.
   \newblock {\em Rings, Modules and Algebras in Stable Homotopy Theory}, 
   Volume~47 of {\em Amer. Math. Soc. Surveys and Monographs}.
   \newblock American Mathematical Society, 1996.
\bibitem{Freyd} P.~Freyd, {\em Abelian categories}, Harper and Row, New York,
1964. 
\bibitem{Gabriel} P. Gabriel {\em Des cat\'egories ab\'eliennes}
Bull. Soc. Math. France {\bf 90} (1962) 323-448
\bibitem{o2q} J.P.C.Greenlees 
  ``Rational $O(2)$-equivariant cohomology theories.'' 
  Fields Institute Communications {\bf 19} (1998) 103-110
\bibitem{s1q} J.P.C.Greenlees 
     {\em Rational $S^1$-equivariant stable homotopy theory}
     Mem. American Math. Soc. {\bf 661} (1999) xii + 289 pp
\bibitem{tnq1} J.P.C.Greenlees 
     ``Rational torus-equivariant cohomology theories I: 
      calculating groups of stable maps.''
      JPAA {\bf 212} (2008) 72-98 
\bibitem{Tate}
 J.P.C.Greenlees and J.P. May
  ``Generalized  Tate cohomology'' 
  Memoirs of the American Maths. Soc.,
  {\bf 543} (1995) 178pp.
\bibitem{tnq3} J.P.C.Greenlees  and B.E.Shipley
     {\em An algebraic model for rational torus-equivariant 
       spectra}, in preparation, 2009.
\bibitem{hh}
P.~S.~Hirschhorn
{\em Model categories and their localizations}
 Mathematical Surveys and Monographs, {\bf 99}, American Mathematical Society, 
 Providence, RI, 2003. xvi+457 pp.
\bibitem{hovey-model}
     M.~Hovey, 
    {\em Model categories}, Mathematical Surveys and Monographs,
    {\bf 63}, American Mathematical Society,
    Providence, RI, 1999, xii+209 pp.
\bibitem{hovey-stable}
     M.~Hovey, {\em Spectra and symmetric spectra in general model categories}  J. Pure Appl. Algebra  {\bf 165}  (2001),  no. 1, 63--127.
\bibitem{hss}
     M.~Hovey, B.~Shipley and J.~Smith,
{\em Symmetric spectra}
         J. Amer. Math. Soc. {\bf 13} (2000), 149-209.
         \bibitem{nj}
      N.~Johnson, {\em Morita theory for derived categories: a bicategorical perspective}, preprint (2008)  arXiv:0805.3673v2.

\bibitem{kelly}
      G.~M.~Kelly, {\em Basic concepts of enriched category theory},
      Cambridge Univ. Press, Cambridge, 1982, 245 pp.
\bibitem{KM} I.Kriz and J.P.May
      ``Operads, algebras, modules and motives.''
        Astérisque No. 233 (1995), iv+145pp. 
\bibitem{mm}
     M.~Mandell and J.~P.~May, 
     {\em Equivariant Orthogonal Spectra and S-Modules},
     Mem. Amer. Math. Soc. {\bf 159} (2002), no. 755, x+108 pp.
\bibitem{mmss}
     M.~Mandell, J.~P.~May, S.~Schwede and B.~Shipley
{\em Model categories of diagram spectra}
          Proc. London Math. Soc. {\bf 82} (2001), 441-512.
\bibitem{Q}
      D.~G. Quillen, {\em Homotopical algebra}, Lecture Notes in Mathematics,
     {\bf 43}, Springer-Verlag, 1967.
\bibitem{schwede}
    S.~Schwede 
   {\em $\bS$-modules and symmetric spectra}, 
   Math. Ann. {\bf 319} (2001), no. 3, 517--532.
\bibitem{ss1}
    S.~Schwede and B.~Shipley, 
    {\em Algebras and modules in monoidal model categories}, 
    Proc.\ London Math.\ Soc.\ {\bf 80} (2000), 491-511.
\bibitem{ss2}
    S.~Schwede and B.~Shipley, 
   {\em Stable model categories are categories of modules},
   Topology {\bf 42} (2003), no. 1, 103--153.
\bibitem{ss-mon}  
    S.~Schwede and B.~Shipley, 
    {\em Equivalences of monoidal model categories},
     Algebr. Geom. Topol. {\bf 3} (2003), 287--334.
\bibitem{shi-mon-unique}
B.~Shipley, {\em Monoidal uniqueness of stable homotopy theory},
Adv. Math. {\bf 160} (2001), 217--240.
\bibitem{qs1q}
     B.~Shipley
   {\em An algebraic model for rational $S^1$-equivariant stable homotopy 
    theory}, Quart. J. Math. {\bf 53} (2002), 87--110.
\bibitem{s-alg} B.~Shipley 
    {\em H${\mathbb Z}$-algebra spectra are differential graded algebras},
Am. J. Math. {\bf 129} (2007) 351-379
\bibitem{shi-con} B.~Shipley 
        {\em A convenient model category for commutative ring spectra}, 
Contemp. Math. {\bf 346}  (2004), 473--483. 
\end{thebibliography}
\end{document}